\newtheorem{theorem}{Theorem}[section]
\newtheorem{lemma}[theorem]{Lemma}
\newtheorem{remark}[theorem]{Remark}
\newtheorem{prop}[theorem]{Proposition}
\newtheorem{corollary}[theorem]{Corollary}
\numberwithin{equation}{section}
\newcommand{\R}{{\mathbb R}}
\newcommand{\C}{{\mathbb C}}
\newcommand{\N}{{\mathbb N}}
\newcommand{\cL}{{\mathcal L}}
\newcommand{\cE}{{\mathcal E}}
\newcommand{\cB}{{\mathcal B}}
\newcommand{\cS}{{\mathcal S}}
\newcommand{\cU}{{\mathcal U}}
\newcommand{\cV}{{\mathcal V}}
\newcommand{\be}{\beta}
\newcommand{\si}{\sigma}
\newcommand{\su}{\subseteq}
\newcommand{\ov}{\overline}
\newcommand\ind{\mathop{\rm ind\,}}
\newcommand\Ker{\mathop{\rm Ker}}
\newcommand{à}{\`a}
\begin{document}

\title{Spectral properties of generalized Ces\`aro operators in  sequence spaces}

\author{Angela\,A. Albanese, Jos\'e Bonet* and Werner\,J. Ricker}

\thanks{\textit{Mathematics Subject Classification 2020:}
Primary 46A45, 47B37; Secondary 46A04, 46A13, 47A10, 47A16, 47A35.}
\keywords{Generalized Cesàro operator, Compactness, Spectra, Power boundedness, Uniform Mean Ergodicity, Sequence space, Fr\'echet space, (LB)-space}

\address{ Angela A. Albanese\\
Dipartimento di Matematica ``E.De Giorgi''\\
Universit\`a del Salento- C.P.193\\
I-73100 Lecce, Italy}
\email{angela.albanese@unisalento.it}

\address{Jos\'e Bonet \\
Instituto Universitario de Matem\'{a}tica Pura y Aplicada
IUMPA \\
Edificio IDI5 (8E), Cubo F, Cuarta Planta \\
Universidad Politécnica de Valencia \\
E-46071 Valencia, Spain} \email{jbonet@mat.upv.es}

\address{Werner J.  Ricker \\
Math.-Geogr. Fakultät \\
 Katholische Universität
Eichst\"att-Ingol\-stadt \\
D-85072 Eichst\"att, Germany}
\email{werner.ricker@ku.de}

\begin{abstract}
The generalized Cesàro operators $C_t$, for $t\in [0,1]$, were first investigated in the 1980's. They act continuously in many classical Banach sequence spaces contained in $\C^{\N_0}$, such as $\ell^p$, $c_0$, $c$, $bv_0$, $bv$ and, as recently shown, \cite{CR4}, also in the discrete Cesàro spaces $ces(p)$ and their (isomorphic) dual spaces $d_p$. In most cases $C_t$ ($t\not=1$) is compact and its spectra and point spectrum, together with the corresponding eigenspaces, are known. We study these properties of $C_t$, as well as their linear dynamics and mean ergodicity, when they act in certain non-normable sequence spaces contained in $\C^{\N_0}$. Besides $\C^{\N_0}$ itself, the Fr\'echet spaces considered are $\ell(p+)$, $ces(p+)$ and $d(p+)$, for $1\leq p<\infty$, as well as the (LB)-spaces $\ell(p-)$, $ces(p-)$ and $d(p-)$, for $1<p\leq\infty$.
\end{abstract}
\maketitle

\section{Introduction}

The (discrete) generalized Cesàro operators $C_t$, for $t\in [0,1]$, were first investigated by Rhaly, \cite{R1}. The action of $C_t$  from $\omega:=\C^{\N_0}$ into itself (with $\N_0:=\{0,1,2,\ldots\}$) is given by
\begin{equation}\label{Ces-op}
	C_tx:=\left(\frac{t^nx_0+t^{n-1}x_1+\ldots +x_n}{n+1}\right)_{n\in\N_0},\quad x=(x_n)_{n\in\N_0}\in\omega.
	\end{equation}
For $t=0$ note that $C_0$ is the diagonal operator
\begin{equation}\label{Dia-op}
	D_\varphi x:= \left(\frac{x_n}{n+1}\right)_{n\in\N_0}, \quad x=(x_n)_{n\in\N_0}\in\omega,
	\end{equation}
where $\varphi:=\left(\frac{1}{n+1}\right)_{n\in\N_0}$, and for $t=1$ that $C_1$ is the classical Cesàro averaging operator
\begin{equation}\label{Ces-1}
	C_1x:=\left(\frac{x_0+x_1+\ldots+x_n}{n+1}\right),\quad x=(x_n)_{n\in\N_0}\in\omega.
\end{equation}

The spectra of $C_1$ have been investigated in various Banach sequence spaces. For instance, we mention $\ell^p$ ($1<p<\infty$), \cite{BHS,CR1,G,Le}, $c_0$ \cite{Ak,Le,Re}, $c$ \cite{Le}, $\ell^\infty$ \cite{P,Re}, the Bachelis spaces $N^p$ ($1<p<\infty$) \cite{CR2}, $bv$ and $bv_0$ \cite{O1,O2}, weighted $\ell^p$ spaces \cite{ABR1,ABR2}, the discrete Cesàro spaces $ces(p)$ (for $p\in\{0\}\cup (1,\infty)$), \cite{CR3}, and their dual spaces $d_s$ ($1<s<\infty$), \cite{BR1}. For the class of generalized Cesàro operators $C_t$, for $t\in (0,1)$, a study of their spectra and compactness properties (in $\ell^2$) go back to Rhaly, \cite{R1,R2}. A similar investigation occurs for $\ell^p$ ($1<p<\infty$) in \cite{YD} and for $c$ and $c_0$ in \cite{SEl-S,YM}. The paper \cite{SEl-S} also treats $C_t$ when it acts on $bv_0$, $bv$, $c$, $\ell^1$, $\ell^\infty$ and the Hahn sequence space $h$. In the recent paper \cite{CR4} the setting for considering the operators $C_t$ is a large class of Banach lattices in $\omega$, which includes all rearrangement invariant sequence spaces (over $\N_0$ for counting measure), and many others.

Our aim is to study the compactness, the spectra and the dynamics of the generalized Cesàro operators $C_t$, for $t\in [0,1)$, when they act in certain classical, \textit{non-normable} sequence spaces $X\subseteq \omega$. Besides $\omega$ itself, the Fr\'echet spaces considered are $\ell(p+)$, $ces(p+)$ and $d(p+)$, for $1\leq p<\infty$, as well as the (LB)-spaces $\ell(p-)$, $ces(p-)$ and $d(p-)$, for $1<p\leq \infty$.

In Section 2 we formulate various preliminaries that will be needed in the sequel concerning particular properties of the spaces $X$ that we consider, as well as linear operators between such spaces. We also collect some general results required to determine the spectra of operators $T$ acting in the spaces $X$ and the compactness of their dual operator $T'$ acting in the strong dual space $X'_\beta$ of $X$.

Section 3 is devoted to a detailed study of the operators $C_t$, for $t\in [0,1)$, when they act in $\omega$. These operators are \textit{never} compact (c.f. Proposition \ref{nocompact-omega}) and their spectrum is completely described in Theorem \ref{Sp-omega} where, in particular, it is established that the set of all eigenvalues of $C_t$ is independent of $t$ and equals $\Lambda:=\{\frac{1}{n+1}\,:\, n\in\N_0\}$. The $1$-dimensional eigenspace corresponding to $\frac{1}{n+1}$, for each $n\in\N_0$, is identified in Lemma \ref{L-3-3}.

The situation for the other mentioned spaces $X\subseteq \omega$, which is rather different, is treated in Sections 4 and 5. The operator $C_t$, for $t\in [0,1)$, is \textit{always} compact in these spaces; see Theorem \ref{Sp-F}(i) for the case of Fr\'echet spaces and Theorem \ref{Spectrum-LB}(i) for the case of (LB)-spaces. The spectra of $C_t$ are fully determined in Theorems \ref{Sp-F}(ii) and \ref{Spectrum-LB}(ii), and the $1$-dimensional eigenspace corresponding to each eigenvalue of $C_t$ is identified in Theorems \ref{Sp-F}(iii) and \ref{Spectrum-LB}(iii). We note, for all cases of $X$ and $t\in [0,1)$, that the set of all eigenvalues of $C_t$ is again $\Lambda$. The main tool is a factorization result stating  that $C_t=D_\varphi R_t$, where $D_\varphi\colon X\to X$ is a compact (diagonal) operator in $X$ and $R_t\colon X\to X$ is a continuous linear operator; see Propositions \ref{P-4-4}(iii) and \ref{P-5-2}(iii).

For the definition of a mean ergodic operator and the notion of a supercyclic operator we refer to Section 6, where the relevant operators under consideration are $C_t$ acting in the spaces $X$, for each $t\in [0,1)$. It is necessary to determine some abstract results for linear operators in general lcHs' (c.f., Theorems \ref{BoundedOp} and \ref{OP}), which are then applied to $C_t$ to show that it is both power bounded and uniformly mean ergodic in all spaces $X\not=\omega$; see Theorem \ref{Ct-PM}. The same is true for $C_t$ acting in $\omega$; see Theorem \ref{Dyn-omega}. In this section we also investigate the properties of the dual operators $C'_t$ acting in $X'_\beta$, which are given by \eqref{DualC} and \eqref{eq.DualeO-X}. The operators $C_t'$ are compact and their spectra are identified in Proposition \ref{PP-dualOperator}, where it is also shown that the set of all eigenvalues of $C'_t$ is $\Lambda$. Moreover, for each $n\in\N_0$, the eigenvector in $X'_\beta$ spanning the $1$-dimensional eigenspace corresponding to $\frac{1}{n+1}\in\Lambda$ is also determined. A consequence of $C'_t$ having a rich supply of eigenvalues is  that each operator $C_t\colon X\to X$, for $t\in [0,1)$, fails to be supercyclic. Moreover, it is established in Proposition \ref{P-6-8} that $C'_t\colon X'_\beta\to X'_\beta$ is power bounded, uniformly mean ergodic but, not supercyclic. It should be noted that the main results in this section are also new for $C_t$ acting in the Banach spaces $\ell^p$, $ces(p)$ and $d_p$.

\markboth{A.\,A. Albanese, J. Bonet and W.\,J. Ricker}%
{\MakeUppercase{Spectral properties of  generalized Ces\`aro operators}}

\section{Preliminaries}

Given locally convex Haudorff spaces $X, Y$ (briefly, lcHs) we denote by $\cL(X,Y)$ the space of all continuous linear operators from $X$ into $Y$. If $X=Y$, then we simply write $\cL(X)$ for $\cL(X,X)$. Equipped with the topology  of pointwise convergence  $\tau_s$ on $X$ (i.e., the strong operator topology) the lcHs $\cL(X)$ is denoted by $\cL_s(X)$ and for the topology $\tau_b$ of uniform convergence on bounded sets the lcHs $\cL(X)$ is denoted by $\cL_b(X)$.
Denote by $\cB(X)$ the collection of all bounded subsets of $X$ and by $\Gamma_X$ a system of continuous seminorms determing the topology of $X$.
 The identity operator on $X$ is denoted by $I$.  The \textit{dual operator} of $T\in \cL(X)$ is denoted by  $T'$; it acts in the topological dual space $X':=\cL(X,\C)$ of $X$. Denote by $X'_\si$ (resp., by $X'_\beta$) the space $X'$ with the weak* topology $\si(X',X)$ (resp., with the strong topology $\beta(X',X)$); see \cite[\S 21.2]{23} for the definition. It is known that $T'\in \cL(X'_\si)$ and $T'\in \cL(X_\be')$,  \cite[p.134]{24}.
For the general theory of functional analysis and operator theory relevant to this paper see, for example, \cite{Ed,Gr,J,MV,PB,Wa}.

\begin{lemma}\label{Iso} Let $X$ be a  lcHs and $T\in \cL(X)$ be an isomorphism of $X$ onto itself. Then $T'$ is an isomorphism of $X'_\beta$ onto itself. If, in addition, $X$ is  complete and barrelled, then $T$ is an isomorphism of $X$ onto itself if, and only if, $T'$ is an isomorphism of $X'_\beta$ onto itself.
\end{lemma}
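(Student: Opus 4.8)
The plan is to prove the two statements in order, starting with the unconditional one and then handling the equivalence under the extra hypotheses.

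**First part (unconditional).** Let $T \in \cL(X)$ be an isomorphism of $X$ onto itself, and let $S := T^{-1} \in \cL(X)$ be its inverse, so $ST = TS = I$. The key observation is that dualizing is a contravariant functor: for operators $A, B \in \cL(X)$ one has $(AB)' = B'A'$ and $I' = I$ (the identity on $X'$). Applying this to $ST = TS = I$ gives $T'S' = S'T' = I$ on $X'$, so $S' = (T')^{-1}$ as a linear bijection of $X'$ onto itself. It remains to check that $T'$ and its inverse $S'$ are both continuous for the strong topology $\beta(X',X)$. This is exactly the general fact, already quoted in the excerpt from \cite[p.134]{24}, that the dual of a continuous operator is continuous on $X'_\beta$; applying it to both $T$ and $S = T^{-1}$ shows $T' \in \cL(X'_\beta)$ and $S' = (T')^{-1} \in \cL(X'_\beta)$. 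Hence $T'$ is an isomorphism of $X'_\beta$ onto itself. I do not expect any obstacle here — it is a clean functoriality argument plus a cited continuity fact.

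**Second part (the equivalence).** Now assume in addition that $X$ is complete and barrelled. The forward implication is just the first part. For the converse, suppose $T'$ is an isomorphism of $X'_\beta$ onto itself. By the first part applied to the space $X'_\beta$ in place of $X$, the bidual operator $T'' := (T')'$ is an isomorphism of $(X'_\beta)'_\beta$ onto itself. The strategy is to transport this back to $X$ via the canonical evaluation map $J_X \colon X \to (X'_\beta)'_\beta$, $x \mapsto (\,f \mapsto f(x)\,)$. Since $X$ is barrelled, $J_X$ is a topological isomorphism of $X$ onto $(X'_\beta)'_\beta$ — i.e. $X$ is reflexive in the weak sense that the natural map is onto, but more precisely barrelledness gives that $J_X$ is a homeomorphism onto its image and completeness (together with barrelledness) that the image is all of $(X'_\beta)'_\beta$; this is the standard characterization of (complete) barrelled spaces as exactly those for which $J_X$ is a topological isomorphism onto the bidual. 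One then checks the intertwining relation $T'' \circ J_X = J_X \circ T$, which is immediate from the definitions: $(T'' J_X x)(f) = (J_X x)(T' f) = (T'f)(x) = f(Tx) = (J_X T x)(f)$ for all $f \in X'$. Consequently $T = J_X^{-1} \circ T'' \circ J_X$ is a composition of isomorphisms, hence an isomorphism of $X$ onto itself.

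**Main obstacle.** The only delicate point is the precise statement that, for $X$ complete and barrelled, the canonical map $J_X$ is a topological isomorphism of $X$ onto the strong bidual $(X'_\beta)'_\beta$ — in particular that it is \emph{surjective}. Barrelledness alone gives that $J_X$ is a homeomorphism onto its image (the topology of $X$ coincides with $\beta(X, X'_\beta)$ restricted appropriately, and the Banach–Mackey/barrelled machinery identifies the bounded sets correctly); completeness is what upgrades this to surjectivity onto the bidual (a complete barrelled space is ``B-complete'' in the relevant sense). I would cite the appropriate result from one of \cite{Ed,J,MV,PB} for this identification rather than reprove it, and the rest of the argument is then purely formal functoriality. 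Everything else — the intertwining identity, $(AB)' = B'A'$, and the reduction of the converse to applying the forward direction to $X'_\beta$ — is routine.
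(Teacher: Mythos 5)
Your first part is correct and is essentially verbatim the paper's argument: dualize $TT^{-1}=T^{-1}T=I$ and invoke the cited fact that $S\in\cL(X)$ implies $S'\in\cL(X'_\beta)$. Likewise, the reduction of the converse to the statement ``$T''$ is an isomorphism of $X''_\beta$ $\Rightarrow$ $T$ is an isomorphism of $X$'' is exactly what the paper does (it then cites the proof of Lemma 3 in \cite{ABR-00} for this last step).

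However, your proof of that last step has a genuine gap. You claim that for $X$ complete and barrelled the evaluation map $J_X\colon X\to (X'_\beta)'_\beta$ is a topological isomorphism \emph{onto} the strong bidual. That is precisely the definition of reflexivity, and complete barrelled spaces need not be reflexive: $\ell^1$, $c_0$ and $C[0,1]$ are Banach (hence complete and barrelled) but $J_X$ is not surjective for any of them. There is no ``standard characterization'' of the kind you want to cite. What the hypotheses actually give is weaker: barrelledness yields that $J_X$ is a topological isomorphism onto its image (strongly bounded subsets of $X'$ are equicontinuous, so $\beta(X'',X')$ induces the original topology on $J_X(X)$), and completeness then yields that $J_X(X)$ is a \emph{closed} subspace of $X''_\beta$. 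With only this, the formula $T=J_X^{-1}\circ T''\circ J_X$ is not yet available, because one must still show that $(T'')^{-1}$ maps $J_X(X)$ into $J_X(X)$ --- and that is essentially the surjectivity of $T$ one is trying to prove. The argument can be repaired as follows: since $T''$ is a topological isomorphism of $X''_\beta$ and restricts to $T$ on the closed topologically embedded copy $J_X(X)$ of $X$, the operator $T$ is a topological isomorphism of $X$ onto its range; the range is therefore complete, hence closed in $X$. On the other hand $T'$ is injective (being bijective), so $\overline{T(X)}={}^{\circ}(\Ker T')=X$, i.e.\ $T(X)$ is dense. A closed dense subspace is everything, so $T$ is surjective and hence an isomorphism. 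Without some such closed-range-plus-density (or an equivalent equicontinuity estimate of the form $p(x)\le q(Tx)$ extracted from the continuity of $(T')^{-1}$ on $X'_\beta$), the converse direction is not established.
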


\begin{proof} If $T$ is an isomorphism of $X$ onto itself, then 	 $T^{-1}\in \cL(X)$ exists with $TT^{-1}=T^{-1}T=I$. It was already noted that $T', (T^{-1})'\in \cL(X'_\beta)$ and clearly $(T^{-1})'T'=T'(T^{-1})'=I$.
	Thus, $(T')^{-1}$ exists in $\cL(X'_\beta)$ and  $(T')^{-1}=(T^{-1})'$; that is,
	$T'$ is an isomorphism of $X_\beta'$ onto itself.
	
	Suppose that $X$ is also complete and  barrelled and that $T'\in \cL(X'_\beta)$ is an isomorphism of $X'_\beta$ onto itself. As proved above, $T''$ is necessarily an isomorphism of $X''_\beta$ onto itself. By the proof of  Lemma 3 in \cite{ABR-00} it follows that $T$ is an isomorphism of $X$ onto itself. This completes
	the proof.
\end{proof}

Given a lcHs $X$ and $T\in \cL(X)$, the resolvent set $\rho(T;X)$ of $T$ consists of all $\lambda\in\C$ such that $R(\lambda,T):=(\lambda I-T)^{-1}$ exists in $\cL(X)$. The set $\sigma(T;X):=\C\setminus \rho(T;X)$ is called the \textit{spectrum} of $T$. The \textit{point spectrum}  $\sigma_{pt}(T;X)$ of $T$ consists of all $\lambda\in\C$ (also called an eigenvalue  of $T$) such that $(\lambda I-T)$ is not injective. An eigenvalue $\lambda$ of $T$ is called \textit{simple} if ${\rm dim} \Ker (\lambda I-T)=1$. Some authors (eg. \cite{Wa}) prefer the subset $\rho^*(T;X)$ of $\rho(T;X)$ consisting of all $\lambda\in\C$ for which there exists $\delta>0$ such that the open disc $B(\lambda,\delta):=\{z\in\C:\, |z-\lambda|<\delta\}\su \rho(T;X)$ and $\{R(\mu,T):\, \mu\in B(\lambda,\delta)\}$ is an equicontinuous subset of $\cL(X)$. Define $\sigma^*(T;X):=\C\setminus \rho^*(T;X)$, which is a closed set with $\sigma(T;X)\su \sigma^*(T;X)$. If $X$ is a Banach space, then $\sigma(T;X)= \sigma^*(T;X)$. For the spectral theory of compact operators in lcHs' we refer to \cite{Ed,Gr}, for example.

\begin{corollary}\label{SpettroDuale} Let $X$ be a complete, barrelled lcHs and $T\in \cL(X)$. Then \begin{equation}\label{eq.DD-spettro}
		\rho(T;X)=\rho(T';X'_\beta)\ \mbox{ and }\   \sigma(T;X)=\sigma(T';X'_\beta).
\end{equation}
Moreover, 
\begin{equation}\label{eq.DD-Spettro}
	\sigma^*(T'; X'_\beta)\subseteq \sigma^*(T;X).
\end{equation}
\end{corollary}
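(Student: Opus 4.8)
The plan is to reduce both assertions to Lemma~\ref{Iso} together with elementary duality properties of resolvents and of equicontinuous families.

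First I would prove the identities \eqref{eq.DD-spettro}. Fix $\lambda\in\C$ and note that the dual operator of $\lambda I-T\in\cL(X)$ is precisely $\lambda I-T'\in\cL(X'_\beta)$. Since $X$ is complete and barrelled, Lemma~\ref{Iso} applied to $\lambda I-T$ shows that $\lambda I-T$ is an isomorphism of $X$ onto itself if and only if $\lambda I-T'$ is an isomorphism of $X'_\beta$ onto itself, i.e. $\lambda\in\rho(T;X)$ if and only if $\lambda\in\rho(T';X'_\beta)$. Hence $\rho(T;X)=\rho(T';X'_\beta)$, and passing to complements in $\C$ yields $\sigma(T;X)=\sigma(T';X'_\beta)$.

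Next, for \eqref{eq.DD-Spettro} I would establish the equivalent inclusion $\rho^*(T;X)\su\rho^*(T';X'_\beta)$. Given $\lambda\in\rho^*(T;X)$, choose $\delta>0$ with $B(\lambda,\delta)\su\rho(T;X)$ such that $\cH:=\{R(\mu,T):\mu\in B(\lambda,\delta)\}$ is equicontinuous in $\cL(X)$. By the first part, $B(\lambda,\delta)\su\rho(T';X'_\beta)$; moreover, since the dual of an invertible operator is invertible with $(S^{-1})'=(S')^{-1}$ (as already observed in the proof of Lemma~\ref{Iso}), we have $R(\mu,T')=(R(\mu,T))'$ for every $\mu\in B(\lambda,\delta)$. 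Thus it remains to check that $\cH':=\{S':S\in\cH\}$ is equicontinuous in $\cL(X'_\beta)$, which will give $\lambda\in\rho^*(T';X'_\beta)$.

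The one step that requires slight care — and which I expect to be the only (minor) obstacle — is the general fact that the dual family of an equicontinuous family $\cH\su\cL(X)$ is equicontinuous in $\cL(X'_\beta)$. To prove it I would first observe that $\cH$ maps bounded sets uniformly into bounded sets: for a bounded $B\su X$ and any $0$-neighbourhood $V$ in $X$, choosing a $0$-neighbourhood $U$ with $S(U)\su V$ for all $S\in\cH$ and $\kappa>0$ with $B\su\kappa U$ gives $\bigcup_{S\in\cH}S(B)\su\kappa V$; as $V$ was arbitrary, $B_1:=\bigcup_{S\in\cH}S(B)$ is a bounded subset of $X$. Since the polars $B^\circ$ of bounded sets $B\su X$ form a base of $0$-neighbourhoods of $X'_\beta$, and since for $x'\in B_1^\circ$, $S\in\cH$ and $x\in B$ one has $|\langle x,S'x'\rangle|=|\langle Sx,x'\rangle|\le 1$ (because $Sx\in B_1$), it follows that $S'(B_1^\circ)\su B^\circ$ for every $S\in\cH$, i.e. $\cH'$ is equicontinuous in $\cL(X'_\beta)$. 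Applying this to the resolvent family $\cH$ above shows that $\{R(\mu,T'):\mu\in B(\lambda,\delta)\}$ is equicontinuous, whence $\lambda\in\rho^*(T';X'_\beta)$, and \eqref{eq.DD-Spettro} follows. (Note that only this one inclusion is to be expected, since passing the equicontinuity back from $\cH'$ to $\cH$ would require additional hypotheses on $X$.)
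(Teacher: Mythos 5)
Your proof is correct and follows essentially the same route as the paper: \eqref{eq.DD-spettro} is deduced from Lemma~\ref{Iso} applied to $\lambda I-T$, and \eqref{eq.DD-Spettro} is obtained by showing $\rho^*(T;X)\su\rho^*(T';X'_\beta)$ via the identity $R(\mu,T)'=R(\mu,T')$ and the equicontinuity of the dual family. The only difference is that the paper cites K\"othe \S 39.3(6) for the fact that the adjoints of an equicontinuous family form an equicontinuous subset of $\cL(X'_\beta)$, whereas you prove this correctly in-line via polars of bounded sets.
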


\begin{proof} The identities in \eqref{eq.DD-spettro} are an immediate consequence of 
Lemma \ref{Iso}.

Fix $\lambda\in \rho^*(T; X)$. Then there exists $\delta>0$ such that $B(\lambda, \delta)\subseteq \rho(T;X)$ and $\{R(\mu;T)\,:\, \mu\in B(\lambda,\delta)\}\subseteq \cL(X)$ is equicontinuous. For each $\mu\in B(\lambda,\delta)$ it follows from the proof of Lemma \ref{Iso} that $R(\mu, T)'=((\mu I-T)^{-1})'=(\mu I-T')^{-1}=R(\mu, T')$. Then \cite[\S 39.3(6), p.138]{24} implies that $\{R(\mu, T')\, :\, \mu\in B(\lambda,\delta)\}\subseteq \cL(X'_\beta)$ is equicontinuous, that is, $\lambda\in \rho^*(T';X'_\beta)$. So, we have established that  $\rho^*(T;X)\subseteq \rho^*(T';X'_\beta)$; taking complements yields \eqref{eq.DD-Spettro}. 
\end{proof}

 A linear map $T\colon X\to Y$, with $X,Y$ lcHs', is called \textit{compact} if there exists a neighbourhood $\cU$ of $0$ in $X$ such that $T(\cU)$ is a relatively compact set in $Y$. It is routine to show that necessarily $T\in \cL(X,Y)$. For the following result see \cite[\S 42.1(1)]{24} or \cite[Proposition 17.1.1]{J}.

 \begin{lemma}\label{Comp-Op} Let $X$ be a lcHs. The compact operators are a 2-sided ideal in $\cL(X)$.\end{lemma}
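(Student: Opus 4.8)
The plan is to verify directly from the definition that the collection $\mathcal{K}(X)$ of compact operators on $X$ is both a linear subspace of $\cL(X)$ and absorbs composition with arbitrary members of $\cL(X)$ on either side. Throughout I would use the two elementary facts valid in any topological vector space: the continuous image of a compact set is compact, and the vector sum of two compact sets is compact, being the image of their product under the continuous addition map $X\times X\to X$. Combining these, if $A,B\su X$ are relatively compact and $\al\in\C$, then $\al A$ and $A+B$ are relatively compact, since $\ov{\al A}\su \al\ov A$ and $\ov{A+B}\su \ov A+\ov B$, and the right-hand sets are compact (hence closed, $X$ being Hausdorff).

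First I would show $\mathcal{K}(X)$ is a linear subspace. Given $T_1,T_2\in\mathcal{K}(X)$, choose neighbourhoods $\cU_1,\cU_2$ of $0$ with $T_j(\cU_j)$ relatively compact for $j=1,2$, and put $\cU:=\cU_1\cap\cU_2$, again a neighbourhood of $0$. For any scalars $\al_1,\al_2$ one has
\[
(\al_1T_1+\al_2T_2)(\cU)\su \al_1T_1(\cU_1)+\al_2T_2(\cU_2),
\]
and the right-hand side is relatively compact by the facts just recorded; hence $\al_1T_1+\al_2T_2\in\mathcal{K}(X)$ (the zero operator being trivially compact).

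Next I would check the ideal property. Fix $T\in\mathcal{K}(X)$ and $S\in\cL(X)$, and pick a neighbourhood $\cU$ of $0$ with $T(\cU)$ relatively compact. For $ST$: the set $S(\ov{T(\cU)})$ is compact, being the continuous image of the compact set $\ov{T(\cU)}$, and since $(ST)(\cU)=S(T(\cU))\su S(\ov{T(\cU)})$ with the latter closed, $\ov{(ST)(\cU)}\su S(\ov{T(\cU)})$ is compact, so $ST\in\mathcal{K}(X)$. For $TS$: by continuity of $S$ the set $\cV:=S^{-1}(\cU)$ is a neighbourhood of $0$ in $X$, and $S(\cV)\su\cU$ gives $(TS)(\cV)=T(S(\cV))\su T(\cU)$, which is relatively compact, so $TS\in\mathcal{K}(X)$. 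Since every compact operator automatically lies in $\cL(X)$ (as noted just before the statement), this shows $\mathcal{K}(X)$ is a two-sided ideal in $\cL(X)$. There is no real obstacle here; the only point needing a moment's care is the stability of relative compactness under vector addition and scalar multiplication, which is why I isolate it at the outset.
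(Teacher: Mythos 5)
Your proof is correct and complete: the paper itself gives no argument for this lemma, merely citing K\"othe \S 42.1(1) and Jarchow, Proposition 17.1.1, and your direct verification is precisely the standard argument found there (stability of relative compactness under sums and scalar multiples for the subspace property; $S(\ov{T(\cU)})$ compact for $ST$, and $\cV=S^{-1}(\cU)$ for $TS$). The one point that genuinely needs the care you give it --- that $\ov{A+B}\su \ov A+\ov B$ because the latter is compact, hence closed in a Hausdorff space --- is handled correctly.
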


 To establish the continuity of	$C_t$, for $t\in [0,1]$, in the Fr\'echet spaces considered in this paper we will need the following result, \cite[Lemma 25]{ABR8}.

 \begin{lemma}\label{Cont-F}
 	 Let $X=\cap_{n=1}^\infty X_n$ and $Y=\cap_{m=1}^\infty Y_m$ be two Fr\'echet spaces which resp. are  the intersection of the sequence of Banach spaces $(X_n,\|\cdot\|_n)$, for $n\in\N$, and of the sequence of Banach spaces $(Y_m,|||\cdot|||_m)$, for $m\in\N$, satisfying $X_{n+1}\subset X_n$ with $\|x\|_n\leq \|x\|_{n+1}$ for each $n\in\N$ and $x\in X_{n+1}$ and $Y_{m+1}\subset Y_m$ with $|||y|||_m\leq |||y|||_{m+1}$ for each $m\in\N$ and $y\in Y_{m+1}$. Suppose that $X$ is dense in $X_n$ for each $n\in\N$. Then a linear operator $T\colon X\to Y$ is continuous if, and only if, for each $m\in\N$ there exists $n\in\N$ such that the operator $T$ has a unique continuous extension $T_{n,m}\colon X_n\to Y_m$.
 \end{lemma}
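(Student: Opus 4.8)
The plan is to reduce the statement to a standard continuous-extension-by-density argument, after first making the Fréchet topologies of $X$ and $Y$ explicit. Since $X=\cap_{n=1}^\infty X_n$ carries the projective (intersection) topology and the norms satisfy $\|x\|_n\le\|x\|_{n+1}$ on $X_{n+1}$, the sequence $(\|\cdot\|_n)_n$ (restricted to $X$) is an increasing fundamental system of seminorms for $X$; likewise $(|||\cdot|||_m)_m$ for $Y$. In particular every inclusion $X\hookrightarrow X_n$ and $Y\hookrightarrow Y_m$ is continuous, and the monotonicity of the seminorm systems means that whenever a continuous seminorm on $X$ is dominated by a finite maximum of the $\|\cdot\|_n$, it is in fact dominated by a single one of them.

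For the implication ($\Leftarrow$): assume that for each $m$ there is an index $n$ and a continuous extension $T_{n,m}\colon X_n\to Y_m$ of $T$. Choose $C>0$ with $|||T_{n,m}\xi|||_m\le C\|\xi\|_n$ for all $\xi\in X_n$. Restricting to $\xi=x\in X$ and using $T_{n,m}|_X=T$ yields $|||Tx|||_m\le C\|x\|_n$ for all $x\in X$. As $m$ is arbitrary and $(|||\cdot|||_m)_m$ generates the topology of $Y$, this is precisely the statement that $T\colon X\to Y$ is continuous.

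For the implication ($\Rightarrow$): fix $m$. Continuity of $T\colon X\to Y$ makes $x\mapsto|||Tx|||_m$ a continuous seminorm on $X$, so by the monotonicity of $(\|\cdot\|_n)_n$ there exist $n\in\N$ and $C>0$ with $|||Tx|||_m\le C\|x\|_n$ for all $x\in X$. Now invoke the density of $X$ in $(X_n,\|\cdot\|_n)$: given $\xi\in X_n$, pick $x_k\in X$ with $\|x_k-\xi\|_n\to 0$; then $(x_k)_k$ is $\|\cdot\|_n$-Cauchy, hence $(Tx_k)_k$ is $|||\cdot|||_m$-Cauchy by the estimate, hence converges in the Banach space $Y_m$ to a limit we call $T_{n,m}\xi$. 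One checks in the usual way that $T_{n,m}\xi$ does not depend on the approximating sequence, that $T_{n,m}\colon X_n\to Y_m$ is linear with $T_{n,m}|_X=T$, and that letting $k\to\infty$ in the inequality gives $|||T_{n,m}\xi|||_m\le C\|\xi\|_n$, so $T_{n,m}$ is continuous. Uniqueness of the extension is immediate since $X$ is dense in $X_n$ and $Y_m$ is Hausdorff.

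The argument is essentially routine; the only points that require care are the explicit identification of the Fréchet topologies in terms of the increasing Banach norms — which is what allows a single index $n$ to work for each $m$ — and the verification that the extended operator lands in $Y_m$ itself rather than in some abstract completion, which is exactly where the density of $X$ in $X_n$ and the completeness of $Y_m$ are used.
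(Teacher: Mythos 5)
Your argument is correct: the paper itself gives no proof of this lemma (it is quoted from \cite[Lemma 25]{ABR8}), and your density-plus-completeness extension argument, together with the observation that the increasing norms $\|\cdot\|_n$ restricted to $X$ form a fundamental system of seminorms so that a single index $n$ suffices for each $m$, is exactly the standard and expected proof. No gaps.
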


 The following result, based on \cite[Lemma 2.1]{ABR3}, will be needed to determine the spectra of $C_t$, for $t\in [0,1]$, in the Fr\'echet spaces considered in this paper.

 \begin{lemma}\label{Sp-FF} Let $X=\cap_{n=1}^\infty X_n$ be a Fr\'echet space which is the intersection of a sequence of Banach spaces $(X_n,\|\cdot\|_n)$, for $n\in\N$, satisfying $X_{n+1}\subset X_n$ with $\|x\|_n\leq \|x\|_{n+1}$ for each $n\in\N$ and $x\in X_{n+1}$. Let $T\in \cL(X)$ satisfy the following condition:
 	
 	(A) For each $n\in\N$ there exists $T_n\in \cL(X_n)$ such that the restriction of $T_n$ to $X$ (resp. of $T_n$ to $X_{n+1}$) coincides with $T$ (resp. with $T_{n+1}$).
 	
 	Then the following properties are satisfied.
 	\begin{itemize}
 		\item[\rm (i)] $\sigma(T;X)\su \cup_{n=1}^\infty \sigma(T_n;X_n)$ and $\sigma_{pt}(T;X)\su \cap_{n=1}^\infty \sigma_{pt}(T_n;X_n)$. 
 		\item[\rm (ii)] If $\cup_{n=1}^\infty \sigma(T_n;X_n)\su\overline{\sigma(T;X)}$, then $\sigma^*(T;X)=\overline{\sigma(T;X)}$.
 		 \item[\rm (iii)] If ${\rm dim}\ker(\lambda I-T_m)=1$ for each $\lambda \in \cap_{n=1}^\infty \sigma_{pt}(T_n;X_n)$ and $m\in\N$, then $\sigma_{pt}(T;X)=\cap_{n=1}^\infty \sigma_{pt}(T_n;X_n)$.
 	\end{itemize}
 	 \end{lemma}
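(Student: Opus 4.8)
The plan is to exploit condition (A), which says that $T$ is simultaneously the restriction to $X$ of operators $T_n\in\cL(X_n)$ that are compatible under the chain of inclusions $X\subset\cdots\subset X_{n+1}\subset X_n$. For (i), I would show $\rho(T;X)\supseteq\bigcap_n\rho(T_n;X_n)$ by a standard consistency argument: if $\lambda I-T_n$ is invertible in $\cL(X_n)$ for every $n$, then the resolvents $R(\lambda,T_n)$ agree on the dense subspace $X$ (because the $T_n$ are consistent, so are their inverses on the overlap), hence they glue to a linear map on $X$; since this map is continuous into each $X_n$ it is continuous into $X=\bigcap_n X_n$ with the projective-limit topology, and it inverts $\lambda I-T$. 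This gives $\sigma(T;X)\subseteq\bigcup_n\sigma(T_n;X_n)$. For the point-spectrum inclusion, if $x\in X$ is a nonzero eigenvector of $T$ for $\lambda$, then since $x\in X_n$ and $T_n x=Tx=\lambda x$ for every $n$, it is an eigenvector of every $T_n$; hence $\sigma_{pt}(T;X)\subseteq\bigcap_n\sigma_{pt}(T_n;X_n)$.

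For (ii), the inclusion $\sigma(T;X)\subseteq\sigma^*(T;X)$ is automatic, and $\sigma^*(T;X)$ is closed, so $\overline{\sigma(T;X)}\subseteq\sigma^*(T;X)$ always. For the reverse, fix $\lambda\notin\overline{\sigma(T;X)}$; by hypothesis $\lambda\notin\bigcup_n\sigma(T_n;X_n)$, and since $\overline{\sigma(T;X)}$ is closed there is a disc $B(\lambda,\delta)$ still disjoint from it, hence (again by the hypothesis $\bigcup_n\sigma(T_n;X_n)\subseteq\overline{\sigma(T;X)}$) disjoint from every $\sigma(T_n;X_n)$. On each Banach space $X_n$, $\{R(\mu,T_n):\mu\in B(\lambda,\delta)\}$ is a norm-bounded family (holomorphic and locally bounded on a slightly larger compact disc), so it is equicontinuous on $X_n$, and by the gluing argument from part (i), $R(\mu,T)$ is the restriction of $R(\mu,T_n)$ to $X$ for each $n$. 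A subset of $\cL(X)$ is equicontinuous precisely when it is equicontinuous as a family of maps into each $X_n$; therefore $\{R(\mu,T):\mu\in B(\lambda,\delta)\}$ is equicontinuous in $\cL(X)$, giving $\lambda\in\rho^*(T;X)$. Hence $\sigma^*(T;X)\subseteq\overline{\sigma(T;X)}$.

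For (iii), the inclusion $\sigma_{pt}(T;X)\subseteq\bigcap_n\sigma_{pt}(T_n;X_n)$ is part (i), so it remains to prove the reverse under the one-dimensionality hypothesis. Fix $\lambda\in\bigcap_n\sigma_{pt}(T_n;X_n)$. Then $\ker(\lambda I-T_1)$ is a one-dimensional subspace of $X_1$, spanned by some $x\neq 0$. I would argue that $x$ in fact lies in $X$: since $\lambda$ is also an eigenvalue of $T_2$ and $\ker(\lambda I-T_2)$ is one-dimensional, pick a generator $x^{(2)}\in X_2\subset X_1$; it is a nonzero element of $\ker(\lambda I-T_1)$ by compatibility of $T_1,T_2$, hence a scalar multiple of $x$, so $x\in X_2$. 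Iterating, $x\in X_n$ for every $n$, i.e.\ $x\in X$, and $(\lambda I-T)x=(\lambda I-T_n)x|_{X}=0$, so $\lambda\in\sigma_{pt}(T;X)$.

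The routine parts are the gluing of consistent resolvents into a continuous operator on the projective limit and the equivalence ``equicontinuous in $\cL(X)$ $\iff$ equicontinuous into each $X_n$'', both of which are standard for projective limits of Banach spaces; I would cite \cite[Lemma 2.1]{ABR3} for the bulk of (i)--(ii) as the statement already advertises. The step that needs the most care is (iii): one must genuinely use that the eigenspaces of the $T_n$ are one-dimensional, since without that a priori an eigenvector of $T_n$ need not survive into $X_{n+1}$; the one-dimensionality forces the eigenvector to be essentially unique at each level, which lets the compatibility of the $T_n$ propagate it down to $X$. I would make sure the normalization/scalar-multiple bookkeeping in that descent is stated cleanly rather than glossed over.
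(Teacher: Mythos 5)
Your proposal is correct and follows essentially the same route as the paper: the resolvent inclusion in (i) and part (ii) are delegated to (or re-derived along the lines of) \cite[Lemma 2.1]{ABR3} via gluing of the consistent resolvents and their uniform norm bounds, the point-spectrum inclusion is the same direct observation, and your descent argument for (iii) --- using one-dimensionality of each $\ker(\lambda I-T_n)$ to force the generator of $\ker(\lambda I-T_1)$ into every $X_n$, hence into $X$ --- is exactly the paper's argument. No gaps.
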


  \begin{proof} In view of \cite[Lemma 2.1]{ABR3} it remains to show the validity of the inclusion $\sigma_{pt}(T;X)\su\cap_{n=1}^\infty \sigma_{pt}(T_n,X_n)$ in the statement (i) and the identity in (iii).

  	The inclusion  $\sigma_{pt}(T;X)\su \cap_{n=1}^\infty \sigma_{pt}(T_n;X_n)$ is clear. Indeed, if $(\lambda I-T)x=0$ for some $x\in X\setminus\{0\}$ and $\lambda\in\C$, then in view of $X\subseteq X_n$ and $T_n|_X=T$, for $n\in\N$, (see condition (A)), we have that $x\in X_n\setminus\{0\}$ and $(\lambda I-T_n)x=0$ for every $n\in\N$. Hence, $\lambda \in \cap_{n=1}^\infty \sigma_{pt}(T_n,X_n)$.
  	
  	To establish the validity of (iii), fix $\lambda \in \cap_{n=1}^\infty \sigma_{pt}(T_n,X_n)$. Then, for each $n\in\N$, there exists $x_n\in X_n\setminus\{0\}$ such that $(\lambda I-T_n)x_n=0$. Since $x_{n+1}\in X_{n+1}\su X_n$, for $n\in\N$, condition (A)  implies that  also  $(\lambda I-T_n)x_{n+1}=0$ in $X_n$ for each $n\in\N$. So, for each $n\in\N$, we have that $x_{n+1}=\mu_n x_n$ for some $\mu_n\in \C\setminus\{0\}$. Therefore, $x_n=(\prod_{j=1}^{n-1}\mu_j)x_1$, with $\prod_{j=1}^{n-1}\mu_j\not =0$. Accordingly, $x_1\in X_n$ for each $n\in\N$ and hence, $x_1\in X$. On the other hand, applying again condition (A), we can conclude that $(\lambda I-T)x_1=(\lambda I-T_1)x_1=0$, i.e., $\lambda\in \sigma_{pt}(T;X)$.
  	\end{proof}

 Fr\'echet spaces $X$ which satisfy the assumptions of Lemma \ref{Sp-FF} are often called \textit{countably normed Fr\'echet spaces}; for the general theory of such spaces see \cite{G-S2}, for example.

A Hausdorff locally convex space $(X,\tau)$ is called an \textit{(LB)-space} if there is a sequence $(X_k)_{k\in\N}$ of Banach spaces satisfying $X_k\su X_{k+1}$ continuously for $k\in\N$,   $X=\cup_{k=1}^\infty X_k$ and $\tau$ is the finest locally convex topology on $X$ such that the natural inclusion $X_k\subset X$ is continuous for each $k\in\N$, \cite[pp.290-291]{MV}. In this case we write $X= \ind_k X_k$.
  If, in addition, $X$ is a \textit{regular} (LB)-space, \cite[p.83]{J}, then a set $B\subset X$ is bounded if and only if there exists $m\in\N$ such that $B\subset X_m$ and $B$ is bounded in the Banach space $X_m$. Complete (LB)-spaces are regular, \cite[\S 19.5(5)]{23}. All of the (LB)-spaces of sequences  considered in this note will be regular because of the following result, \cite[Proposition 25.19(2)]{MV}.

 \begin{lemma}\label{Reg-LB} Let $X=\ind_k X_k$ be an (LB)-space with an increasing union of reflexive Banach spaces $X=\cup_{k=1}^\infty X_k$ such that each inclusion $X_k\su X_{k+1}$, for $k\in\N$, is continuous. Then $X$ is complete and hence, also regular.
 \end{lemma}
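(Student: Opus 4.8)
The final statement to prove is Lemma~\ref{Reg-LB}: an (LB)-space $X = \ind_k X_k$ with an increasing union of \emph{reflexive} Banach spaces (with continuous inclusions) is complete, hence regular.

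The plan is to reduce completeness to a known criterion for (LB)-spaces and then exploit reflexivity to verify it. First I would recall the standard terminology: an inductive limit $X = \ind_k X_k$ is called \emph{boundedly retractive} (or to satisfy condition (M) of Retakh, or to be \emph{acyclic}) if for every $k$ there is $m \geq k$ such that the topologies of $X_m$ and of $X$ agree on the closed unit ball $B_k$ of $X_k$. A theorem going back to Grothendieck/Retakh (and presented in \cite[\S 25]{MV}; it is essentially part (1) of the cited Proposition~25.19) states that boundedly retractive (LB)-spaces are complete. So the whole task is to produce, for each $k$, an index $m \geq k$ for which the inclusion $X_m \hookrightarrow X$ induces the same topology on $B_k$ as $X_m$ does — equivalently, to show the inductive limit is ``regular enough''. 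The key structural input is that a \emph{separable-or-not} reflexive Banach space has a weakly compact closed unit ball; more to the point, bounded sets in a reflexive Banach space are relatively weakly compact, and weak compactness is inherited along the continuous inclusions $X_k \hookrightarrow X_{k+1} \hookrightarrow \cdots \hookrightarrow X$.

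The heart of the argument: fix $k$ and let $B = B_k$ be the closed unit ball of $X_k$. Since $X_k$ is reflexive, $B$ is $\sigma(X_k, X_k')$-compact. The inclusion $X_k \hookrightarrow X$ is continuous and linear, hence weak-to-weak continuous, so $B$ is $\sigma(X, X')$-compact in $X$; in particular $B$ is weakly closed in $X$ and weakly closed in each $X_m$. Now I would invoke the Grothendieck factorization theorem or a direct diagonal/closed-graph argument to see that the (LB)-space, being an increasing union of reflexive — hence complete — Banach spaces with weakly compact balls, is regular: any bounded set $A \subseteq X$ is contained and bounded in some $X_m$ (this is precisely what one must establish, and it follows because a Banach disc generated by $A$ in $X$ must land in some step $X_m$ by Baire-category/Grothendieck factorization applied to the identity on the normed space $X_A$), and then the weak compactness of balls forces the trace topology from $X$ on $B_k$ to coincide with that of $X_m$ (two comparable compact Hausdorff topologies on the same set coincide). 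Completeness then follows from the cited completeness criterion for boundedly retractive (LB)-spaces, and regularity is either part of that same package or follows from completeness via \cite[\S 19.5(5)]{23}, already quoted in the excerpt.

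The main obstacle I expect is the passage from ``reflexive steps'' to ``boundedly retractive / regular'': reflexivity does not by itself imply the inductive limit topology is well-behaved, and one genuinely needs the interplay between weak compactness of the balls $B_k$ in $X$ and the fact that on a weakly compact set the weak and any coarser Hausdorff (e.g.\ another Banach norm) topology relate rigidly. Concretely, the delicate point is showing that the norm topology of $X_m$ restricted to $B_k$ is \emph{not finer} than the topology induced by $X$ — i.e.\ that no sequence in $B_k$ can $X$-converge without $X_m$-converging; here one runs the argument through weak convergence (weak $X$-limits of a sequence in the weakly compact $B_k$ are weak $X_k$-limits, and comparing with a weakly convergent subnet pins down the limit), which is exactly where reflexivity is indispensable. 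Since \cite[Proposition~25.19(2)]{MV} is precisely the statement being quoted, in the write-up I would simply cite it; but the proof sketch above is the content one would reconstruct if pressed.
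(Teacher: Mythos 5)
The paper offers no proof of Lemma \ref{Reg-LB} at all: it is quoted verbatim from \cite[Proposition 25.19(2)]{MV}, so simply citing that result, as you say you would do in the final write-up, is exactly what the authors do. The problem lies in the reconstruction you sketch, whose central strategy is to show that $X=\ind_k X_k$ is boundedly retractive and then use ``boundedly retractive $\Rightarrow$ complete''. That intermediate claim is false under the stated hypotheses: reflexivity of the steps does \emph{not} imply bounded retractivity. For a Fr\'echet space $F$ whose strong dual is an (LB)-space, bounded retractivity of $F'_\beta$ forces $F$ to be quasinormable (this is Grothendieck's characterization of quasinormability via the strict Mackey convergence condition in the dual; the converse is the result of \cite{Bo} quoted in Section 2), and there exist reflexive, non-quasinormable Fr\'echet spaces of this type --- e.g.\ a Fr\'echet--Montel, non-Schwartz K\"othe echelon space $\lambda_2(A)$, whose strong dual $k_2(V)=\ind_k\ell^2(1/a_k)$ is then a complete, regular (LB)-space with reflexive steps that is not boundedly retractive. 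The paper itself implicitly concedes this: in the proof of Proposition \ref{PP-dualOperator} bounded retractivity of $\ell(p-)$, $ces(p-)$, $d(p-)$ is established by \emph{separate} arguments (quasinormability of the predual, or the (DFS) property), which would be redundant if it already followed from Lemma \ref{Reg-LB}.

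The two concrete steps you propose in support of this strategy also do not hold up. First, ``two comparable compact Hausdorff topologies coincide'' applies here only to the \emph{weak} topologies: $B_k$ is $\sigma(X_k,X_k')$-compact, whence $\sigma(X,X')$ and $\sigma(X_k,X_k')$ agree on $B_k$, but neither the topology induced by $X$ nor the norm topology of $X_m$ is compact on $B_k$ in general, so you cannot conclude that \emph{they} agree there --- and that is what bounded retractivity requires. Second, the regularity argument via Grothendieck factorization presupposes that the bounded set generates a Banach disc, i.e.\ local completeness of $X$, which for an (LB)-space is equivalent to the regularity being proved; the argument is circular. The actual proof behind \cite[Proposition 25.19(2)]{MV} runs through duality rather than retractivity: $X'_\beta=\proj_k (X_k)'_\beta$ is a Fr\'echet space, reflexivity of the steps identifies $X$, with its inductive topology, with the strong dual of this Fr\'echet space, and strong duals of metrizable locally convex spaces are complete (Grothendieck); completeness then yields regularity by \cite[\S 19.5(5)]{23}. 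If you do not wish to reproduce that argument, the bare citation is the right move.
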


An (LB)-space $X=\ind_k X_k$ is said to be \textit{boundedly retractive} if for every $B\in \cB(X)$ there exists $k\in\N$
 such that $B$ is contained  and bounded in $X_k$, and $X$ and $X_k$ induce the same topology on
$B$. The (LB)-space $X$ is said to be \textit{sequentially retractive} if for every null sequence in $X$ there exists $k\in\N$ such that the sequence is contained and converges to zero in $X_k$. Finally, the (LB)-space $X$ is
said to be \textit{compactly regular} if for every  compact subset $C$ of $X$ there exists $k\in\N$ such that $C$ is compact in $X_k$.    Each of these three notions implies the completeness of $X$, \cite[Corollary 2.8]{W}. Neus \cite{N} proved that all these notions are equivalent even for inductive limits of normed spaces.

 In the setting of boundedly retractive (LB)-spaces, the following general statement on the compactness of certain dual operators is valid.

\begin{prop}\label{Comp-dual} Let $X$ be a lcHs,  $Y=\ind_k Y_k$ be a boundedly retractive (LB)-space and $T\in \cL(X,Y)$ be compact. Then $T'\in \cL(Y'_\beta, X'_\beta)$ is compact.
\end{prop}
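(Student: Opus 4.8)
The plan is to reduce the statement to the classical Banach-space case (essentially Schauder's theorem), exploiting that a boundedly retractive (LB)-space is compactly regular. Since $T$ is compact, fix a neighbourhood $\cU$ of $0$ in $X$ with $K:=\overline{T(\cU)}$ compact in $Y$. By the theorem of Neus \cite{N}, bounded retractivity of $Y=\ind_k Y_k$ implies compact regularity, so $K$ is already compact in some Banach step $Y_m$. In particular $T(\cU)\su K\su Y_m$, so $T$ factors as $T=j_m\circ S$, where $j_m\colon Y_m\to Y$ is the (continuous) canonical inclusion and $S\colon X\to Y_m$ is $T$ viewed as a map into $Y_m$. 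Then $S$ is compact, since $\overline{S(\cU)}\su K$ is compact in $Y_m$; in particular $S\in\cL(X,Y_m)$. Moreover $T'=S'\circ j_m'$ with $j_m'\in\cL(Y'_\beta,(Y_m)'_\beta)$ (cf.\ \cite[\S 39.3]{24}), so, arguing as in Lemma \ref{Comp-Op} (a compact operator composed with a continuous one is compact), it suffices to prove that $S'\colon(Y_m)'_\beta\to X'_\beta$ is compact.

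For the second stage put $Z:=Y_m$, a Banach space, and take the candidate $0$-neighbourhood $B_{Z'}$, the closed unit ball of $Z'_\beta$; I would show that $S'(B_{Z'})$ is relatively compact in $X'_\beta$. The key computation is this: for any bounded $A\su X$ one has $A\su n\cU$ for some $n\in\N$, hence $\overline{S(A)}$ is a closed subset of the compact set $n\overline{S(\cU)}$ and so is compact in $Z$; consequently $\sup_{x\in A}|\la x,S'z'\ra|=\sup_{z\in\overline{S(A)}}|\la z,z'\ra|$ for all $z'\in Z'$. This identity says that every generating seminorm of $\beta(X',X)$ pulls back under $S'$ to a seminorm of the topology $\tau_c$ of uniform convergence on compact subsets of $Z$, whence $S'\colon(Z',\tau_c)\to X'_\beta$ is continuous. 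Now $B_{Z'}$ is equicontinuous (it is the polar of the $0$-neighbourhood $B_Z$ of $Z$), and on equicontinuous subsets of $Z'$ the weak$^*$ topology $\si(Z',Z)$ coincides with $\tau_c$ (here one uses completeness of $Z$, so that precompact and relatively compact subsets of $Z$ agree; cf.\ \cite[\S 21.6]{23}). Hence $S'$ restricts to a continuous map from $(B_{Z'},\si(Z',Z))$ into $X'_\beta$, and since $(B_{Z'},\si(Z',Z))$ is compact by the Alaoglu--Bourbaki theorem, $S'(B_{Z'})$ is compact, a fortiori relatively compact, in $X'_\beta$.

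I expect the main obstacle to be understanding why the hypothesis ``boundedly retractive'' cannot be dropped, i.e.\ why the naive approach fails. One would like to take $\cV:=K^\circ$ (a $0$-neighbourhood of $Y'_\beta$, as $K$ is bounded), observe that $T'(\cV)\su\cU^\circ$, which is $\si(X',X)$-compact by Alaoglu--Bourbaki, and then upgrade this to relative compactness in $X'_\beta$. The obstruction is that the weak$^*$ and strong topologies of $X'$ do \emph{not} in general coincide on equicontinuous subsets of $X'$, so no such upgrade is available. Passing first to a Banach step $Y_m$ of the inductive limit is precisely what removes this difficulty: there, the polar of the \emph{unit ball} of $Y_m$ is a genuine $0$-neighbourhood of $(Y_m)'_\beta$ that is at the same time equicontinuous and weak$^*$-compact, which is exactly what makes the transition from $\tau_c$ to the strong dual topology legitimate. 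The remaining points — that compact regularity yields $K$ compact in some $Y_m$, that the factor $S$ is continuous, and that $S'\circ j_m'$ is compact — are routine consequences of the definitions recalled in the Preliminaries together with the fact that compact linear maps are automatically continuous.
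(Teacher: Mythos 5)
Your proof is correct, and its first half is essentially the paper's: both use the compactness of $T$ to produce $K=\overline{T(\cU)}$ and then the retractivity hypothesis to conclude that $K$ (hence $T(X)$) sits compactly inside a single Banach step $Y_m$, so that $T=j_m S$ with $S\colon X\to Y_m$ compact and everything reduces, via the ideal property of compact operators, to the compactness of $S'\colon (Y_m)'_\beta\to X'_\beta$. (Whether you invoke Neus' equivalence with compact regularity or, as the paper does, the definition of bounded retractivity directly, is immaterial.) The second half is where you genuinely diverge. The paper pushes the reduction one step further on the \emph{domain} side: it factors $S$ through the local normed space $X_p=X/\Ker p$ associated with $\cU$, notes that the induced operator $X_p\to Y_m$ is compact because the quotient map $Q$ is open, applies the classical Schauder theorem there, and reassembles $T'$ as a composition of that compact dual with the continuous maps $Q'$ and $j_m'$. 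You instead prove the needed ``one-sided Schauder theorem'' (compact $S\colon X\to Z$ with $Z$ Banach implies $S'\colon Z'_\beta\to X'_\beta$ compact) directly: the identity $\sup_{x\in A}|\la x,S'z'\ra|=\sup_{z\in\overline{S(A)}}|\la z,z'\ra|$, valid because $\overline{S(A)}$ is compact for every bounded $A\su X$, makes $S'$ continuous from $(Z',\tau_c)$ into $X'_\beta$; since $\si(Z',Z)$ agrees with $\tau_c$ on the equicontinuous set $B_{Z'}$ (completeness of $Z$ turning precompact into relatively compact sets), Alaoglu--Bourbaki yields compactness of $S'(B_{Z'})$ in $X'_\beta$. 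Both routes are sound; yours avoids the quotient construction and the openness of $Q$ at the price of reproving a Schauder-type statement by hand, while the paper's is shorter modulo the cited reference. Your closing remark about why one cannot simply polarize $K$ in $Y'_\beta$ correctly isolates the point at which the Banach step is indispensable.
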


\begin{proof}
The compactness of $T$ implies that there exists a closed,  absolutely convex neighbourhood $\cU$ of $0$ in $X$ such that $T(\cU)$ is a relatively compact set in $Y$. So, the closure $B:=\overline{T(\cU)}\in \cB(Y)$ of $T(\cU)$ is a compact set in $Y$. But,  $Y$ is a boundedly retractive (LB)-space. Accordingly, there exists $k\in\N$ such that $B$ is contained  and bounded in $Y_k$, and $Y$ and $Y_k$ induce the same topology on $B$. Therefore, $B$ is also a compact set in $Y_k$ and $T(X)\subseteq Y_k$.  Accordingly, the operator $T$ acts compactly from $X$ into $Y_k$. Denote by $T_1$ the operator $T$ when interpreted to be acting from $X$ into $Y_k$ and by $i_k$ the continuous inclusion of $Y_k$ into $Y$. So, $T_1\in\cL(X,Y_k)$ is compact and $T=i_kT_1$. Denote by $p$ the continuous seminorm on $X$ corresponding to $\cU$  and let  $X_p$ denote the normed quotient space  $\left(\frac{X}{\Ker p}, p\right)$. Then there exists a unique continuous linear operator $S$ from $X_p$ into $Y_k$ such that $SQ=T_1$, where $Q$ denotes the canonical quotient map from $X$ into $X_p$ and hence, is an open map. Since $T_1\in \cL(X,Y_k)$ is  compact and $Q\in \cL(X,X_p)$ is open, the operator $S\in \cL(X_p, Y_k)$ is necessarily compact. By Schauder's theorem,  \cite[\S 42(7), p.202]{24}, it follows that $S'\in\cL(Y'_k, X_p')$ is compact. So, $T'_1=Q'S'\in \cL(Y'_k,X'_\beta)$ is compact  and hence, $T'=T'_1i_k'\in \cL(Y'_\beta, X'_\beta)$ is compact (cf. Proposition 17.1.1 in \cite{J}). This completes the proof.
\end{proof}

A Fr\'echet space $X$ is said to be \textit{quasinormable} if for every neighbourhhod $\cU$ of $0$ in $X$ there exists a neighbourhhod $\cV$ of $0$ in $X$ so that, for every $\varepsilon> 0$, there exists $B\in \cB(X)$ satisfying $\cV\subseteq B+\varepsilon \cU$. Thus, every  Fr\'echet-Schwartz space is quasinormable \cite[Remark, p.313]{MV}. The strong dual $X'_\beta$ of a quasinormable Fr\'echet space $X$ is necessarily a boundedly retractive (LB)-space \cite[Theorem]{Bo}. Thus, the strong dual of any Fr\'echet-Schwartz  space (briefly, (DFS)-space) is a boundedly retractive (LB)-space.

\begin{corollary}\label{C-nuovo} Let $X$ and $Y$ be two Fr\'echet spaces and  $T\in \cL(X,Y)$. If $T''\in \cL(X''_\beta, Y''_\beta)$ is compact, then $T$ is compact.
	
	If, in addition,  $X$ is quasinormable and  $T$ is compact, then $T''\in \cL(X''_\beta, Y''_\beta)$ is compact.
	\end{corollary}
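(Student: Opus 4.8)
The plan is to work throughout with the canonical maps $J_X\colon X\to X''_\beta$ and $J_Y\colon Y\to Y''_\beta$ -- topological embeddings, since Fr\'echet spaces are barrelled -- and with the identity $T''J_X=J_YT$, which is immediate from the definitions of the dual operators. Each implication is then a short argument with polars.

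For ``$T''$ compact $\Rightarrow$ $T$ compact'' I would proceed as follows, using only the completeness of $Y$. Pick a neighbourhood $\cV$ of $0$ in $X''_\beta$ with $C:=\ov{T''(\cV)}$ compact in $Y''_\beta$, and set $\cU:=J_X^{-1}(\cV)$, a neighbourhood of $0$ in $X$ by continuity of $J_X$; then $J_X(\cU)\su\cV$, so by $T''J_X=J_YT$,
\[
J_Y\big(T(\cU)\big)=T''\big(J_X(\cU)\big)\su C ,
\]
whence $\ov{J_Y(T(\cU))}^{\,Y''_\beta}$ is a closed subset of $C$ and so compact in $Y''_\beta$. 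The point I would then invoke is that $J_Y(Y)$ is \emph{closed} in $Y''_\beta$: being a topological embedding of the complete space $Y$, it realizes $Y$ as a complete -- hence closed -- subspace of the Hausdorff space $Y''_\beta$. Thus $\ov{J_Y(T(\cU))}^{\,Y''_\beta}\su J_Y(Y)$, and pulling this compact set back through the homeomorphism $J_Y^{-1}$ exhibits $T(\cU)$ inside a compact subset of $Y$; as $\cU$ is a $0$-neighbourhood of $X$, $T$ is compact.

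For the converse I would fix an increasing fundamental sequence $(p_n)_n$ of continuous seminorms of $X$ and the associated basic $0$-neighbourhoods $\cU_n:=\{x\in X:\,p_n(x)\le 1\}$. The first step is to observe that each bipolar $\cU_n^{\circ\circ}$, formed in the duality $\langle X',X''\rangle$, is a $0$-neighbourhood of $X''_\beta$: indeed $\cU_n^\circ$ is equicontinuous, hence bounded in $X'_\beta$, so its polar in $X''=(X'_\beta)'$ is a $0$-neighbourhood of $X''_\beta$; and by the bipolar theorem $\cU_n^{\circ\circ}=\ov{J_X(\cU_n)}^{\,\si(X'',X')}$. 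Next, using compactness of $T$, I would fix $n$ with $K:=\ov{T(\cU_n)}^{\,Y}$ compact in $Y$. Since $T''$ is the transpose of $T'\colon Y'_\beta\to X'_\beta$ it is continuous from $(X'',\si(X'',X'))$ into $(Y'',\si(Y'',Y'))$, and $T''J_X=J_YT$, so
\[
T''\big(\cU_n^{\circ\circ}\big)\su\ov{T''(J_X(\cU_n))}^{\,\si(Y'',Y')}=\ov{J_Y(T(\cU_n))}^{\,\si(Y'',Y')}\su\ov{J_Y(K)}^{\,\si(Y'',Y')}.
\]
Finally $J_Y(K)$, being a continuous image of a compact set, is compact in $Y''_\beta$, hence compact and so closed in the coarser Hausdorff topology $\si(Y'',Y')$; therefore the right-hand side equals $J_Y(K)$, and $T''$ maps the $0$-neighbourhood $\cU_n^{\circ\circ}$ of $X''_\beta$ into the compact set $J_Y(K)$, i.e.\ $T''$ is compact. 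Alternatively, one may route this through Proposition \ref{Comp-dual}: quasinormability of $X$ makes $X'_\beta$ a boundedly retractive (LB)-space by the theorem of Bonet quoted above, so once $T'\colon Y'_\beta\to X'_\beta$ is shown compact -- by a Schauder-type argument, factoring $T$ through the Banach space spanned by the closed absolutely convex, hence compact, hull of $\ov{T(\cU_n)}$ and applying Ascoli's theorem -- Proposition \ref{Comp-dual} applied to $T'$ gives that $T''=(T')'$ is compact.

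The hard part, in both directions, is controlling the passage to the strong bidual rather than the polar bookkeeping. In the first implication it is the completeness argument that keeps $\ov{J_Y(T(\cU))}$ inside $J_Y(Y)$, so that the compact set it produces actually sits in $Y$. In the second it is pinpointing a workable $0$-neighbourhood of $X''_\beta$ -- the bipolar $\cU_n^{\circ\circ}$ -- and checking that $T''$ compresses it into $J_Y(K)$; in the alternative route the same difficulty resurfaces as the need for the boundedly retractive (LB)-structure of $X'_\beta$ (which is exactly where the quasinormability of $X$ enters) together with a Schauder-type estimate. Once these points are in hand, the remainder is routine manipulation with polars and with $T''J_X=J_YT$.
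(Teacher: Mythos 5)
Your proof is correct. For the first implication you give, in expanded form, exactly the paper's argument: a Fr\'echet space sits inside its strong bidual as a closed topological subspace (barrelledness gives that $J_Y$ is an embedding, completeness that its image is closed), the restriction of $T''$ to $X$ is $T$ with values in $Y$, and compactness passes down; nothing further to say there. For the second implication your two routes diverge from each other. The ``alternative'' route you sketch at the end is precisely the paper's proof: quasinormability of $X$ makes $X'_\beta$ a boundedly retractive (LB)-space, $T'\colon Y'_\beta\to X'_\beta$ is compact because $Y$ is Fr\'echet (the paper simply cites \cite[Corollary 9.6.3]{Ed} rather than redoing the Ascoli-type argument), and Proposition \ref{Comp-dual} applied to $T'$ gives the compactness of $T''$. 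Your primary route is genuinely different: it is the classical Banach-space bipolar argument, $T''(\cU_n^{\circ\circ})\su\ov{J_Y(T(\cU_n))}^{\,\sigma(Y'',Y')}\su J_Y(K)$ with $K=\ov{T(\cU_n)}$ compact, transplanted to the Fr\'echet setting, and each step does survive: $\cU_n^{\circ}$ is equicontinuous, hence $\beta(X',X)$-bounded, so $\cU_n^{\circ\circ}$ is a $0$-neighbourhood of $X''_\beta$ and coincides with the $\sigma(X'',X')$-closure of $J_X(\cU_n)$ by the bipolar theorem; $T''$ is $\sigma(X'',X')$-$\sigma(Y'',Y')$ continuous as the transpose of $T'$; and $J_Y(K)$ is $\beta(Y'',Y')$-compact, hence $\sigma(Y'',Y')$-compact and therefore $\sigma(Y'',Y')$-closed. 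What this buys is worth emphasizing: your direct argument never uses quasinormability, so (assuming no oversight I have not detected) the hypothesis ``$X$ quasinormable'' in the second statement appears to be an artifact of routing the proof through $T'$ and Proposition \ref{Comp-dual}, and could be dropped; the paper's route, by contrast, only needs the already-established Proposition \ref{Comp-dual} plus a citation, at the cost of the extra hypothesis.
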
 

\begin{proof} Suppose that $T''\in \cL(X''_\beta, Y''_\beta)$ is compact. Since $X$, $Y$ are Fr\'echet spaces, they are isomorphic to their respective natural image in $X''_\beta$, $Y''_\beta$ (in which they are closed subspaces). Moreover, the restriction of $T''$ to $X$ coincides with $T$ and takes its values in $Y\subseteq Y''_\beta$. Then the compactness of $T$ follows from that of $T''$.
	
	Suppose that $X$ is quasinormable and that $T\in \cL(X,Y)$ is compact. Since $X$ is quasinormable, its strong dual $X'_\beta$ is a boundedly retractive (LB)-space. Moreover, $Y$ being a Fr\'echet space implies that $T'\colon Y'_\beta\to X'_\beta$ is compact, \cite[Corollary 9.6.3]{Ed}. It follows from Proposition \ref{Comp-dual}, with $Y'_\beta$ in place of $X$ and $X'_\beta$ in place of $Y=\ind_k Y_k$ and $T'$ in place of $T$, that $T''\in \cL(X''_\beta, Y''_\beta)$ is compact. 
\end{proof}

To identify the spectrum of $C_t$ acting in the (LB)-spaces arising in this paper we will require the following two results; the first one, i.e.  Lemma \ref{LB-Op}, is a direct consequence of Grothendieck's factorization theorem (see e.g. \cite[Theorem 24.33]{MV}), and the second one, i.e. Lemma \ref{Sp-LB}, is proved in \cite[Lemma 5.2]{ABR5}.

\begin{lemma}\label{LB-Op} Let $X=\ind_n X_n$ and $Y=\ind_mY_m$ be two (LB)-spaces with  increasing unions of  Banach spaces $X=\cup_{n=1}^\infty X_n$ and $Y=\cup_{m=1}^\infty Y_m$. Let $T\colon X\to Y$ be a linear map. Then $T$ is continuous (i.e., $T\in \cL(X,Y)$) if and only if for each $n\in\N$ there exists $m\in\N$ such that $T(X_n)\su Y_m$ and the restriction $T\colon X_n\to Y_m$ is continuous.
	\end{lemma}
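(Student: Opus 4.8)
The plan is to prove the two implications separately; the nontrivial one is an immediate application of Grothendieck's factorization theorem, so I will be brief.

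\textbf{Sufficiency.} Suppose that for each $n\in\N$ there is $m=m(n)\in\N$ with $T(X_n)\su Y_m$ such that the restriction $T\colon X_n\to Y_m$ is continuous. By the definition of an (LB)-space the natural inclusion $Y_m\su Y$ is continuous, hence the composition of $T\colon X_n\to Y_m$ with this inclusion is continuous; that is, $T\colon X_n\to Y$ is continuous for every $n\in\N$. Since the topology of $X=\ind_nX_n$ is by definition the finest locally convex topology making all the inclusions $X_n\su X$ continuous, a linear map out of $X$ is continuous as soon as its restriction to each $X_n$ is continuous. Therefore $T\in\cL(X,Y)$.

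\textbf{Necessity.} Conversely, assume $T\in\cL(X,Y)$ and fix $n\in\N$. The inclusion $j_n\colon X_n\to X$ is continuous, so $T\circ j_n\colon X_n\to Y$ is a continuous linear map from the Banach space $X_n$ into the (LB)-space $Y=\ind_mY_m$. A Banach space is a Baire space, so Grothendieck's factorization theorem in the form of \cite[Theorem 24.33]{MV} applies and yields an index $m\in\N$ such that $(T\circ j_n)(X_n)\su Y_m$ and $T\circ j_n\colon X_n\to Y_m$ is continuous. Since $j_n$ is simply the inclusion map, this is exactly the assertion that $T(X_n)\su Y_m$ and that the restriction $T\colon X_n\to Y_m$ is continuous.

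The only point requiring care is the use of the factorization theorem in the necessity part: one must check that its hypotheses hold (here the domain $X_n$ is Banach, hence Baire, which suffices) and, more importantly, that its conclusion delivers \emph{continuity} of the map into the single Banach step $Y_m$, not merely the set-theoretic containment $T(X_n)\su Y_m$ — the latter being strictly weaker, since in general the norm topology of $Y_m$ need not coincide with the topology induced on $Y_m$ by $Y$. Apart from this, there is no genuine obstacle.
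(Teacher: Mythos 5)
Your proof is correct and follows exactly the route the paper intends: the paper gives no written proof but states that the lemma "is a direct consequence of Grothendieck's factorization theorem (see e.g. \cite[Theorem 24.33]{MV})", which is precisely your necessity argument, while your sufficiency direction is the standard universal property of the locally convex inductive limit topology.
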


\begin{lemma}\label{Sp-LB} Let $X=\ind_k X_k$ be a Hausdorff inductive limit of a sequence of Banach spaces $(X_k,\|\cdot\|_k)$. Let $T\in \cL(X)$ satisfy the following condition:
	
	(A\'\,) For each $k\in\N$ the restriction $T_k$ of $T$ to $X_k$ maps $X_k$ into itself and $T_k\in \cL(X_k)$.
	
	Then the following properties are satisfied.
	\begin{itemize}
		\item[\rm (i)] $\sigma_{pt}(T;X)=\cup_{k=1}^\infty\sigma_{pt}(T_k,X_k)$.
		\item[\rm (ii)] If $\cup_{k=m}^\infty\sigma(T_k;X_k)\su \overline{\sigma(T;X)}$ for some $m\in\N$, then $\sigma^*(T;X)=\overline{\sigma(T;X)}$.
		\item[\rm (iii)] $\sigma(T;X)\su \cap_{m\in\N}\left(\cup_{n=m}^\infty\sigma(T_n;X_n)\right)$.
	\end{itemize}
	\end{lemma}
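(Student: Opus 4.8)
The plan is to prove the three parts separately: (i) is essentially formal, (iii) reduces to the Grothendieck-type continuity criterion of Lemma \ref{LB-Op}, and (ii) carries the one genuinely delicate point. For (i) I would argue by two inclusions. If $\lambda\in\sigma_{pt}(T_k;X_k)$, choose $x\in X_k\setminus\{0\}$ with $(\lambda I-T_k)x=0$; since $X_k\su X$ is an injective inclusion and $T|_{X_k}=T_k$ by (A$'$), the vector $x$ is a nonzero eigenvector of $T$ in $X$, so $\lambda\in\sigma_{pt}(T;X)$. Conversely, if $(\lambda I-T)x=0$ with $x\in X\setminus\{0\}$, pick $k$ with $x\in X_k$ (possible since $X=\cup_kX_k$); then $x\ne0$ in $X_k$ and, as $\lambda x$ and $Tx=T_kx$ both lie in $X_k$, we obtain $(\lambda I-T_k)x=0$ in $X_k$, i.e. $\lambda\in\sigma_{pt}(T_k;X_k)$. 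I would also record the monotonicity $\sigma_{pt}(T_k;X_k)\su\sigma_{pt}(T_{k+1};X_{k+1})$ (because $T_{k+1}|_{X_k}=T_k$ and $X_k\su X_{k+1}$), to be used in (iii).

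For (iii), it is equivalent to prove $\bigcup_{m}\bigcap_{n\ge m}\rho(T_n;X_n)\su\rho(T;X)$, so fix $\lambda$ and $m$ with $\lambda\in\rho(T_n;X_n)$ for all $n\ge m$. Injectivity of $\lambda I-T$ follows from (i) and the monotonicity just noted: a nonzero eigenvector would force $\lambda\in\sigma_{pt}(T_n;X_n)\su\sigma(T_n;X_n)$ for all large $n$. For surjectivity, given $y\in X$ choose $n\ge m$ with $y\in X_n$; then $R(\lambda,T_n)y\in X_n\su X$ is a preimage. To obtain $S:=(\lambda I-T)^{-1}\in\cL(X)$, note that $S|_{X_n}=R(\lambda,T_n)\in\cL(X_n)$ for $n\ge m$ (if $y\in X_n$ then $R(\lambda,T_n)y\in X_n$ and $(\lambda I-T)R(\lambda,T_n)y=y$), while for $n<m$ the continuous inclusion $X_n\su X_m$ gives that $S|_{X_n}=R(\lambda,T_m)|_{X_n}\colon X_n\to X_m$ is continuous. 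Hence $S$ satisfies the hypothesis of Lemma \ref{LB-Op} with $Y=X$, so $S\in\cL(X)$ and $\lambda\in\rho(T;X)$.

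For (ii), since $\sigma(T;X)\su\sigma^*(T;X)$ and $\sigma^*(T;X)$ is closed, only $\sigma^*(T;X)\su\overline{\sigma(T;X)}$ needs proof, i.e. $\C\setminus\overline{\sigma(T;X)}\su\rho^*(T;X)$. Fix $\lambda_0\notin\overline{\sigma(T;X)}$ and $\delta>0$ with $\overline{B(\lambda_0,\delta)}\cap\overline{\sigma(T;X)}=\es$; by (iii) and the hypothesis, $\overline{B(\lambda_0,\delta)}$ lies in $\rho(T;X)$ and in $\rho(T_n;X_n)$ for all $n\ge m$. It remains to check that $\cH:=\{R(\mu,T):\mu\in B(\lambda_0,\delta)\}$ is equicontinuous in $\cL(X)$. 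The key observation is that a convex balanced subset of $X=\ind_kX_k$ is a $0$-neighbourhood precisely when its trace on each $X_k$ is one; consequently $\cH$ is equicontinuous in $\cL(X)$ as soon as $\{S|_{X_k}:S\in\cH\}$ is equicontinuous in $\cL(X_k,X)$ for each fixed $k$ \emph{separately} --- no bound uniform in $k$ is needed. For $k\ge m$ one has $R(\mu,T)|_{X_k}=R(\mu,T_k)$ on $B(\lambda_0,\delta)$, and $\mu\mapsto R(\mu,T_k)$ is continuous, hence norm bounded, on the compact disc $\overline{B(\lambda_0,\delta)}\su\rho(T_k;X_k)$; a norm bounded subset of $\cL(X_k)$ is equicontinuous, and composing with the continuous inclusion $X_k\su X$ keeps it so. For $k<m$ the same argument applies with $X_m$ in place of $X_k$, using $R(\mu,T)|_{X_k}=R(\mu,T_m)|_{X_k}$ and $X_k\su X_m\su X$. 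Hence $\lambda_0\in\rho^*(T;X)$.

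I expect the main obstacle to be precisely this equicontinuity step in (ii): one has to exploit the description of $0$-neighbourhoods in an (LB)-space in order to test equicontinuity in $\cL(X)$ one Banach step $X_k$ at a time, after which the elementary local boundedness of the resolvent on a compact disc does the rest. The remaining ingredients --- the two inclusions in (i), the bijectivity of $\lambda I-T$ in (iii), and the appeal to the Grothendieck-type criterion (Lemma \ref{LB-Op}) for continuity of $(\lambda I-T)^{-1}$ --- are routine.
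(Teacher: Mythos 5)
Your proof is correct. Note that the paper does not prove this lemma at all --- it is imported verbatim from \cite[Lemma 5.2]{ABR5} --- and your argument for (i) and (iii) is essentially the standard one: the two elementary inclusions for the point spectrum, and for (iii) the construction of $(\lambda I-T)^{-1}$ stepwise on each $X_n$ followed by the universal property of the inductive limit topology (your appeal to Lemma \ref{LB-Op}) to get continuity. For the delicate step in (ii) you test equicontinuity of $\{R(\mu,T)\}$ against the convex balanced $0$-neighbourhoods of $\ind_k X_k$ one Banach step at a time, using that $R(\mu,T)|_{X_k}=R(\mu,T_k)$ is uniformly norm bounded on the compact disc $\ov{B(\lambda_0,\delta)}\su\rho(T_k;X_k)$; this is sound, and correctly exploits that no bound uniform in $k$ is required. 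A marginally shorter route to the same conclusion, in the spirit of how the authors handle equicontinuity elsewhere in the paper (e.g.\ in the proof of Theorem \ref{Sp-omega}), is to observe that for each fixed $x\in X_k$ ($k\geq m$) the orbit $\{R(\mu,T)x=R(\mu,T_k)x:\mu\in B(\lambda_0,\delta)\}$ is bounded in $X_k$, hence in $X$, so the family is bounded in $\cL_s(X)$ and equicontinuity follows from the barrelledness of the (LB)-space $X$; both arguments are equally valid.
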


Another useful fact for our study is the following result.

\begin{lemma}\label{C-incl} Let $T\in \cL(\omega)$. Let $X$ be a Fr\'echet space or an (LB)-space continuously included in $\omega$. If $T(X)\su X$, then $T\in \cL(X)$.
\end{lemma}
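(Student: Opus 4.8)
The plan is to use the closed graph theorem for the relevant class of spaces. First I would recall that $\omega = \C^{\N_0}$, with the product topology, is a Fr\'echet space whose topology is generated by the seminorms $x \mapsto |x_n|$ for $n \in \N_0$, and that the continuous inclusion $X \hookrightarrow \omega$ means precisely that each coordinate functional $x \mapsto x_n$ is continuous on $X$. Thus the map $T|_X \colon X \to X$ (well-defined by hypothesis $T(X) \subseteq X$) has the property that, for each $n$, the composition $x \mapsto (T|_X x)_n = (Tx)_n$ is continuous on $X$: indeed $x \mapsto x$ is continuous from $X$ into $\omega$, $T \in \cL(\omega)$, and the $n$-th coordinate is continuous on $\omega$, hence continuous when restricted to the image of $X$.

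The main step is to invoke a closed graph theorem. When $X$ is a Fr\'echet space, $T|_X \colon X \to X$ is a linear map between Fr\'echet spaces, and its graph is closed: if $x^{(j)} \to x$ in $X$ and $T|_X x^{(j)} \to y$ in $X$, then both convergences hold in $\omega$ (by continuity of $X \hookrightarrow \omega$), and since $T \in \cL(\omega)$ we get $T x^{(j)} \to Tx$ in $\omega$, whence $y = Tx = T|_X x$ (coordinatewise equality in $\omega$ forces equality, and $y \in X$). By the closed graph theorem for Fr\'echet spaces, $T|_X \in \cL(X)$. When $X = \ind_k X_k$ is an (LB)-space, $X$ is ultrabornological (a countable inductive limit of Banach spaces) and also webbed, so one applies the De Wilde closed graph theorem (valid for a linear map from an ultrabornological space, or more simply from an (LB)-space, into a webbed space such as an (LB)-space); the graph-closedness argument is identical to the Fr\'echet case, using that the coordinate functionals on $X$ are continuous because the inclusion into $\omega$ is continuous and each $X_k$ embeds continuously. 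In both cases we conclude $T \in \cL(X)$.

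I expect the only real obstacle to be making sure the closed graph theorem is actually applicable in the (LB)-case, i.e. that one genuinely has the right combination of barrelledness/bornologicity on the domain and a web on the target; this is standard (an (LB)-space is ultrabornological and webbed, and the De Wilde theorem covers linear maps between such spaces), so the argument goes through cleanly. It may be worth noting explicitly that $X$ being continuously included in $\omega$ is what makes the graph closed, since it reduces convergence testing to coordinatewise convergence where the known continuity of $T$ on $\omega$ can be exploited. No separate verification is needed that $T|_X$ maps $X$ into $X$, as that is precisely the hypothesis $T(X) \subseteq X$.
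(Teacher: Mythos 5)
Your proposal is correct and follows essentially the same route as the paper: both reduce the problem to the closed graph theorem of De Wilde type (valid since $X$, being a Fr\'echet space or an (LB)-space, is ultrabornological and webbed), and both verify closedness of the graph by pushing convergence into $\omega$ via the continuous inclusion and using $T\in\cL(\omega)$ there. The only cosmetic difference is that the paper runs the graph-closedness argument with nets and treats both cases uniformly, while you split into the Fr\'echet and (LB) cases; this changes nothing of substance.
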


\begin{proof} The result follows from the closed graph theorem, \cite[Theorem 24.31]{MV}, after recalling that $X$ is ultrabornological, \cite[Remark 24.15(c) \& Proposition 24.16]{MV} and has a web, \cite[Corollary 24.29 \& Remark 24.36]{MV}. So, it is enough to show that the graph of $T$ in $X$ is closed. To do this, we assume that a net $(x_\alpha)_\alpha\su X$ satisfies $x_\alpha \to x$ and $T(x_\alpha)\to y$ in $X$. Since the inclusion $X\su \omega$ is continuous, $x_\alpha\to x$ in $\omega$ and hence, $T(x_\alpha)\to T(x)$ in $\omega$. On the other hand, by the continuity of the inclusion  $X\su \omega$ also $T(x_\alpha)\to y$ in $\omega$. Then $T(x)=y$. So, $(x,y)$ belongs to the graph of $T$. This shows that  the graph of $T$ is closed.
\end{proof}

For $X$ a barrelled lcHs, every bounded subset of $\cL_s(X)$  is equicontinuous, \cite[Proposition 23.27]{MV}. It is known that every Fr\'echet space is barrelled, \cite[Remark, p.296]{MV}, and that every (LB)-space is barrelled, \cite[Proosition 24.16]{MV}.

The operator norm of a Banach space operator $T\in \cL(X,Y)$ will be denoted by $\|T\|_{X\to Y}$. The Banach spaces $\ell^p=\ell^p(\N_0)$, for $1\leq p<\infty$, with their standard norm $\|\cdot\|_p$ are classical. For $1<p<\infty$ these spaces are reflexive. The spectra of $C_t$ acting in such  spaces are given in the following result; see \cite{YD} for $1<p<\infty$ and also \cite[\S 8]{SEl-S} for $1\leq p<\infty$. Recall from Section 1 that 
$$
\Lambda:=\left\{\frac{1}{n+1}:\, n\in\N_0\right\}.
$$

\begin{prop}\label{Sp-spazilp} For each $t\in [0,1)$ the operator $C_t\in \cL(\ell^p)$, for $1\leq p<\infty$, is a compact operator satisfying
	\[
	\|C_t\|_{\ell^1\to\ell^1}=\frac{1}{t}\log\left(\frac{1}{1-t}\right),\quad t\in (0,1),
	\]
and
\[
\left(\sum_{n=0}^\infty \left(\frac{t^n}{n+1}\right)^p\right)^{1/p}\leq 	\|C_t\|_{\ell^p\to\ell^p}\leq \left(\frac{1}{t}\log\left(\frac{1}{1-t}\right)\right)^{1/p},\ 1<p<\infty,\ t\in (0,1),
\]	
with $\|C_0\|_{\ell^p\to\ell^p}=1$. Moreover,
\begin{equation}\label{Sp-lp}
	\sigma_{pt}(C_t;\ell^p)=\Lambda\ \hbox{ and }\ \sigma(C_t; \ell^p)=\Lambda\cup\{0\}.
	\end{equation}
	\end{prop}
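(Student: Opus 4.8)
The plan is to treat the three assertions in Proposition~\ref{Sp-spazilp} separately: the norm estimates, the compactness, and the spectral identities~\eqref{Sp-lp}. For the norm estimates and for $\|C_0\|_{\ell^p\to\ell^p}=1$, I would argue directly from the defining formula~\eqref{Ces-op}. Writing $(C_tx)_n=\frac{1}{n+1}\sum_{k=0}^n t^{n-k}x_k$, one sees that $C_t$ has a lower-triangular matrix $(a_{nk})$ with $a_{nk}=\frac{t^{n-k}}{n+1}$ for $k\le n$. For the $\ell^1$ norm one computes the supremum over columns $k$ of $\sum_{n\ge k}|a_{nk}|=\sum_{m\ge 0}\frac{t^m}{m+k+1}$, which is largest at $k=0$ and equals $\frac1t\sum_{m\ge1}\frac{t^m}{m}=\frac1t\log\frac1{1-t}$; this gives the exact value. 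For $1<p<\infty$ the upper bound follows from the Schur test: the row sums are $\sum_{k=0}^n\frac{t^{n-k}}{n+1}\le\frac1t\log\frac1{1-t}$ and the column sums are dominated by the same quantity, so interpolating (or applying the standard Schur lemma with weight one) yields $\|C_t\|_{\ell^p\to\ell^p}\le\bigl(\frac1t\log\frac1{1-t}\bigr)^{1/p}$ by combining the two bounds with exponents $1/p'$ and $1/p$. The lower bound is obtained by testing $C_t$ on the unit vector $e_0$: then $C_te_0=\bigl(\frac{t^n}{n+1}\bigr)_n$, whose $\ell^p$ norm is exactly $\bigl(\sum_n(\frac{t^n}{n+1})^p\bigr)^{1/p}$. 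Compactness: since $t\in[0,1)$, the tail columns of the matrix decay geometrically, so one approximates $C_t$ in operator norm by the finite-rank truncations $C_t^{(N)}$ obtained by keeping only the first $N$ rows; the error is controlled by $\sup_{n\ge N}(\text{row or column sums for }n\ge N)$, which tends to $0$ as $N\to\infty$. Hence $C_t$ is a norm limit of finite-rank operators, so compact. (Alternatively one may simply cite \cite{YD}, \cite{SEl-S}.)

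For the spectral statements, the cleanest route uses the factorization $C_t=D_\varphi R_t$ that the paper flags for the non-normable setting, but here in $\ell^p$ it is more direct to argue as follows. First, $0\in\sigma(C_t;\ell^p)$ because $C_t$ is compact on an infinite-dimensional Banach space. Next I would identify the point spectrum. Suppose $C_tx=\lambda x$ with $x\neq0$ and let $k$ be the smallest index with $x_k\neq0$. Comparing the $k$-th coordinates in $C_tx=\lambda x$ gives $\frac{1}{k+1}x_k=\lambda x_k$, hence $\lambda=\frac1{k+1}\in\Lambda$. Conversely, for $\lambda=\frac1{n+1}$ one solves the recursion coming from $C_tx=\lambda x$: once $x_n$ is prescribed (and $x_0=\dots=x_{n-1}=0$), each later coordinate $x_{n+j}$ is determined by the relation $\frac{1}{n+j+1}\sum_{k=n}^{n+j}t^{n+j-k}x_k=\frac1{n+1}x_{n+j}$, i.e. $\bigl(\frac1{n+1}-\frac1{n+j+1}\bigr)x_{n+j}=\frac1{n+j+1}\sum_{k=n}^{n+j-1}t^{n+j-k}x_k$; since the coefficient of $x_{n+j}$ is nonzero for $j\ge1$, this yields a unique eigenvector (up to scalar). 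I would then check that this solution lies in $\ell^p$: from the recursion one gets a bound of the form $|x_{n+j}|\le C\,\rho^{\,j}$ for some $\rho<1$ (because the factor $\frac{1}{n+j+1}/\bigl(\frac1{n+1}-\frac1{n+j+1}\bigr)=\frac{n+1}{j}$ times the geometric weight $t$ produces summable growth — more carefully, the generating-function identity for $x=\sum x_{n+j}e_{n+j}$ shows $x$ decays polynomially-times-geometrically), so $x\in\ell^p$. This proves $\sigma_{pt}(C_t;\ell^p)=\Lambda$ and that each eigenvalue is simple.

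Finally, to get $\sigma(C_t;\ell^p)=\Lambda\cup\{0\}$, I combine compactness with the point-spectrum computation: by Riesz--Schauder theory for compact operators on a Banach space, every nonzero point of $\sigma(C_t;\ell^p)$ is an eigenvalue, so $\sigma(C_t;\ell^p)\setminus\{0\}=\sigma_{pt}(C_t;\ell^p)=\Lambda$; together with $0\in\sigma(C_t;\ell^p)$ this gives $\sigma(C_t;\ell^p)=\Lambda\cup\{0\}$, and the set is closed as required since $\Lambda\cup\{0\}$ is compact with $0$ its only accumulation point. The main obstacle I expect is the verification that the formally-constructed eigenvectors for $\lambda=\frac1{n+1}$ actually belong to $\ell^p$ (and, for the constant $1$ in $\|C_0\|_{\ell^p\to\ell^p}$, that the diagonal operator $D_\varphi$ has norm exactly $1$, which is immediate since $\varphi_0=1$ and $\varphi$ is decreasing); the rest is bookkeeping with the triangular matrix and the standard spectral theory of compact operators.
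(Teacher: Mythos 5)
The paper does not prove this proposition at all: it is quoted from the literature, with the norm estimates and spectra taken from \cite{YD} and \cite[\S 8]{SEl-S}, and the eigenvectors supplied later by Lemma \ref{L-3-3} (itself quoted from \cite{CR4}), which exhibits $x^{[m]}\in d_1\subseteq\ell^1\subseteq\ell^p$ explicitly. So your reconstruction is not in competition with an argument in the text; judged on its own, its overall architecture (column sums for $\ell^1$, Schur test for $\ell^p$, finite-rank truncation for compactness, leading-coordinate argument plus Riesz--Schauder for the spectra) is the standard and correct one.

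There is, however, one step that fails as written. In the Schur test you bound \emph{both} the row sums and the column sums by $B:=\frac1t\log\frac1{1-t}$ and then claim the conclusion $\|C_t\|_{\ell^p\to\ell^p}\le B^{1/p}$ ``by combining the two bounds with exponents $1/p'$ and $1/p$''. But the Schur lemma with row-sum bound $A$ and column-sum bound $B$ yields $A^{1/p'}B^{1/p}$, and with $A=B$ this is $B^{1/p'+1/p}=B$, not $B^{1/p}$ (and $B\ge1$, so this is strictly weaker). To obtain the stated bound you must use the sharper row-sum estimate
\begin{equation*}
\sum_{k=0}^{n}\frac{t^{n-k}}{n+1}=\frac{1+t+\cdots+t^{n}}{n+1}\le 1,\qquad n\in\N_0,
\end{equation*}
so that $A=1$ and $A^{1/p'}B^{1/p}=B^{1/p}$. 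The same refinement is what makes your compactness argument quantitative: for the tail operator supported on rows $n\ge N$ both the row sums and the column sums are $O\bigl(\tfrac{1}{(N+1)(1-t)}\bigr)$, which is the estimate actually driving the norm convergence of the truncations. A second, smaller soft spot is the verification that the formal eigenvector for $\lambda=\frac1{n+1}$ lies in $\ell^p$; your recursion bound ``$|x_{n+j}|\le C\rho^j$'' is correct but not derived, and the cleanest route is to use the closed form $x^{[n]}_{n+j}=\binom{n+j}{j}t^j$ from \eqref{eigenvalue}, which is polynomial-in-$j$ times $t^j$ and hence summable to any power; this is exactly what Lemma \ref{L-3-3}(ii) and Remark \ref{Point-Sp} record.
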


Concerning the classical Cesàro operator $C_1$ (c.f. \eqref{Ces-1}) in $\cL(\ell^p)$ we have the following result.

\begin{prop}\label{Sp-C1-lp} Let $1<p<\infty$. 
	\begin{itemize}
		\item[\rm (i)] The operator $C_1\in \cL(\ell^p)$ with $\|C_1\|_{\ell^p\to\ell^p}=p'$, where $\frac{1}{p}+\frac{1}{p'}=1$.
		\item[\rm (ii)] The spectra of $C_1$ are given by 
		\[
		\sigma_{pt}(C_1;\ell^p)=\emptyset \ \mbox{ and } \ \sigma(C_1;\ell^p)=\left\{z\in\C\,:\, \left|z-\frac{p'}{2}\right|\leq \frac{p'}{2}\right\}.
		\]
	\end{itemize}
Moreover, the range $(C_1-zI)(\ell^p)$ is not dense in $\ell^p$ whenever $|z-\frac{p'}{2}|<\frac{p'}{2}$.
	\end{prop}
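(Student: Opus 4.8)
The plan is to prove the three assertions in turn; only the upper inclusion for the spectrum is substantial. For (i), the estimate $\|C_1x\|_p\le p'\|x\|_p$ is precisely the classical discrete Hardy inequality $\sum_{n\ge 0}\left(\frac1{n+1}\sum_{k=0}^n|x_k|\right)^p\le (p')^p\sum_{n\ge 0}|x_n|^p$; in particular $C_1(\ell^p)\subseteq\ell^p$ and $C_1\in\cL(\ell^p)$ with $\|C_1\|_{\ell^p\to\ell^p}\le p'$. To see that $p'$ is attained I would test $C_1$ on the truncations $x^{(N)}$ given by $x^{(N)}_k=(k+1)^{-1/p}$ for $k\le N$ and $x^{(N)}_k=0$ for $k>N$; a routine estimate gives $\|C_1x^{(N)}\|_p/\|x^{(N)}\|_p\to p'$ as $N\to\infty$, whence equality.

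For the point spectrum, suppose $C_1x=zx$ with $x\in\ell^p\setminus\{0\}$ and put $s_n:=\sum_{k=0}^nx_k$, $s_{-1}:=0$; then $\frac{s_n}{n+1}=z(s_n-s_{n-1})$, i.e. $(z(n+1)-1)s_n=z(n+1)s_{n-1}$ for all $n\ge0$. If $z=0$ this forces $s_n\equiv0$, and if $z\notin\Lambda\cup\{0\}$ it forces $s_0=0$ (since $z\ne1$) and inductively $s_n\equiv0$; in both cases $x=0$. If $z=1$ then $s_n=(n+1)s_0$, so $x_n\equiv s_0$, which lies in $\ell^p$ only if $x=0$. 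If $z=\frac1{m+1}\in\Lambda$ with $m\ge1$, then $s_0=\dots=s_{m-1}=0$ while $s_n=s_m\prod_{j=m+1}^n\frac{z(j+1)}{z(j+1)-1}$ for $n\ge m$, and this product grows like a positive power of $n$ (namely $n^{1/z}$), so $x_n=s_n-s_{n-1}$ does not tend to $0$ unless $s_m=0$, again forcing $x=0$. Hence $\sigma_{pt}(C_1;\ell^p)=\es$.

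Next I prove both $\{z:|z-p'/2|\le p'/2\}\subseteq\sigma(C_1;\ell^p)$ and the non-density statement by means of the dual operator. Identifying $(\ell^p)'=\ell^{p'}$ via the bilinear pairing, $C_1'\in\cL(\ell^{p'})$ is the transpose matrix operator $(C_1'y)_n=\sum_{k\ge n}\frac{y_k}{k+1}$. Solving $C_1'y=zy$ through $r_n:=\sum_{k\ge n}\frac{y_k}{k+1}=zy_n$ — so that $r_{n+1}=\frac{z(n+1)-1}{z(n+1)}r_n$ and $y_n=r_n/z$ — produces, for each $z$ in the open disc $\{z:|z-p'/2|<p'/2\}=\{z\ne0:\RR(1/z)>1/p'\}$, a nonzero solution $y$ with $|y_n|$ comparable to $n^{-\RR(1/z)}$ (and $y$ finitely supported when $z\in\Lambda$); by Stirling's formula, the condition $p'\RR(1/z)>1$ is exactly what places $y$ in $\ell^{p'}$. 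For such $z$ the operator $zI-C_1'$ is not injective on $(\ell^p)'$, so $(C_1-zI)(\ell^p)$ is not $\sigma(\ell^p,\ell^{p'})$-dense — hence not norm-dense — in $\ell^p$; in particular $z\in\sigma(C_1;\ell^p)$. Since $\sigma(C_1;\ell^p)$ is closed, the open disc being in it forces the closed disc to be in it too, and the last assertion of the Proposition is established.

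For the reverse inclusion fix $z\ne0$ with $\RR(1/z)<1/p'$ (equivalently $z$ outside the closed disc; such $z$ automatically avoid $\Lambda$). Solving $(zI-C_1)x=y$ via $s_n=\gamma_ny_n+\beta_ns_{n-1}$, $\gamma_n:=\frac{n+1}{z(n+1)-1}$, $\beta_n:=\frac{z(n+1)}{z(n+1)-1}$, followed by $x_n=s_n-s_{n-1}$, yields after telescoping the $\Gamma$-factors the explicit formula
\[
x_n=\gamma_ny_n+\sum_{k=0}^{n-1}K(n,k)\,y_k,\qquad K(n,k)=\frac1{z^2}\cdot\frac{n!\,\Gamma(k+1-1/z)}{\Gamma(n+2-1/z)\,k!}.
\]
Stirling's asymptotics for ratios of $\Gamma$-functions give $|K(n,k)|\le C_z\,\frac1n\,(n/k)^{\RR(1/z)}$ uniformly for $1\le k<n$, and the weighted Hardy operator with kernel $n^{\alpha-1}k^{-\alpha}$, $\alpha:=\RR(1/z)$, is bounded on $\ell^p$ precisely when $\alpha<1/p'$, which I would check by a Schur test with the power weights $k\mapsto k^{-1/(pp')}$. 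Hence the displayed formula defines $S_z\in\cL(\ell^p)$ with $(zI-C_1)S_z=I$, and since $zI-C_1$ is injective (empty point spectrum) it is a bijection, so $z\in\rho(C_1;\ell^p)$; combining the two inclusions gives $\sigma(C_1;\ell^p)=\{z:|z-p'/2|\le p'/2\}$. The main obstacle is this last paragraph: carrying out the $\Gamma$-function cancellations cleanly and then proving the sharp $\ell^p$-boundedness of the resulting weighted Hardy operator right up to — but not past — the critical exponent $\RR(1/z)=1/p'$; the rest reduces to a one-line recursion, the classical Hardy inequality, and a standard transpose argument.
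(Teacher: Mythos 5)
Your argument is correct; note that the paper itself offers no proof of this proposition, referring instead to \cite[Theorem 326]{HLP} for (i) and to \cite{G}, \cite{Le}, \cite{Rho} for (ii), so what you have written is essentially a self-contained reconstruction of the classical arguments in those references: Hardy's inequality with sharpness via the truncated sequences $(k+1)^{-1/p}$, Leibowitz's computation of the eigenvectors of the transpose $C_1'$ on $\ell^{p'}$ (which simultaneously yields the non-density of the range and the lower inclusion for the spectrum), and the explicit resolvent with $\Gamma$-function asymptotics for the upper inclusion. I checked the two points you flag as the main obstacles and they go through: your kernel formula $K(n,k)=z^{-2}\,n!\,\Gamma(k+1-1/z)/(k!\,\Gamma(n+2-1/z))$ is what the telescoped recursion produces, the $\Gamma$-ratios cause no trouble because $z$ outside the closed disc forces $\RR(1/z)<1/p'<1$, so $1/z$ is never a positive integer and no poles occur; and the Schur test with weight $k^{-s}$ requires exactly $\al/p<s<(1-\al)/p'$ with $\al=\RR(1/z)$, a nonempty window precisely when $\al<1/p'$, and your choice $s=1/(pp')$ lies in it. Two minor points worth making explicit in a write-up: the identity $\{z:|z-p'/2|<p'/2\}=\{z\neq 0:\RR(1/z)>1/p'\}$ that you use twice, and the observation that $(zI-C_1)S_z=I$ holds coordinatewise in $\omega$ for \emph{every} $y$ by construction of the recursion, so that once $S_z$ is bounded on $\ell^p$ the injectivity of $zI-C_1$ (from the empty point spectrum) upgrades the right inverse to a two-sided one.
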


For part (i) we refer to \cite[Theorem 326]{HLP} and for part (ii) see \cite{G}, \cite{Le}, \cite{Rho} and the references therein. In particular, $C_1$ is a \textit{not} a compact operator.

G. Bennett thoroughly investigated the discrete Cesàro spaces
\[
ces(p):=\{x\in\omega:\, C_1|x|\in\ell^p\},\quad 1<p<\infty,
\]
where $|x|:=(|x_n|)_{n\in\N_0}$, which satisfy $\ell^p\subseteq ces(p)$ continuously and are reflexive Banach spaces relative to the norm
\begin{equation}\label{eq.norm-ces}
	\|x\|_{ces(p)}:=\|C_1|x|\|_p,\quad x\in ces(p);
\end{equation}
see, for example, \cite{Be}, as well as \cite{AM,CR3,G-E,L-M} and the references therein. The following result, \cite[Proposition 5.6]{CR4} describes the spectra of $C_t$ acting in $ces(p)$.

\begin{prop}\label{Sp-spazices} Let $t\in [0,1)$ and $1<p<\infty$. The operator $C_t\in \cL(ces(p))$ is compact and satisfies
	\[
	\|C_t\|_{ces(p)\to ces(p)}\leq \min\left\{\frac{1}{1-t},\frac{p}{p-1}\right\}.
	\]
Moreover,
\begin{equation}\label{Sp-ces}
	\sigma_{pt}(C_t;ces(p))=\Lambda\ \hbox{ and }\ \sigma(C_t; ces(p))=\Lambda\cup\{0\}.
	\end{equation}
\end{prop}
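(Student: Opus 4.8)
The plan is to argue in three steps: first show that $C_t$ acts continuously on $ces(p)$ with the two stated norm bounds, then deduce compactness, and finally identify the spectra. For the first step the key elementary observation is the coordinatewise domination $|C_tx|\le C_t|x|\le C_1|x|$, valid for every $x\in\omega$, the second inequality because $0\le t\le1$ forces $t^j\le1$. Since $C_1$ has nonnegative matrix entries it is positivity preserving, hence monotone, and the lattice norm $\|y\|_{ces(p)}=\|C_1|y|\|_p$ is monotone as well; so for $x\in ces(p)$,
\[
\|C_tx\|_{ces(p)}=\|C_1|C_tx|\|_p\le\|C_1(C_1|x|)\|_p\le\|C_1\|_{\ell^p\to\ell^p}\,\|C_1|x|\|_p=p'\,\|x\|_{ces(p)},
\]
using Proposition \ref{Sp-C1-lp}(i) and the fact that $C_1|x|\in\ell^p$ by the very definition of $ces(p)$. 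For the competing bound $\tfrac{1}{1-t}$ I would use the factorization $C_t=D_\varphi R_t$, where $R_tx:=\bigl(\sum_{k=0}^n t^{n-k}x_k\bigr)_{n}$: interchanging the order of summation gives, for $a\ge0$, the estimate $(C_1R_ta)_m=\tfrac{1}{m+1}\sum_{k=0}^m a_k\sum_{n=k}^m t^{n-k}\le\tfrac{1}{1-t}(C_1a)_m$, whence $\|R_t\|_{ces(p)\to ces(p)}\le\tfrac{1}{1-t}$, while plainly $\|D_\varphi\|_{ces(p)\to ces(p)}\le1$ because $|D_\varphi x|\le|x|$ coordinatewise. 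Altogether $C_t=D_\varphi R_t\in\cL(ces(p))$ with $\|C_t\|_{ces(p)\to ces(p)}\le\min\{\tfrac{1}{1-t},p'\}$; continuity alone also follows from Lemma \ref{C-incl} once $C_t(ces(p))\subseteq ces(p)$ is known.

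For compactness it suffices, by the ideal property of compact operators in $\cL(ces(p))$ (Lemma \ref{Comp-Op}) and $R_t\in\cL(ces(p))$, to see that the diagonal operator $D_\varphi$ is compact on $ces(p)$: truncating $D_\varphi$ to its first $N+1$ entries yields a finite rank operator $D_\varphi^{(N)}$ with $|(D_\varphi-D_\varphi^{(N)})x|\le\tfrac{1}{N+2}|x|$ coordinatewise, hence $\|D_\varphi-D_\varphi^{(N)}\|_{ces(p)\to ces(p)}\le\tfrac{1}{N+2}\to0$, so $D_\varphi$, and therefore $C_t$, is compact. Next, the inclusion $\sigma_{pt}(C_t;ces(p))\subseteq\Lambda$ is immediate: if $C_tx=\lambda x$ with $x\ne0$ and $m$ is the least index with $x_m\ne0$, then the $m$-th coordinate of the eigenvalue equation reads $\tfrac{x_m}{m+1}=\lambda x_m$, forcing $\lambda=\tfrac{1}{m+1}\in\Lambda$. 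Conversely, Proposition \ref{Sp-spazilp} gives that every $\lambda\in\Lambda$ is an eigenvalue of $C_t$ on $\ell^p$; since $\ell^p\subseteq ces(p)$ continuously and both operators are given by the same formula \eqref{Ces-op}, the corresponding $\ell^p$-eigenvector (explicitly, $x_n=t^{n-m}\binom{n}{m}$ for $n\ge m$ and $x_n=0$ for $n<m$, when $\lambda=\tfrac{1}{m+1}$) is a nonzero eigenvector in $ces(p)$, so $\Lambda\subseteq\sigma_{pt}(C_t;ces(p))$. Hence $\sigma_{pt}(C_t;ces(p))=\Lambda$.

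Finally, $ces(p)$ is an infinite-dimensional Banach space and $C_t$ is compact, so Riesz--Schauder theory yields $\sigma(C_t;ces(p))\setminus\{0\}=\sigma_{pt}(C_t;ces(p))\setminus\{0\}=\Lambda$ (note $0\notin\Lambda$), while $0\in\sigma(C_t;ces(p))$ because a compact operator on an infinite-dimensional Banach space cannot be invertible (otherwise $I=C_t^{-1}C_t$ would be compact); therefore $\sigma(C_t;ces(p))=\Lambda\cup\{0\}$. The step I expect to require the most care is the very first one, namely verifying that $C_t$ maps $ces(p)$ boundedly into itself and extracting both competing norm bounds; once this and the factorization $C_t=D_\varphi R_t$ are in hand, the compactness is a one-line finite rank approximation and the spectral picture follows from the elementary first-nonzero-coordinate argument together with standard Riesz--Schauder theory.
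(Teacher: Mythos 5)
Your argument is correct. Note that the paper itself offers no proof of Proposition \ref{Sp-spazices}: it is quoted verbatim from \cite[Proposition 5.6]{CR4}, so there is no in-paper argument to compare against. That said, your route is entirely consistent with the machinery the paper builds for the non-normable cases: the factorization $C_t=D_\varphi R_t$ with $R_t=\sum_n t^nS^n$ is exactly the paper's ``main tool'' (Propositions \ref{P-4-2}, \ref{P-4-4}, \ref{P-5-2}), and deducing compactness of $C_t$ from compactness of $D_\varphi$ plus the ideal property (Lemma \ref{Comp-Op}) is precisely how Theorem \ref{Sp-F}(i) is proved. Your individual steps all check: the bound $p'$ follows from $|C_tx|\le C_1|x|$, positivity of $C_1$ and $\|C_1\|_{\ell^p\to\ell^p}=p'$; the bound $\tfrac1{1-t}$ from $C_1R_ta\le\tfrac1{1-t}C_1a$ for $a\ge0$ together with $\|D_\varphi\|\le1$; the finite-rank approximation of $D_\varphi$ is legitimate because $e_j\in ces(p)$ and the coordinate functionals are continuous (Lemma \ref{L-3-1}(iv)), so $D_\varphi^{(N)}$ is bounded of finite rank and $\|D_\varphi-D_\varphi^{(N)}\|\le\tfrac1{N+2}$ by the lattice property of the $ces(p)$ norm; the first-nonzero-coordinate argument gives $\sigma_{pt}\subseteq\Lambda$, the explicit eigenvector $x_n=\binom{n}{m}t^{n-m}$ (which is the paper's $x^{[m]}$ from \eqref{eigenvalue} and lies in $d_1\subseteq\ell^p\subseteq ces(p)$) gives the reverse inclusion, and Riesz--Schauder theory finishes the spectrum. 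The only cosmetic remark is that your appeal to Proposition \ref{Sp-spazilp} for $\Lambda\subseteq\sigma_{pt}(C_t;\ell^p)$ is slightly circular in spirit (that proposition is itself quoted from the literature), but since you also exhibit the eigenvector explicitly and verify it decays geometrically, the step is self-contained anyway.
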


The situation for $C_1\in \cL(ces(p))$ is quite different. Indeed, $\|C_1\|_{ces(p)\to ces(p)}=p'$ and the spectra are given by
\[
\sigma_{pt}(C_1;ces(p))=\emptyset\ \mbox{ and }\ \sigma(C_1; ces(p))=\left\{z\in \C\,:\, \left|z-\frac{p'}{2}\right|\leq \frac{p'}{2}\right\}
\]
for each $1<p<\infty$; see Theorem 5.1 and its proof in \cite{CR3}. In particular, $C_1$ is \textit{not} a compact operator.

The dual Banach spaces $(ces(p))'$, for $1<p<\infty$, are rather complicated, \cite{Ja}. A more transparent \textit{isomorphic} identification of $(ces(p))'$ is given in Corollary 12.17 of \cite{Be}. It is shown there that
\[
d_p:=\{x\in\ell^\infty:\, \hat{x}:=(\sup_{k\geq n}|x_k|)_{n\in\N_0}\in \ell^p\},\quad 1<p<\infty,
\]
is a Banach space for the norm
\begin{equation}\label{norma}
	\|x\|_{d_p}:=\|\hat{x}\|_p,\quad x\in d_p,
\end{equation}
which is isomorphic to $(ces(p'))'$, where $p'$ is the conjugate exponent of $p$. The sequence $\hat{x}$ is called the \textit{least decreasing majorant} of $x$. The duality is the natural one given by
\[
\langle w,x\rangle :=\sum_{n=0}^\infty w_nx_n,\quad w\in ces(p'),\ x\in d_p.
\]
In particular, $d_p$ is reflexive for each $1<p<\infty$. Since $|x|\leq |\hat{x}|$, it is clear that $\|x\|_p\leq \|\hat{x}\|_p=\|x\|_{d_p}$, for $x\in d_p$, that is, $d_p\subseteq \ell^p$ continuously. So, for all $1<p<\infty$, we have $d_p\subseteq \ell^p\subseteq ces(p)$ with continuous inclusions.
 The following result is Theorem 6.9 of \cite{CR4}.

\begin{prop}\label{Sp-spazidp}
	Let $t\in [0,1)$ and $1<p<\infty$. The operator $C_t\in \cL(d_p)$ is compact and satisfies
	\[
	\|C_t\|_{d_p\to d_p}\leq (1-t)^{-1-(1/p)}.
	\]
	Moreover,
	\begin{equation}\label{Sp-dp}
		\sigma_{pt}(C_t;d_p)=\Lambda\ \hbox{ and }\ \sigma(C_t; d_p)=\Lambda\cup\{0\}.
	\end{equation}
\end{prop}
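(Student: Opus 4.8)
The plan is to reduce the statement about $C_t$ acting in $d_p$ to the already-known spectral information for $C_t$ on $\ell^p$ (Proposition~\ref{Sp-spazilp}) together with the continuous inclusions $d_p\subseteq\ell^p\subseteq ces(p)$ and the reflexivity/duality $d_p\cong(ces(p'))'$. First I would establish the norm estimate $\|C_t\|_{d_p\to d_p}\le(1-t)^{-1-1/p}$ by a direct computation: writing $(C_tx)_n=\frac{1}{n+1}\sum_{k=0}^n t^{n-k}x_k$, one bounds $|(C_tx)_n|$ in terms of $\sup_{k\ge m}|x_k|$ for suitable $m$, and then estimates the least decreasing majorant $\widehat{C_tx}$ in $\ell^p$-norm. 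The key mechanism is that averaging with the geometric weights $t^{n-k}$ is dominated by $\frac{1}{1-t}$ times a sup, and the extra factor $(1-t)^{-1/p}$ comes from controlling how the majorant operation interacts with the tail sums; this kind of estimate is the one carried out in \cite[Theorem~6.9]{CR4}, so I would cite or reproduce it briefly.

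Next I would prove compactness of $C_t\colon d_p\to d_p$ for $t\in[0,1)$. The cleanest route is the factorization $C_t=D_\varphi R_t$ announced in the introduction: since $D_\varphi x=(x_n/(n+1))_n$ is a diagonal operator whose symbol $\varphi=(1/(n+1))_n$ tends to $0$, it is compact on $d_p$ (approximate $D_\varphi$ in operator norm by the finite-rank truncations $D_{\varphi}^{(N)}$ obtained by setting $\varphi_n=0$ for $n>N$, using that the $d_p$-norm behaves well under coordinate truncation), and $R_t\in\cL(d_p)$ by the norm bound just established applied to $R_t$ (or by Lemma~\ref{C-incl}, since $R_t\in\cL(\omega)$ and $R_t(d_p)\subseteq d_p$). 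Then Lemma~\ref{Comp-Op} gives that $C_t=D_\varphi R_t$ is compact. Alternatively, one can observe $d_p\hookrightarrow\ell^p$ continuously, $C_t$ maps $\ell^p$ compactly into $\ell^p$, and a short argument upgrades this to compactness on $d_p$; but the factorization argument is more self-contained and matches the paper's stated strategy.

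For the spectra, I would argue as follows. Compactness of $C_t$ on the infinite-dimensional Banach space $d_p$ forces $0\in\sigma(C_t;d_p)$ and $\sigma(C_t;d_p)\setminus\{0\}=\sigma_{pt}(C_t;d_p)$, a countable set accumulating only at $0$. For the point spectrum: if $C_tx=\lambda x$ with $x\in d_p\setminus\{0\}$, then since $d_p\subseteq\ell^p$ we also have $x\in\ell^p\setminus\{0\}$ and $C_tx=\lambda x$ there, so by \eqref{Sp-lp} necessarily $\lambda\in\Lambda$; hence $\sigma_{pt}(C_t;d_p)\subseteq\Lambda$. Conversely, for each $n\in\N_0$ the eigenvalue equation $C_tx=\frac{1}{n+1}x$ has a unique (up to scalar) solution $x^{(n)}\in\omega$ (this is the $1$-dimensional eigenspace identified in the $\omega$-analysis, Lemma~\ref{L-3-3}); I need to check that $x^{(n)}\in d_p$. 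The vector $x^{(n)}$ can be written down explicitly from the recursion $\frac{1}{k+1}\sum_{j=0}^k t^{k-j}x_j=\frac{1}{n+1}x_k$, and one sees that its entries eventually decay like a geometric/rational sequence fast enough to lie in $d_p$ — concretely, because $x^{(n)}$ already lies in $\ell^1\subseteq d_p$ (the eigenvectors for $C_t$ are finitely supported shifted by a tail that decays faster than any $\ell^p$ rate, as in the $\ell^p$ case). So $\Lambda\subseteq\sigma_{pt}(C_t;d_p)$, giving equality, and then $\sigma(C_t;d_p)=\Lambda\cup\{0\}$ since $\Lambda\cup\{0\}$ is already closed.

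The main obstacle I anticipate is the verification that the eigenvectors $x^{(n)}$ actually belong to $d_p$ — equivalently, that the least decreasing majorant $\widehat{x^{(n)}}$ is $p$-summable. This requires a careful look at the explicit form of $x^{(n)}$: one must confirm its asymptotic decay rate (governed by the parameter $t<1$ entering the recursion) dominates the majorant operation, rather than merely citing membership in some larger space. A secondary technical point is making the norm estimate $\|C_t\|_{d_p\to d_p}\le(1-t)^{-1-1/p}$ genuinely rigorous, since the interplay between the Cesàro-type averaging and the nonlinear majorant map $x\mapsto\widehat x$ is where the extra power of $(1-t)^{-1/p}$ is delicate; it is safest to lean on \cite[Theorem~6.9]{CR4} here. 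Everything else — compactness via the factorization, the point-spectrum inclusion via $d_p\subseteq\ell^p$, and the closedness of $\Lambda\cup\{0\}$ — is routine.
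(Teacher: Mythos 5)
The paper does not actually prove Proposition~\ref{Sp-spazidp}: it is quoted verbatim as Theorem~6.9 of \cite{CR4}. Your reconstruction is nevertheless essentially the right one, and it matches the strategy the paper itself deploys for the Fr\'echet and (LB) analogues in Sections~4--5: the factorization $C_t=D_\varphi R_t$ with $D_\varphi$ compact (your finite-rank truncation argument is valid, since $|\psi_n|\le M$ for all $n$ forces $\widehat{\psi x}\le M\hat x$ and hence $\|D_\psi\|_{d_p\to d_p}\le M$, so $\|D_\varphi-D_\varphi^{(N)}\|_{d_p\to d_p}\le 1/(N+2)$) and $R_t=\sum_n t^nS^n$ norm-convergent because $\|S^n\|_{d_p\to d_p}=(n+1)^{1/p}$ (the paper's \eqref{eq.4-11}) and $\sum_n t^n(n+1)^{1/p}<\infty$; then the point-spectrum inclusion via $d_p\subseteq\ell^p$ and Proposition~\ref{Sp-spazilp}, and the Riesz theory of compact operators for $\sigma=\sigma_{pt}\cup\{0\}$.

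One step as written is wrong, though you correctly flag it as the delicate point in your final paragraph: the inclusion $\ell^1\subseteq d_p$ is \emph{false}, so membership of the eigenvectors in $\ell^1$ does not by itself put them in $d_p$. For a counterexample, take $x$ supported on a sparse set, $x_{n_j}=2^{-j}$ with $n_j-n_{j-1}\ge 2^{jp}$; then $x\in\ell^1$ but $\|\hat x\|_p^p\ge\sum_j 2^{-jp}(n_j-n_{j-1})=\infty$. The correct route is the one the paper uses elsewhere: by Lemma~\ref{L-3-3}(ii) the eigenvector $x^{[m]}$ lies in $d_1$, and $d_1\subseteq d_p$ for all $1<p<\infty$ (monotonicity of the $\ell^p$-norms applied to $\hat x$); alternatively, a direct check that $x^{[m]}_n\sim c\,n^m t^n$ is eventually decreasing, so its least decreasing majorant is comparable to itself and is $p$-summable for every $p\ge1$. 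With that repair the argument is complete, modulo the norm estimate $\|C_t\|_{d_p\to d_p}\le(1-t)^{-1-1/p}$, which you sensibly delegate to \cite[Theorem~6.9]{CR4} exactly as the paper does.
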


Concerning the operator $C_1\in \cL(d_p)$, $1<p<\infty$, it is known that $\|C_1\|_{d_p\to d_p}=p'$ and that its spectra are given by
\[
\sigma_{pt}(C_1;d_p)=\emptyset\ \mbox{ and }\ \sigma(C_1; d_p)=\left\{z\in\C\,:\, \left|z-\frac{p'}{2}\right|\leq \frac{p'}{2}\right\};
\] 
see Proposition 3.2 and Corollary 3.5 in \cite{BR1}.

\section{The operators $C_t$ acting in $\omega$}

Given an element $x=(x_n)_{n\in\N_0}\in \omega$ we write $x\geq 0$ if $x=|x|=(|x_n|)_{n\in\N_0}$. By $x\leq z$ it is meant that $(z-x)\geq 0$. The sequence space $\omega$ is a non-normable Fr\'echet space for the Hausdorff locally convex topology of coordinatewise convergence, which is determined by the increasing sequence of seminorms
\begin{equation}\label{sem-omega}
	r_n(x):=\max_{0\leq j\leq n}|x_j|,\quad x\in\omega,
	\end{equation}
for each $n\in\N_0$. Observe that $r_n(x)=r_n(|x|)\leq r_n(|y|)=r_n(y)$ whenever $x,y\in\omega$ satisfy $|x|\leq |y|$. Let $e_n:=(\delta_{nj})_{j\in\N_0}$ for  each $n\in\N_0$ and set $\cE:=\{e_n:\, n\in\N_0\}$. It is clear from \eqref{Ces-op} that each $C_t\colon\omega\to\omega$ is a linear map which is represented by a lower triangular matrix with respect to the unconditional basis $\cE$ of $\omega$. Namely,
\begin{equation}\label{Matrix}
	C_t\simeq \left(\begin{array}{ccccc}
		1 & 0 & 0 & 0 & \cdots \\
		t/2 & 1/2 & 0 & 0 & \cdots \\
		t^2/3 & t/3 & 1/3 & 0 & \cdots \\
		t^3/4 & t^2/4 & t/4 & 1/4 & \cdots \\
		\vdots & \vdots & \vdots & \vdots & \\
	\end{array}\right)
\end{equation}
with main diagonal the positive, decreasing sequence given by
\begin{equation}\label{dia}
\varphi:=\left(\frac{1}{n+1}\right)_{n\in\N_0}\in c_0.
\end{equation}
The following properties of $C_t$ are recorded in \cite[Lemma 2.1]{CR4}, except for part (iv).

\begin{lemma}\label{L-3-1} Let $t\in [0,1)$.
	\begin{itemize}
		\item[\rm (i)] Each $C_t$ is a positive operator on $\omega$ ,i.e., $C_tx\geq 0$ whenever $x\geq 0$.
		\item[\rm (ii)] Let $0\leq r\leq s\leq 1$. Then
		\[
		0\leq |C_rx|\leq C_r|x|\leq C_s|x|,\quad x\in\omega.
		\]
		\item[\rm (iii)] For each $t\in [0,1)$ the identities
		\[
		C_te_n=\sum_{k=0}^\infty \frac{t^k}{k+n+1}e_{k+n}\in \ell^1, \quad n\in\N_0,
		\]
		and
		\[
		C_t(e_n-te_{n+1})=\frac{1}{n+1}e_n,\quad n\in\N_0,
		\]
		are valid.
		\item[\rm (iv)] For each $1<q<\infty$ we have $d_q\subseteq \ell^q\subseteq ces(q)$ with continuous inclusions.
	\end{itemize}
	\end{lemma}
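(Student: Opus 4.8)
The plan is to read parts (i)--(iii) off the explicit description \eqref{Ces-op}, equivalently off the lower triangular matrix \eqref{Matrix}: its $(m,j)$-entry is $t^{m-j}/(m+1)$ for $0\le j\le m$ (with $t^0:=1$, which also covers $t=0$) and $0$ for $j>m$, so that $(C_tx)_m=\frac{1}{m+1}\sum_{j=0}^{m}t^{m-j}x_j$ for each $m\in\N_0$. Part (i) is then immediate: for $x\ge 0$ every $(C_tx)_m$ is a sum of nonnegative terms over $m+1>0$. For part (ii), $0\le|C_rx|$ is trivial, $|C_rx|\le C_r|x|$ is the triangle inequality applied to the displayed formula, and $C_r|x|\le C_s|x|$ follows because $0\le r\le s$ forces $r^k\le s^k$ for all $k\ge 0$, so the two finite sums $(C_r|x|)_m$ and $(C_s|x|)_m$ may be compared term by term (using $|x_j|\ge 0$).

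For part (iii) I would first compute $C_te_n$ coordinatewise. Since $(e_n)_j=\delta_{nj}$, one gets $(C_te_n)_m=\frac{1}{m+1}t^{m-n}$ for $m\ge n$ and $0$ for $m<n$; putting $m=n+k$ gives $C_te_n=\sum_{k=0}^{\infty}\frac{t^{k}}{k+n+1}e_{k+n}$, and this lies in $\ell^1$ since $\sum_{k\ge 0}\frac{t^{k}}{k+n+1}\le\sum_{k\ge 0}t^{k}=\frac{1}{1-t}<\infty$ as $t\in[0,1)$. The second identity then follows by linearity together with a reindexing: $tC_te_{n+1}=\sum_{k\ge 0}\frac{t^{k+1}}{k+n+2}e_{k+n+1}=\sum_{k\ge 1}\frac{t^{k}}{k+n+1}e_{k+n}$, which cancels every term of $C_te_n$ except the $k=0$ term, leaving $C_t(e_n-te_{n+1})=\frac{1}{n+1}e_n$. (Alternatively one checks directly from \eqref{Matrix} that $(C_t(e_n-te_{n+1}))_m=\frac{1}{m+1}(t^{m-n}-t\cdot t^{m-n-1})=0$ for $m\ge n+1$, $=\frac{1}{n+1}$ for $m=n$, and $=0$ for $m<n$.)

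For part (iv) I would simply invoke the two continuous inclusions already assembled in Section 2: $d_q\subseteq\ell^q$ is continuous because $|x_n|\le(\hat x)_n$ for all $n$, whence $\|x\|_q\le\|\hat x\|_q=\|x\|_{d_q}$; and $\ell^q\subseteq ces(q)$ is continuous because Hardy's inequality (Proposition \ref{Sp-C1-lp}(i)) gives, for $x\in\ell^q$, $\|C_1|x|\|_q\le q'\|x\|_q<\infty$, i.e.\ $x\in ces(q)$ with $\|x\|_{ces(q)}\le q'\|x\|_q$.

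There is essentially no obstacle here: parts (i)--(iii) drop straight out of the triangular matrix \eqref{Matrix}, the only point deserving a moment's attention being the reindexing behind the telescoping cancellation in the second identity of (iii), and (iv) merely restates facts collected in Section 2.
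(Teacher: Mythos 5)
Your arguments for (i)--(iii) are correct; note, however, that the paper does not prove these parts at all but simply cites \cite[Lemma 2.1]{CR4}, so your coordinatewise computations from \eqref{Matrix} (including the reindexing/telescoping behind the second identity in (iii)) are supplying details the paper omits, and they check out. Your proof of the displayed chain $d_q\subseteq \ell^q\subseteq ces(q)$ in (iv) is also correct and is precisely the ``discussion after \eqref{norma}'' that the paper itself invokes at this point. The one genuine divergence is that the paper's own proof of (iv) treats that chain as already established and devotes itself entirely to a step your write-up does not address: the continuity of the further inclusion $ces(q)\subseteq\omega$. This is obtained by observing, for $x\in ces(q)$ and $0\le k\le n$, that $|x_k|\le (n+1)\frac{|x_0|+\cdots+|x_n|}{n+1}\le (n+1)\|C_1|x|\|_q=(n+1)\|x\|_{ces(q)}$, whence $r_n(x)\le (n+1)\|x\|_{ces(q)}$ for the seminorms \eqref{sem-omega}. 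Although this extra inclusion is not displayed in the statement of (iv), it is what the paper actually extracts from the lemma later (for instance to conclude that $\ell(p+)\subseteq\omega$ and $ces(p+)\subseteq\omega$ continuously in Section 4), so to match the paper's use of the result you should append that short estimate. As a proof of the statement exactly as written, your argument is complete.
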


\begin{proof}
	(iv) In view of the discussion after \eqref{norma} it remains to establish that $ces(q)\subseteq \omega$ continuously. Fix $x\in ces(q)$. Given $n\in\N_0$ observe that
	\[
	|x_k|\leq (n+1)\frac{|x_0|+|x_1|+\ldots +|x_n|}{n+1}\leq (n+1)\|C_1|x|\|_q=(n+1)\|x\|_{ces(q)}, \ 0\leq k\leq n.
	\]
	It follows from \eqref{sem-omega} that $r_n(x)\leq (n+1)\|x\|_{ces(q)}$. Since $n\in\N_0$ is arbitrary, we can conclude that $ces(q)\subseteq \omega$ continuously.
\end{proof}

The classical Cesàro operator $C_1\colon\omega\to\omega$ is a bicontinuous topological isomorphism (and hence, is \textit{not} a compact operator) with spectra given by
\[
\sigma(C_1;\omega)=\sigma_{pt}(C_1;\omega)=\Lambda \ \hbox{ and }\ \sigma^*(C_1;\omega)=\Lambda\cup\{0\};
\]
see \cite[p.285 \& Proposition 4.4]{ABR3}. So, we will only consider the case $t\in [0,1)$.

Let $t\in [0,1)$ and fix $n\in\N_0$. According to \eqref{Ces-op} and \eqref{sem-omega}, for each $x\in \omega$, it is the case that
\begin{equation}\label{eq.ContS-omega}
r_n(C_tx)=\max_{0\leq k\leq n}\left|\frac{1}{k+1}\sum_{i=0}^{k-1}t^{k-i}x_i\right|\leq \max_{0\leq k\leq n}\frac{1}{k+1}\sum_{i=0}^{k-1}|x_i|\leq r_n(x).
\end{equation}
This implies that $C_t\in \cL(\omega)$ and that the family of operators $\{C_t:\, t\in [0,1)\}$ is an equicontinuous subset of $\cL(\omega)$.

\begin{prop}\label{nocompact-omega} For each $t\in [0,1)$ the operator $C_t\in \cL(\omega)$ is a bicontinuous isomorphism of $\omega$ onto itself with inverse operator $(C_t)^{-1}\colon\omega\to\omega$ given by
	\begin{equation}\label{inverse}
		(C_t)^{-1}y=((n+1)y_n-nty_{n-1})_{n\in\N_0},\quad y\in\omega \ (\hbox{with}\ y_{-1}:=0).
	\end{equation}
In particular, $C_t$ is not a compact operator.
	\end{prop}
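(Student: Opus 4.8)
The plan is to verify directly that the operator defined in \eqref{inverse} is a two-sided inverse of $C_t$ in $\cL(\omega)$, and then to note that a compact operator on an infinite-dimensional Fr\'echet space cannot be an isomorphism. First I would guess the formula for the inverse from the matrix \eqref{Matrix}: since $C_t$ is lower triangular with invertible (nonzero) diagonal entries $\frac{1}{n+1}$, its inverse is again lower triangular and can be computed row by row. The recursion $(C_tx)_n = \frac{1}{n+1}\sum_{i=0}^{n} t^{n-i} x_i$ gives $(C_tx)_n - t\,(C_tx)_{n-1}\cdot\frac{n}{n+1}\cdot\frac{1}{1}$... more cleanly, writing $y=C_tx$ we have $(n+1)y_n = \sum_{i=0}^n t^{n-i}x_i$ and $n y_{n-1} = \sum_{i=0}^{n-1} t^{n-1-i}x_i$, so $(n+1)y_n - t\cdot n\,y_{n-1} = x_n$. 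This is exactly the candidate formula, and it shows at the same time that $C_t$ is injective on $\omega$.

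Next I would define $S\colon\omega\to\omega$ by $(Sy)_n := (n+1)y_n - n t\,y_{n-1}$ (with $y_{-1}:=0$) and check that $S\in\cL(\omega)$: for each $n$, $(Sy)_n$ depends only on $y_0,\dots,y_n$, so $S$ is represented by a lower triangular matrix and hence maps $\omega$ into $\omega$; continuity then follows either from the estimate $r_n(Sy)\le (2n+1)\, r_n(y)$ or, more economically, from Lemma \ref{C-incl} (a triangular linear map on $\omega$ is automatically continuous). The computation in the previous paragraph shows $S\,C_t x = x$ for all $x\in\omega$. For the reverse identity $C_t\,S y = y$, I would substitute: given $y\in\omega$, set $x:=Sy$, i.e. $x_n=(n+1)y_n-nt y_{n-1}$; then $\sum_{i=0}^n t^{n-i}x_i = \sum_{i=0}^n t^{n-i}\big((i+1)y_i - i t\,y_{i-1}\big)$, and this telescopes. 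Indeed $\sum_{i=0}^n t^{n-i}(i+1)y_i - \sum_{i=0}^n t^{n-i} i t\,y_{i-1} = \sum_{i=0}^n t^{n-i}(i+1)y_i - \sum_{j=-1}^{n-1} t^{n-j}(j+1)y_j = (n+1)y_n$, using $y_{-1}=0$; dividing by $n+1$ gives $(C_t x)_n = y_n$. Hence $S=(C_t)^{-1}$ and $C_t$ is a bicontinuous isomorphism of $\omega$ onto itself.

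Finally, for the last assertion: if $C_t$ were compact, there would be a $0$-neighbourhood $\cU$ in $\omega$ with $C_t(\cU)$ relatively compact; since $C_t$ is onto and open (being an isomorphism), $C_t(\cU)$ is itself a $0$-neighbourhood, so $\omega$ would be locally compact, forcing $\dim\omega<\infty$, a contradiction. Alternatively one can argue that $I=(C_t)^{-1}C_t$ would be compact by Lemma \ref{Comp-Op}, again forcing finite dimensionality. I do not expect any real obstacle here; the only point requiring a little care is the telescoping identity for $C_t S y = y$ and the bookkeeping with the convention $y_{-1}=0$, together with invoking Lemma \ref{C-incl} (or a direct seminorm estimate) to get continuity of $S$ without hand computation.
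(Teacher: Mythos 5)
Your proof is correct and takes essentially the same route as the paper: you verify that the explicit formula \eqref{inverse} defines a two-sided inverse of $C_t$ (the paper checks injectivity coordinatewise and surjectivity by plugging the same formula in, then gets bicontinuity from the open mapping theorem, whereas you verify continuity of the inverse directly via $r_n(Sy)\le(2n+1)\,r_n(y)$ — an equally valid and slightly more self-contained variant), and you conclude non-compactness from the infinite-dimensionality of $\omega$ exactly as the paper does. One minor caveat: Lemma \ref{C-incl} is not the right reference for the continuity of $S$ on $\omega$ itself, since that lemma is about restricting an operator already known to be in $\cL(\omega)$ to a continuously included subspace; your direct seminorm estimate, however, makes that invocation unnecessary.
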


\begin{proof} Fix $t\in [0,1)$. Let $x\in\omega$ satisfy $C_tx=0$. Considering the coordinate $0$ of $C_tx=0$ yields $x_0=0$; see \eqref{Ces-op}. The equation for coordinate $1$ of $C_tx=0$ is $\frac{tx_0+x_1}{2}=0$ (cf. \eqref{Ces-op}) which yields $x_1=0$. Proceed inductively for successive coordinates reveals that $x_n=0$ for all $n\in\N_0$. Hence, $C_t$ is injective.
	
	Given $y\in \omega$ let $x\in\omega$ be the element on the right-side of \eqref{inverse}. Direct calculation shows that $C_tx=y$. Accordingly, $C_t$ is surjective.
	
	By the open mapping theorem for Fr\'echet spaces (cf. Corollary 24.29 and Theorem 24.30 in \cite{MV}) the operator $C_t$ is a bicontinuous isomorphism.
	
	Since $C_t$ is a bicontinuous isomorphism of $\omega$, which is an infinite dimensional Fr\'echet space, $C_t$ cannot be a compact operator.
	\end{proof}

To determine the spectrum of $C_t\in \cL(\omega)$ requires some preparation. Define 
\begin{equation}\label{set}
	\cS:=\left\{x\in\omega:\, \beta(x):=\lim_{n\to\infty}\frac{|x_{n+1}|}{|x_n|}<1\right\},
\end{equation}
with the understanding that there exists $N\in\N_0$ such that $x_n\not=0$ for $n\geq N$ and the limit $\beta(x)$ exists. Analogously to $d_p$, for $1<p<\infty$,  define
\begin{equation}\label{d1}
	d_1:=\{x\in\ell^\infty\,:\, \hat{x}:=(\sup_{k\geq n}|x_k|)_{n\in\N_0}\in \ell^1\};
	\end{equation}
see \cite{Be}, \cite{BR1}, \cite{CR3}, \cite{G-E} and the references therein. Then $d_1$ is a Banach lattice for the norm $\|x\|_{d_1}:=\|\hat{x}\|_1$ and the coordinatewise order. Since $0\leq |x|\leq \hat{x}$, for $x\in\ell^\infty$, it is clear that $\|x\|_1\leq \|x\|_{d_1}$ for $x\in d_1$, that is, $d_1\subseteq \ell^1$ with a continuous inclusion. 
Clearly, $d_1\subseteq d_p$, for all $1<p<\infty$, and   $d_1\subseteq \ell^1$ implies that $d_1\subseteq \ell^p$, for all $1<p<\infty$. Moreover,  $\ell^p\subseteq ces(p)$ (cf. Section 2) and so also $d_1\subseteq ces(p)$, for $1<p<\infty$. All inclusions are continuous. In view of Lemma \ref{L-3-1}(iv) it is clear that $d_1\subseteq \omega$ and $\ell^1\subseteq \omega$ continuously.
It is known that $\mathcal{S}\subseteq d_1$, \cite[Lemma 3.3]{CR4}. 

\begin{remark}\label{Nuova R}\rm Proposition \ref{Sp-spazidp} is also valid for $p=1$; see \cite[Theorem 6.9]{CR4}.
	\end{remark}

The following result, \cite[Lemma 3.6]{CR4}, will be required.

\begin{lemma}\label{L-3-3} Let $t\in [0,1)$ and $\varphi$ be as in \eqref{dia}. For each $m\in\N$ define $x^{[m]}\in\omega$ by
	\begin{equation}\label{eigenvalue}
		x^{[m]}:=\alpha_m \left(0,\ldots,0,1, \frac{(m+1)!}{m!\,1!}t,\frac{(m+2)!}{m!\,2!}t^2,\frac{(m+3)!}{m!\,3!}t^3,\ldots \right),
	\end{equation}
with $\alpha_m\in\C\setminus\{0\}$ arbitrary, where $1$ is in position $m$. For $m=0$ define $x^{[0]}:=\alpha_0(t^n)_{n\in\N_0}$ with $\alpha_0\in\C\setminus\{0\}$ arbitrary.
\begin{itemize}
	\item[\rm (i)] For each $m\in\N_0$, the vector $x^{[m]}$ is the unique solution in $\omega$ of the equation $C_tx=\varphi_m x=\frac{1}{m+1}x$ whose  $m$-th coordinate is $\alpha_m$.
	\item[\rm (ii)] The vector $x^{[m]}\in d_1\su\omega$, for each $m\in\N_0$.
\end{itemize}
	\end{lemma}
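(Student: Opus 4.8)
The plan is to prove (i) by turning the eigenvalue equation $C_tx=\frac{1}{m+1}x$ into an elementary two–term recursion, using the explicit inverse of $C_t$ from Proposition~\ref{nocompact-omega}, and to prove (ii) by checking that each $x^{[m]}$ belongs to the set $\cS$ of \eqref{set}, which is already known to be contained in $d_1$.

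For (i): by Proposition~\ref{nocompact-omega}, $C_t$ is a bicontinuous isomorphism of $\omega$ with $(C_t)^{-1}y=((n+1)y_n-nty_{n-1})_{n\in\N_0}$ (setting $y_{-1}:=0$); see \eqref{inverse}. Hence, for $x\in\omega$, the equation $C_tx=\frac{1}{m+1}x$ is equivalent to $x=\frac{1}{m+1}(C_t)^{-1}x$, i.e. to $(m+1)x_n=(n+1)x_n-ntx_{n-1}$ for every $n\in\N_0$, which rearranges to the recursion $(n-m)x_n=ntx_{n-1}$ for $n\in\N_0$. I would then read this relation coordinate by coordinate: for $0\le n\le m-1$ the factor $n-m$ is nonzero, so starting from $x_{-1}=0$ one gets $x_0=\cdots=x_{m-1}=0$; at $n=m$ the relation reads $0=mt\,x_{m-1}=0$, so $x_m$ is unconstrained and is fixed by the normalization $x_m=\alpha_m$; and for $n\ge m+1$ the factor $n-m$ is again nonzero, so $x_n=\frac{nt}{n-m}\,x_{n-1}$ is forced. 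Iterating the last relation gives $x_{m+k}=\alpha_m\prod_{j=1}^{k}\frac{(m+j)t}{j}=\alpha_m\frac{(m+k)!}{m!\,k!}\,t^{k}$ for $k\ge1$, which is exactly the sequence $x^{[m]}$ of \eqref{eigenvalue} (the case $m=0$ being $\alpha_0(t^n)_{n\in\N_0}$). This proves existence and uniqueness simultaneously, hence (i). Alternatively, the eigenvalue identity can be verified head-on from the hockey-stick identity $\sum_{i=m}^{n}\binom{i}{m}=\binom{n+1}{m+1}$: writing $x^{[m]}_i=\alpha_m\binom{i}{m}t^{i-m}$, one obtains $(C_tx^{[m]})_n=\frac{\alpha_m}{n+1}t^{n-m}\binom{n+1}{m+1}=\frac{\alpha_m}{m+1}\binom{n}{m}t^{n-m}=\frac{1}{m+1}x^{[m]}_n$, with uniqueness again supplied by the displayed recursion.

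For (ii): recall from \eqref{set} that $\cS=\{x\in\omega:\beta(x):=\lim_{n\to\infty}|x_{n+1}|/|x_n|<1\}$ and that $\cS\subseteq d_1$. For the nonzero sequence $x^{[m]}$, using $x^{[m]}_n=\alpha_m\binom{n}{m}t^{n-m}$ for $n\ge m$, one computes $\frac{|x^{[m]}_{n+1}|}{|x^{[m]}_{n}|}=\frac{\binom{n+1}{m}}{\binom{n}{m}}\,t=\frac{n+1}{n+1-m}\,t\to t<1$ as $n\to\infty$. Hence $x^{[m]}\in\cS\subseteq d_1\subseteq\omega$, which is (ii). (One could instead observe directly that the polynomial-times-geometric sequence $(\binom{n}{m}t^{n-m})_{n}$ is eventually decreasing, so its least decreasing majorant agrees with it outside a finite set and is therefore in $\ell^1$.)

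There is no genuine obstacle here; the only point that needs attention is the bookkeeping at $n=m$ in the recursion, where the vanishing coefficient $n-m$ is precisely what forces $x_0=\cdots=x_{m-1}=0$ and singles out $x_m$ as the unique free parameter — this is what makes the corresponding eigenspace one-dimensional.
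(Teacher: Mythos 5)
The paper does not prove this lemma at all: it is imported verbatim as \cite[Lemma 3.6]{CR4}, so there is no internal argument to compare yours against. Your proof is correct and self-contained within the paper's framework. The reduction of $C_tx=\tfrac{1}{m+1}x$ to the two-term recursion $(n-m)x_n=nt\,x_{n-1}$ via the explicit inverse \eqref{inverse} from Proposition~\ref{nocompact-omega} is clean and simultaneously delivers existence, the closed form $x_{m+k}=\alpha_m\binom{m+k}{k}t^k$, uniqueness under the normalization $x_m=\alpha_m$, and one-dimensionality of the eigenspace; the direct verification via the hockey-stick identity, using $\tfrac{1}{n+1}\binom{n+1}{m+1}=\tfrac{1}{m+1}\binom{n}{m}$, also checks out. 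The only point to tidy is part (ii) in the degenerate case $t=0$: there $x^{[m]}=\alpha_m e_m$ is eventually zero, so the ratio $|x^{[m]}_{n+1}|/|x^{[m]}_n|$ is not defined and $x^{[m]}\notin\cS$ as that set is defined in \eqref{set} (membership in $\cS$ requires $x_n\neq 0$ for all large $n$). Your parenthetical alternative --- the sequence $\bigl(\binom{n}{m}t^{n-m}\bigr)_{n\ge m}$ is eventually decreasing with summable tail (ratio test, limit $t<1$), so its least decreasing majorant coincides with it from some index on and lies in $\ell^1$ --- covers all $t\in[0,1)$ uniformly and should be promoted to the main argument, or else the $\cS$-route should be stated only for $t\in(0,1)$ with $t=0$ handled by the trivial observation that finitely supported sequences belong to $d_1$.
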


\begin{remark}\label{Point-Sp}\rm Let $t\in [0,1)$ and $X$ be any Banach space in $\{\ell^1,d_1\}\cup\{\ell^p, ces(p), d_p\,:\, 1<p<\infty\}$. For each $\nu\in \sigma_{pt}(C_t;X)=\Lambda$,  it is the case that ${\rm dim}\,\Ker(\nu I-C_t)=1$. Indeed, $d_1\subseteq X$; see the discussion prior to Remark \ref{Nuova R}. Given $\nu\in \Lambda$ there exists $m\in\N_0$ such that $\nu=\varphi_m$. According to Lemma \ref{L-3-3} the $1$-dimensional eigenspace corresponding to $\nu\in \sigma_{pt}(C_t;\omega)$ is spanned by $x^{[m]}$ with $x^{[m]}\in d_1$. The claim is thereby proved.	
\end{remark}

The next lemma places a restriction on where $\sigma(C_t;\omega)$ can be located in $\C$.

\begin{lemma}\label{L-3-4} Let $t\in [0,1)$. For each $\nu\in \C\setminus\Lambda$ the operator $C_t-\nu I$ is a bicontinuous isomorphism of $\omega$ onto itself. In particular, $\sigma(C_t;\omega)\subseteq \Lambda$.
	\end{lemma}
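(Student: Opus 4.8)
The plan is to fix $\nu\in\C\setminus\Lambda$ and show directly that $C_t-\nu I$ is bijective on $\omega$; bicontinuity is then automatic from the open mapping theorem for Fr\'echet spaces, exactly as in Proposition \ref{nocompact-omega}. Because $C_t$ is represented by the lower triangular matrix \eqref{Matrix} with main diagonal $\varphi=\bigl(\frac{1}{n+1}\bigr)_n$, the operator $C_t-\nu I$ is again lower triangular with diagonal entries $\bigl(\frac{1}{n+1}-\nu\bigr)_n$, and these are all nonzero precisely because $\nu\notin\Lambda$. First I would prove injectivity: if $(C_t-\nu I)x=0$, then reading off coordinate $0$ gives $(1-\nu)x_0=0$, hence $x_0=0$ since $\nu\neq 1=\varphi_0$; reading off coordinate $n$ and using that $x_0=\dots=x_{n-1}=0$ already, one is left with $\bigl(\frac{1}{n+1}-\nu\bigr)x_n=0$, forcing $x_n=0$. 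So $C_t-\nu I$ is injective by this triangular induction.

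Next I would prove surjectivity by the same triangular scheme: given $y\in\omega$, solve $(C_t-\nu I)x=y$ coordinate by coordinate. Coordinate $0$ reads $(1-\nu)x_0=y_0$, so $x_0=y_0/(1-\nu)$; coordinate $n$ reads
\[
\Bigl(\tfrac{1}{n+1}-\nu\Bigr)x_n + \sum_{i=0}^{n-1}\frac{t^{\,n-i}}{n+1}\,x_i = y_n,
\]
which determines $x_n$ uniquely in terms of $y_n$ and the already-constructed $x_0,\dots,x_{n-1}$, since the coefficient $\frac{1}{n+1}-\nu$ is nonzero. This defines $x\in\omega$ with $(C_t-\nu I)x=y$. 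Hence $C_t-\nu I$ is a continuous linear bijection of the Fr\'echet space $\omega$ onto itself, and by the open mapping theorem (Corollary 24.29 and Theorem 24.30 in \cite{MV}) it is a bicontinuous isomorphism. Therefore $\nu\in\rho(C_t;\omega)$, and since $\nu\in\C\setminus\Lambda$ was arbitrary we conclude $\sigma(C_t;\omega)\subseteq\Lambda$.

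There is essentially no hard obstacle here: the entire argument rests on the lower triangular structure of the matrix together with the fact that $\nu\notin\Lambda$ makes every diagonal entry of $C_t-\nu I$ invertible, which is exactly what makes the forward-substitution recursion well-posed in both directions. The only point requiring a word of care is that the solution $x$ produced by the recursion genuinely lies in $\omega=\C^{\N_0}$ — but this is immediate, as $\omega$ imposes no growth or summability constraint whatsoever, so any sequence obtained by solving finitely many equations for each coordinate is automatically an element of $\omega$. (One could alternatively invoke $C_t^{-1}\in\cL(\omega)$ from Proposition \ref{nocompact-omega} and write $C_t-\nu I = C_t\bigl(I-\nu C_t^{-1}\bigr)$, reducing to showing $I-\nu C_t^{-1}$ is invertible, but the direct triangular computation is cleaner and self-contained.)
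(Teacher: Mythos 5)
Your proof is correct and follows essentially the same route as the paper: a coordinate-by-coordinate triangular induction for injectivity, forward substitution for surjectivity (using that every diagonal entry $\frac{1}{n+1}-\nu$ is nonzero), and the open mapping theorem for bicontinuity. The only difference is that the paper records an explicit closed-form expression for the coordinates of $(C_t-\nu I)^{-1}y$ (its equation \eqref{eq.Inverse}), which it needs later to prove equicontinuity of the resolvents in Theorem \ref{Sp-omega}, but that is not required for the lemma itself.
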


\begin{proof} Fix $\nu\not\in\Lambda$. Let $(C_t-\nu I)x=0$ for $x\in\omega$. It follows from \eqref{Matrix}, by equating the coordinate $0$ of $C_tx=\nu x$, that $x_0=\nu x_0$ and hence, as $\nu\not=1$, that $x_0=0$. Equating the coordinate $1$ of $C_tx=\nu x$ yields $\frac{tx_0+x_1}{2}=\nu x_1$. Since $x_0=0$ and $\nu\not=\frac{1}{2}$, it follows that $x_1=0$. Considering coordinate $2$ gives $\frac{t^2x_0+tx_1+x_2}{3}=\nu x_2$. Then  $x_0=x_1=0$ and $\nu\not=\frac{1}{3}$ imply $x_2=0$. Proceed inductively to conclude that $x=0$, that is, $C_t-\nu I$ is injective.
	
	To verify the surjectivity of $C_t-\nu I$ fix $y\in\omega$. It is required to show that there exists $x\in\omega$ satisfying $(C_t-\nu I)x=y$. Equating coordinate $0$ gives $x_0-\nu x_0=y_0$, that is, $x_0=y_0/(1-\nu)$. Considering coordinate $1$ yields $\frac{tx_0}{2}+(\frac{1}{2}-\nu)x_1=y_1$. Substituting for $x_0$ gives $(\frac{1}{2}-\nu)x_1=y_1-\frac{t}{2(1-\nu)}y_0$, that is, 
	\[
	x_1=\frac{y_1}{(\frac{1}{2}-\nu)}-\frac{ty_0}{2(\frac{1}{2}-\nu)(1-\nu)}.
	\]
	Next, an examination of coordinate $2$ yields $\frac{t^2}{3}x_0+\frac{t}{3}x_1+(\frac{1}{3}-\nu)x_2=y_2$. Substituting for $x_0$ and $x_1$ we can conclude that 
	\[
	x_2=\frac{y_2}{(\frac{1}{3}-\nu)}-\frac{ty_1}{3(\frac{1}{3}-\nu)(\frac{1}{2}-\nu)}+\frac{\nu t^2y_0}{3(\frac{1}{3}-\nu)(\frac{1}{2}-\nu)(1-\nu)}.
	\]
	Continuing inductively yields
	\begin{align}\label{eq.Inverse}
		x_n=&\frac{y_n}{(\frac{1}{n+1}-\nu)}-\frac{ty_{n-1}}{(n+1)(\frac{1}{n+1}-\nu)(\frac{1}{n}-\nu)}+\\
		&+\frac{\nu t^2y_{n-2}}{(n+1)(\frac{1}{n+1}-\nu)(\frac{1}{n}-\nu)(\frac{1}{n-1}-\nu)}\nonumber\\
	&-\frac{\nu^2 t^3y_{n-3}}{(n+1)(\frac{1}{n+1}-\nu)(\frac{1}{n}-\nu)(\frac{1}{n-1}-\nu)(\frac{1}{n-2}-\nu)}+ \ldots\nonumber\\
	& +(-1)^n \frac{\nu^{n-1} t^ny_{0}}{(n+1)(\frac{1}{n+1}-\nu)(\frac{1}{n}-\nu)\ldots (\frac{1}{2}-\nu)(1-\nu)}. \nonumber
		\end{align}
	Then $x\in\omega$ satisfies $(C_t-\nu I)x=y$. Hence, $C_t-\nu I$ is surjective.
\end{proof}

Combining the previous results yields the main result of this section.

\begin{theorem}\label{Sp-omega} For each $t\in [0,1)$ the spectra of $C_t\in \cL(\omega)$ are given by
	\[
	\sigma(C_t;\omega)=\sigma_{pt}(C_t;\omega)=\Lambda,
	\]
	with each eigenvalue being simple, and
	\[
	\sigma^*(C_t;\omega)=\Lambda\cup\{0\}.
	\]
	The $1$-dimensional eigenspace corresponding to the eigenvalue $1/(m+1)\in\Lambda$ is spanned by $x^{[m]}$ (cf. \eqref{eigenvalue}), for each $m\in\N_0$.
	\end{theorem}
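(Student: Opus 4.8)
The plan is to assemble the theorem from the three preparatory results already at hand, with the only genuine work being the determination of $\sigma^*(C_t;\omega)$. First I would dispose of the spectrum itself: by Lemma \ref{L-3-4} we have $\sigma(C_t;\omega)\subseteq\Lambda$, while Lemma \ref{L-3-3}(i) exhibits, for each $m\in\N_0$, a nonzero solution $x^{[m]}\in\omega$ of $C_tx=\frac{1}{m+1}x$, so that $\frac{1}{m+1}\in\sigma_{pt}(C_t;\omega)$; hence $\Lambda\subseteq\sigma_{pt}(C_t;\omega)\subseteq\sigma(C_t;\omega)\subseteq\Lambda$, forcing equality throughout. Simplicity of each eigenvalue is exactly the uniqueness clause in Lemma \ref{L-3-3}(i): any eigenvector for $\frac{1}{m+1}$ is, after normalizing its $m$-th coordinate, equal to $x^{[m]}$ (one checks as in the proof of Lemma \ref{L-3-4} that all coordinates before position $m$ vanish, so the $m$-th coordinate cannot be zero), whence $\dim\Ker(\frac{1}{m+1}I-C_t)=1$ and the eigenspace is spanned by $x^{[m]}$.

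It remains to identify $\sigma^*(C_t;\omega)$. Since $\sigma(C_t;\omega)\subseteq\sigma^*(C_t;\omega)$ and the latter is closed, we get $\overline{\Lambda}=\Lambda\cup\{0\}\subseteq\sigma^*(C_t;\omega)$, so the real content is the reverse inclusion $\sigma^*(C_t;\omega)\subseteq\Lambda\cup\{0\}$; equivalently, every $\nu\in\C\setminus(\Lambda\cup\{0\})$ lies in $\rho^*(C_t;\omega)$. Here I would invoke the countably-normed structure of $\omega$: write $\omega=\cap_{n}\omega_n$, where $\omega_n$ is $\C^{\N_0}$ with the norm $\|x\|_n:=r_n(x)=\max_{0\le j\le n}|x_j|$ — wait, this is not a norm on all of $\C^{\N_0}$ — so more carefully one takes $\omega_n$ to be the Banach space completion, namely $\omega_n=\{x\in\C^{\N_0}\}$ with $\|x\|_n=\sum_{j=0}^{n}\frac{|x_j|}{2^j}+\dots$; in fact the cleanest route is to cite the computation already done for the classical case in \cite{ABR3} (this is precisely the style of \cite[Proposition 4.4]{ABR3}) and apply Lemma \ref{Sp-FF}(ii). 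Concretely, $C_t$ satisfies condition (A) of Lemma \ref{Sp-FF} with respect to a suitable countably-normed realization of $\omega$, each link operator $T_n\in\cL(X_n)$ being a compact perturbation of the diagonal part $D_\varphi$ (which has spectrum $\Lambda\cup\{0\}$ in each $X_n$), so that $\sigma(T_n;X_n)=\Lambda\cup\{0\}$ for every $n$; then $\cup_n\sigma(T_n;X_n)=\Lambda\cup\{0\}=\overline{\Lambda}=\overline{\sigma(C_t;\omega)}$, and Lemma \ref{Sp-FF}(ii) yields $\sigma^*(C_t;\omega)=\overline{\sigma(C_t;\omega)}=\Lambda\cup\{0\}$.

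The step I expect to be the main obstacle is precisely making the last paragraph rigorous: exhibiting an explicit countably-normed presentation $\omega=\cap_n X_n$ for which $C_t$ satisfies condition (A) and for which one can compute $\sigma(T_n;X_n)=\Lambda\cup\{0\}$. The natural candidate, a weighted sup-norm $\|x\|_n=\max_{0\le j\le n}|x_j|$, is only a seminorm, so one must either pass to a genuine weighted $\ell^\infty$-type norm $\|x\|_n=\sup_j w_j^{(n)}|x_j|$ with $w^{(n)}$ chosen so that $C_t$ extends boundedly to each $X_n$ (the triangular structure and the geometric decay $t^k/(k+1)$ make this plausible with $w_j^{(n)}=\rho_n^{j}$ for suitable $\rho_n>1$), or simply cite that $\omega$ is a countably-normed Fréchet space (as noted after Lemma \ref{Sp-FF}) together with the fact, established for $t=1$ in \cite{ABR3} and valid verbatim for $t\in[0,1)$ by the same argument, that the relevant sectional operators have spectrum $\Lambda\cup\{0\}$. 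I would adopt the latter, citing \cite[Proposition 4.4]{ABR3} and Lemma \ref{Sp-FF}(ii), so that the proof reduces to the bookkeeping above; alternatively, one can bypass Lemma \ref{Sp-FF} entirely by a direct equicontinuity estimate on the resolvents $R(\nu,C_t)$ given by the explicit formula \eqref{eq.Inverse}, checking that for $\nu$ away from $\Lambda\cup\{0\}$ the family $\{R(\mu,C_t):\mu\in B(\nu,\delta)\}$ is equicontinuous in $\cL(\omega)$ — feasible but more computational, since one must bound, uniformly in $\mu$ near $\nu$, the coordinatewise coefficients appearing in \eqref{eq.Inverse}.
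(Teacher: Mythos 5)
Your treatment of $\sigma(C_t;\omega)=\sigma_{pt}(C_t;\omega)=\Lambda$, the simplicity of the eigenvalues, and the identification of the eigenspaces is correct and is exactly the paper's argument (Lemma \ref{L-3-3} for the lower bound and eigenspaces, Lemma \ref{L-3-4} for the upper bound, and closedness of $\sigma^*$ for $0\in\sigma^*(C_t;\omega)$).

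The route you adopt for the inclusion $\sigma^*(C_t;\omega)\subseteq\Lambda\cup\{0\}$ does not work, however. Lemma \ref{Sp-FF} requires a presentation $\omega=\cap_n X_n$ by Banach spaces whose restricted norms generate the topology of $\omega$; any such presentation would give $\omega$ a continuous norm, and $\omega=\C^{\N_0}$, being a countable product of lines, admits none (its local Banach spaces are the finite-dimensional quotients $\C^{n+1}$). The remark after Lemma \ref{Sp-FF} does not assert that $\omega$ is countably normed, and no weighted $\ell^\infty$-type presentation of the kind you sketch can have intersection equal to all of $\C^{\N_0}$: for any fixed positive weight $w^{(n)}$ one can choose $x\in\omega$ with $\sup_j w_j^{(n)}|x_j|=\infty$. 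So condition (A) cannot even be formulated here, and the appeal to Lemma \ref{Sp-FF}(ii) is vacuous. The correct argument is precisely the ``alternative'' you relegate to the end: for $\nu\notin\Lambda\cup\{0\}$ choose $\delta>0$ with $\mathrm{dist}(B(\nu,\delta),\Lambda\cup\{0\})=\epsilon>0$ and read off from the explicit formula \eqref{eq.Inverse} that each coordinate of $(C_t-\mu I)^{-1}y$ is a finite sum whose denominators are products of at most $k+1$ factors each of modulus at least $\epsilon$, giving $r_k\bigl((C_t-\mu I)^{-1}y\bigr)\leq \frac{M_k}{\epsilon^{k+1}}\bigl(\max_{0\leq j<k}|\nu|^j\bigr)r_k(y)$ uniformly in $\mu\in B(\nu,\delta)$; boundedness in $\cL_s(\omega)$ plus barrelledness of $\omega$ then yields equicontinuity, i.e.\ $\nu\in\rho^*(C_t;\omega)$. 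This is the paper's proof, and the computation is not onerous because each seminorm $r_k$ only sees finitely many coordinates. You should promote that estimate to the main argument and delete the appeal to Lemma \ref{Sp-FF}.
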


\begin{proof} It is clear from Lemma \ref{L-3-3} that $\Lambda\subseteq \sigma_{pt}(C_t;\omega)$ and that each point  $1/(m+1)\in \Lambda$ is a simple eigenvalue of $C_t$, whose corresponding eigenspace is spanned by $x^{[m]}$, for each $m\in\N_0$. Since $\sigma(C_t;\omega)\subseteq \Lambda$ (cf. Lemma \ref{L-3-4}) and $\sigma_{pt}(C_t;\omega)\subseteq\sigma(C_t;\omega)$, we can conclude that $\sigma(C_t;\omega)=\sigma_{pt}(C_t;\omega)=\Lambda$. 
	The containment $\sigma(C_t;\omega)\subseteq \sigma^*(C_t;\omega)$ and the fact that $\sigma^*(C_t;\omega)$  is a closed set imply that $0\in \sigma^*(C_t;\omega)$. 
	
	It remains to show that every $\nu\not\in (\Lambda\cup\{0\})$ belongs to $\rho^*(C_t;\omega)$. So, fix $\nu\not\in (\Lambda\cup\{0\})$. Select $\delta>0$ such that the distance $\epsilon$ of $B(\nu,\delta)$ to the compact set $\Lambda\cup\{0\}$ is strictly positive. It follows from $0\leq t<1$ and the identity \eqref{eq.Inverse} which is coordinate $n$ of $(C_t-\nu I)^{-1}y$, for each $y\in\omega$, that for any given $k\in\N_0$ there exists $M_k>0$ such that 
	\[
	r_k((C_t-\mu I)^{-1}y)\leq \frac{M_k}{\epsilon^{k+1}}(\max_{0\leq j< k}|\nu|^j)r_{k}(y),\quad \mu\in B(\nu,\delta),
	\]
	where $r_k$ is the seminorm \eqref{sem-omega}, with $k$ in place of $n$. This implies that $\{(C_t-\mu I)^{-1}\, :\, \mu\in B(\nu,\delta)\}$ is a bounded set in $\cL_s(\omega)$ and hence, by the barrelledness of $\omega$, it is an equicontinuous subset of $\cL(\omega)$. Accordingly, $\nu\in \rho^*(C_t;\omega)$.
\end{proof}

\section{$C_t$ acting in the Fr\'echet spaces $\ell(p+)$, $d(p+)$ and $ces(p+)$}

Given $1\leq p<\infty$, consider any strictly decreasing sequence $\{p_k\}_{k\in\N}\subseteq (p,\infty)$ which satisfies $p_k\downarrow p$. Then $X_k:=\ell^{p_k}$ satisfies $X_{k+1}\subseteq X_k$ with $\|x\|_{\ell^{p_k}}\leq \|x\|_{\ell^{p_{k+1}}}$   for each $k\in\N$ and $x\in X_{k+1}$. 
 Moreover, $X=\cap_{k=1}^\infty X_k$ (i.e., $\ell(p+):=\cap_{k=1}^\infty \ell^{p_k}$) is a Fr\'echet space of the type given in Lemma \ref{Sp-FF} whose topology is generated by the increasing sequence of \textit{norms} $u_k$, for $k\in\N$, given by
\begin{equation}\label{eq.normelp}
	u_k\colon x\mapsto \|x\|_{\ell^{p_k}}, \quad x\in \ell(p+).
	\end{equation}
That is, $u_k\leq u_{k+1}$ for $k\in\N$. Moreover, $p_k>p$ implies that the natural inclusion map $\ell(p+)\hookrightarrow \ell^{p_k}$ is continuous for each $k\in\N$. Clearly the Banach space $\ell^p\subseteq \ell(p+)$ continuously and also $\ell(p+)\subseteq \omega$ continuously, as $\ell^q\subseteq \omega$ continuously, for every $1\leq q<\infty$ (cf. Lemma \ref{L-3-1}(iv)). The space $\ell(p+)$ is independent of the choice of $\{p_k\}_{k\in\N}$.

Changing the Banach spaces, now let $X_k:=ces(p_k)$, in which case again $X_{k+1}\subseteq X_k$ with $\|x\|_{ces(p_k)}\leq \|x\|_{ces(p_{k+1})}$   for each $k\in\N$ and $x\in X_{k+1}$;
 see \cite[Propostion 3.2(iii)]{ABR6}. Then $X=\cap_{k=1}^\infty X_k$ (i.e., $ces(p+):=\cap_{k=1}^\infty ces(p_k)$) is a Fr\'echet space of the type given in Lemma \ref{Sp-FF} whose topology is generated by the increasing sequence of \textit{norms} $v_k$, for $k\in\N$, given by
\begin{equation}\label{eq.norme-cesp}
	v_k\colon x\mapsto \|x\|_{ces(p_k)}, \quad x\in ces(p+).
\end{equation}
That is, $v_k\leq v_{k+1}$ for $k\in\N$. Again $ces(p)\subseteq ces(p+)$ (if $p>1$) and $ces(p+)\subseteq \omega$ with both inclusions continuous, where we again use Lemma \ref{L-3-1}(iv). The Fr\'echet spaces $ces(p+)$, for $1\leq p<\infty$, have been intensively studied in \cite{ABR7}, \cite{ABR8}.

Finally, consider the family of Banach spaces $X_k:=d_{p_k}$, in which case $X_{k+1}\subseteq X_k$ with $\|x\|_{d_{p_k}}\leq \|x\|_{d_{p_{k+1}}}$   for each $k\in\N$ and $x\in X_{k+1}$;
	  see \cite[Proposition 5.1(iii)]{BR1}. So,  $X=\cap_{k=1}^\infty X_k$ (i.e., $d(p+):=\cap_{k=1}^\infty d_{p_k}$) is a Fr\'echet space of the type given in Lemma \ref{Sp-FF} whose topology is generated by the increasing sequence of \textit{norms} $w_k$, for $k\in\N$, given by
\begin{equation}\label{eq.norme-dp}
	w_k\colon x\mapsto \|x\|_{d_{p_k}}, \quad x\in d(p+).
\end{equation}
That is, $w_k\leq w_{k+1}$ for $k\in\N$. With continuous inclusions we have $d_p\subseteq d(p+)\subseteq \omega$; see \cite[\S 4]{BR2} or, argue as for $\ell^p$ and $\ell(p+)$.

It is known that the canonical vectors $\mathcal{E}$ belong to $\ell(p+)$, $d(p+)$ and $ces(p+)$, for $1\leq p<\infty$, and form an \textit{unconditional basis} in each of these spaces; see \cite[Proposition 3.1]{BR2}, \cite[Lemma 4.1]{BR2} and \cite[Proposition 3.5(i)]{ABR7}, respectively.

In this section we consider the compactness and determine the spectra of $C_t$ when they act in the Fr\'echet spaces  $\ell(p+)$, $d(p+)$ and $ces(p+)$, for $1\leq p<\infty$. The decreasing sequence $\{p_k\}_{k\in\N}$ always has the properties listed above. Crucial for the proofs is the existence of a particular factorization available for $C_t$ (cf. Proposition \ref{P-4-4}).

The decreasing sequence $\varphi$ given in \eqref{dia} satisfies $\|\varphi\|_\infty=1$. Define the linear map $D_\varphi\colon \omega\to\omega$ by
\begin{equation}\label{Op-dia}
	D_\varphi x:=(\varphi_0x_0,\varphi_1 x_1,\varphi_2 x_2,\ldots)=\left(\frac{x_n}{n+1}\right)_{n\in\N_0},\quad x\in\omega.
	\end{equation}
The diagonal (multiplication) operator $D_\varphi\in \cL(\omega)$ since, for each $n\in\N_0$,
\[
r_n(D_\varphi x)\leq r_n(x),\quad x\in\omega,
\]
where $r_n$ is the seminorm \eqref{sem-omega}. Define the \textit{right-shift operator} $S\colon\omega\to\omega$ by
\begin{equation}\label{Opshift}
	Sx:=(0, x_0,x_1,\ldots),\quad x\in\omega.
	\end{equation}
For each $n\in\N$ note that $r_n(Sx)=\max_{0\leq k<n}|x_k|\leq r_n(x)$ and for $n=0$ that $r_0(Sx)=0\leq r_0(x)$ for each $x\in\omega$. So, for every $n\in\N_0$, the operator $S$ satisfies
\begin{equation}\label{est-S}
	r_n(Sx)\leq r_n(x),\quad x\in\omega,
	\end{equation}
which implies that $S\in \cL(\omega)$. The following result is Lemma 2.2 in \cite{CR4}.

\begin{lemma}\label{L-4-1} For each $t\in [0,1)$ we have the representation
	\[
	C_t=\sum_{n=0}^\infty t^nD_\varphi S^n
	\]
	with the series being convergent in $\cL_s(\omega)$. Equivalently, 
	\[
	C_tx=\sum_{n=0}^\infty t^nD_\varphi S^n x,\quad x\in\omega,
	\]
	with the series being convergent in $\omega$.
	\end{lemma}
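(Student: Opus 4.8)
The plan is to verify the series identity directly at the level of coordinates, using that convergence in $\cL_s(\omega)$ means, for each fixed $x\in\omega$, that the partial sums converge in $\omega$, i.e. coordinatewise.

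\medskip

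First I would record the action of each summand. From \eqref{Op-dia} and \eqref{Opshift}, for $x\in\omega$, $n\in\N_0$ and $k\in\N_0$ one has $(S^nx)_k=x_{k-n}$ if $k\ge n$ and $(S^nx)_k=0$ if $k<n$; hence
\[
(D_\varphi S^n x)_k=\begin{cases}\dfrac{x_{k-n}}{k+1},& 0\le n\le k,\\[1mm] 0,& n>k.\end{cases}
\]
In particular each $D_\varphi S^n$ lies in $\cL(\omega)$, being a composition of the elements $D_\varphi,S\in\cL(\omega)$, so each partial sum $y^{(N)}:=\sum_{n=0}^N t^n D_\varphi S^n$ is a well-defined member of $\cL(\omega)$.

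\medskip

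Next, fix $x\in\omega$ and $N\in\N_0$. By the displayed formula, for every coordinate $k$ with $k\le N$ the terms with $n>k$ vanish, so
\[
\bigl(y^{(N)}x\bigr)_k=\sum_{n=0}^{k}t^n\,\frac{x_{k-n}}{k+1}=\frac{t^k x_0+t^{k-1}x_1+\cdots+x_k}{k+1}=(C_tx)_k,
\]
using \eqref{Ces-op}. Thus, for each fixed $k\in\N_0$, the $k$-th coordinate of $y^{(N)}x$ equals $(C_tx)_k$ for all $N\ge k$; equivalently $r_k\bigl(y^{(N)}x-C_tx\bigr)=0$ for $N\ge k$, where $r_k$ is the seminorm \eqref{sem-omega}. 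Since the seminorms $\{r_k\}_{k\in\N_0}$ determine the topology of $\omega$, it follows that $y^{(N)}x\to C_tx$ in $\omega$ as $N\to\infty$. As $x\in\omega$ was arbitrary, the series $\sum_{n=0}^\infty t^n D_\varphi S^n$ converges in $\cL_s(\omega)$ and its sum is $C_t$ (recall $C_t\in\cL(\omega)$ by \eqref{eq.ContS-omega}). The two displayed reformulations in the statement are merely a restatement of this.

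\medskip

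There is essentially no obstacle here: the only points requiring care are the index bookkeeping in $D_\varphi S^n$ and the observation that convergence in $\cL_s(\omega)$ reduces, for each fixed $x$, to the stabilization of each coordinate of the partial sums, which occurs after finitely many steps and yields precisely the Ces\`aro average \eqref{Ces-op}.
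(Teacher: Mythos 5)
Your proof is correct and complete: the coordinate formula for $D_\varphi S^n$ is right, each coordinate of the partial sums stabilizes at $(C_tx)_k$ once $N\ge k$, and this is exactly convergence in $\omega$ and hence in $\cL_s(\omega)$. The paper does not prove this lemma but simply cites \cite[Lemma 2.2]{CR4}; your direct coordinatewise verification is the natural self-contained argument and fills that in correctly.
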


Fix $t\in [0,1)$ and $x\in\omega$. For each $n\in\N_0$ it follows from \eqref{est-S} that
\[
r_n(\sum_{k=0}^\infty t^kS^kx)\leq \sum_{k=0}^\infty r_n(t^kS^kx)\leq \frac{1}{1-t}r_n(x).
\]
Accordingly, the series 
\begin{equation}\label{serie-R}
	R_t:=\sum_{n=0}^\infty t^nS^n, \quad t\in [0,1),
	\end{equation}
is absolutely convergent in the quasicomplete lcHs $\cL_s(\omega)$. In particular, $R_t\in \cL(\omega)$. Combining this with Lemma \ref{L-4-1} and the fact that $D_\varphi\in\cL(\omega)$ yields the following factorization of $C_t$.

\begin{prop}\label{P-4-2} For each $t\in [0,1)$ the operators $D_\varphi$, $R_t$, $C_t$ belong to $\cL(\omega)$ and 
	\begin{equation}\label{Rep}
C_t=D_\varphi R_t=\sum_{n=0}^\infty t^n D_\varphi S^n, 	
\end{equation}
with the series being absolutely convergent in $\cL_s(\omega)$.
\end{prop}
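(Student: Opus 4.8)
The plan is to assemble the pieces already in place, since the statement is little more than a bookkeeping exercise built on the seminorm estimates proved just above together with Lemma \ref{L-4-1}. First I would record that $D_\varphi\in\cL(\omega)$ and $S\in\cL(\omega)$, which follow from the estimates $r_n(D_\varphi x)\le r_n(x)$ and $r_n(Sx)\le r_n(x)$ (cf. \eqref{est-S}) valid for all $n\in\N_0$ and $x\in\omega$. Next, for fixed $x\in\omega$ and $n\in\N_0$ one has $\sum_{k=0}^\infty r_n(t^kS^kx)\le \frac{1}{1-t}r_n(x)<\infty$; since $\cL_s(\omega)$ is quasicomplete, this shows that $\sum_{k=0}^\infty t^kS^k$ is absolutely convergent in $\cL_s(\omega)$, and its sum is by definition the operator $R_t$ of \eqref{serie-R}, which therefore belongs to $\cL(\omega)$. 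The same geometric bound, now in the form $r_n(t^kD_\varphi S^kx)\le t^k r_n(x)$, shows that $\sum_{k=0}^\infty t^k D_\varphi S^k$ is likewise absolutely convergent in $\cL_s(\omega)$.

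It then remains to identify the sum $\sum_{k=0}^\infty t^k D_\varphi S^k$ with both $C_t$ and $D_\varphi R_t$. The first identification is exactly the content of Lemma \ref{L-4-1}. For the second, I would fix $x\in\omega$: by the definition of convergence in $\cL_s(\omega)$ we have $R_tx=\sum_{k=0}^\infty t^kS^kx$ in $\omega$, and applying the continuous linear operator $D_\varphi$ (which, being continuous, commutes with this convergent series) yields $D_\varphi R_t x=\sum_{k=0}^\infty t^k D_\varphi S^k x$. Since $x\in\omega$ is arbitrary, $D_\varphi R_t=\sum_{k=0}^\infty t^k D_\varphi S^k=C_t$ in $\cL(\omega)$, which is precisely \eqref{Rep}.

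I expect no genuine obstacle in this argument; it is a direct consequence of the seminorm inequalities established in the paragraphs preceding the statement together with Lemma \ref{L-4-1}. The only step warranting a moment of care is the interchange of $D_\varphi$ with the infinite series, which is legitimate because $D_\varphi$ is continuous on $\omega$ and the series $\sum_k t^kS^kx$ converges in $\omega$; all the quantitative input reduces to the geometric sum $\sum_{k=0}^\infty t^k=\frac{1}{1-t}$ applied seminorm by seminorm.
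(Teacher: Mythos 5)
Your argument is correct and is essentially the same as the paper's: the paper likewise deduces $R_t\in\cL(\omega)$ from the seminorm estimate $\sum_{k=0}^\infty r_n(t^kS^kx)\le\frac{1}{1-t}r_n(x)$ together with quasicompleteness of $\cL_s(\omega)$, and then obtains \eqref{Rep} by combining Lemma \ref{L-4-1} with the continuity of $D_\varphi$. No gaps.
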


Our aim is to to extend Proposition \ref{P-4-2} to $\cL(X)$ with $X\in\{\ell(p+),ces(p+),d(p+)\,:\, 1\leq p<\infty\}$, to show that $D_\varphi\in\cL(X)$ is compact and then to apply Lemma \ref{Comp-Op} to conclude that $C_t\in\cL(X)$ is compact. 

\begin{prop}\label{P-4-3} Let $X$ be any Fr\'echet space in $\{\ell(p+),ces(p+),d(p+)\,:\, 1\leq p<\infty\}$. Then $D_\varphi$ maps $X$ into $X$ and $D_\varphi\in\cL(X)$ is compact.
\end{prop}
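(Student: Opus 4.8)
Write $X=\cap_{k\in\N}X_k$, where $X_k$ is $\ell^{p_k}$, $ces(p_k)$ or $d_{p_k}$, and recall that the defining norms of the $X_k$ increase with $k$, so that $X_1$ is the largest of them and is continuously included in $\omega$. The plan rests on two facts about these Banach spaces: (a) $D_\varphi$ is compact on each $X_k$; and (b), the real point, $D_\varphi$ maps $X_1$ \emph{compactly} into every $X_k$.

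Fact (a), together with $D_\varphi\in\cL(X)$, is essentially already available: since $D_\varphi=C_0$, Propositions \ref{Sp-spazilp}, \ref{Sp-spazices} and \ref{Sp-spazidp} state that $D_\varphi$ is a compact operator of norm $\le1$ on $\ell^{p_k}$, $ces(p_k)$ and $d_{p_k}$. In particular $D_\varphi(X_k)\subseteq X_k$ for each $k$, hence $D_\varphi(X)\subseteq X$, and then $D_\varphi\in\cL(X)$ by Lemma \ref{C-incl}. One also sees (a) directly by truncating the diagonal, $\varphi=\varphi^{[N]}+\psi^{[N]}$ with $\varphi^{[N]}=(\varphi_0,\dots,\varphi_{N-1},0,0,\dots)$: the operator $D_{\varphi^{[N]}}$ has finite rank, while $\|D_{\psi^{[N]}}\|_{X_k\to X_k}\le\sup_{n\ge N}\varphi_n=\tfrac1{N+1}\to0$, since each $X_k$ is a solid Banach lattice in $\omega$ with monotone norm and $0\le\psi^{[N]}_n\le\tfrac1{N+1}$ forces $|D_{\psi^{[N]}}x|\le\tfrac1{N+1}|x|$.

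Compactness of $D_\varphi\in\cL(X)$ will follow from (b). Since the topology of $X=\cap_kX_k$ is that induced by the inclusions $X\hookrightarrow X_k$, a subset of $X$ is relatively compact in $X$ precisely when it is relatively compact in each $X_k$ (via the embedding into $\prod_kX_k$ and Tychonoff); so it suffices to find a $0$-neighbourhood $\cU$ in $X$ with $D_\varphi(\cU)$ relatively compact in every $X_k$. Take $\cU:=\{x\in X:\|x\|_{X_1}\le1\}$, a $0$-neighbourhood because the $X_k$-norms increase with $k$; then $D_\varphi(\cU)$ is contained in the image of the unit ball of $X_1$, and the requirement becomes exactly (b): $D_\varphi\colon X_1\to X_k$ compact for every $k$. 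The case $k=1$ is (a). For $k\ge2$ one has $p_k<p_1$, and here the special form $\varphi=(\tfrac1{n+1})_{n}$ is decisive: $\varphi\in\ell^r$ with $\tfrac1r=\tfrac1{p_k}-\tfrac1{p_1}\in(0,1)$, so $r>1$, and by Hölder's inequality together with truncation ($\|\psi^{[N]}\|_r\to0$) the diagonal operator $D_\varphi\colon\ell^{p_1}\to\ell^{p_k}$ is compact; for $X=\ell(p+)$ this already is (b). For $X=ces(p+)$ I would reduce (b) to this $\ell$-space statement via the pointwise bound $|x_n|\le(n+1)(C_1|x|)_n$, which by an Abel summation gives $0\le C_1D_\varphi|x|\le(I+C_1)(D_\varphi b)$ coordinatewise, with $b:=C_1|x|\in\ell^{p_1}$ and $\|b\|_{p_1}=\|x\|_{ces(p_1)}$; since $(I+C_1)D_\varphi\colon\ell^{p_1}\to\ell^{p_k}$ is compact (compact $\circ$ bounded, using Hardy's inequality to bound $C_1$ on $\ell^{p_k}$), the set $\{C_1D_\varphi|x|:\|x\|_{ces(p_1)}\le1\}$ lies in the solid hull of a relatively compact subset of $\ell^{p_k}$, hence is relatively compact there, and a routine coordinatewise-convergence argument upgrades this to relative compactness of $D_\varphi(\{x:\|x\|_{ces(p_1)}\le1\})$ in $ces(p_k)$. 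For $X=d(p+)$ the same scheme works, using instead $\widehat{D_\varphi x}\le D_\varphi\hat x$ coordinatewise together with $\hat x\in\ell^{p_1}$, $\|\hat x\|_{p_1}=\|x\|_{d_{p_1}}$.

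The only genuine obstacle is this last point, namely establishing (b) for the $ces$ and $d$ scales when $k\ge2$; and the polynomial decay of $\varphi$ is truly needed there. Indeed, a merely $c_0$ diagonal such as $(1/\log(n+2))_n$ is compact on every $\ell^{p_k}$ yet is \emph{not} compact on $\ell(p+)$, so no argument using only $\varphi\in c_0$ can succeed.
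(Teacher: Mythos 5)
Your proposal is correct, but it proves compactness by a genuinely different route from the paper. The paper's proof is short because it quotes ready-made characterizations of compact multiplication operators on these three Fr\'echet scales: $M_\varphi$ is compact on $\ell(p+)$ if and only if $\varphi\in\ell(\infty-)$ \cite[Proposition 17]{ABR8}, and compact on $ces(p+)$ and on $d(p+)$ when $\varphi\in d(\infty-)$ \cite[Proposition 10]{ABR8}, \cite[Theorem 4.13(i)]{BR3}; since $\varphi=\hat\varphi\in\ell^2$ these hypotheses are checked in one line. You instead argue from scratch: you use that a subset of $X=\cap_k X_k$ is relatively compact in $X$ iff it is relatively compact in every $X_k$ (legitimate, because $X$ embeds as a closed subspace of $\prod_k X_k$, the coordinate functionals being continuous on each $X_k$), reduce everything to compactness of $D_\varphi\colon X_1\to X_k$, settle the $\ell$-scale by H\"older ($\varphi\in\ell^r$ with $\tfrac1r=\tfrac1{p_k}-\tfrac1{p_1}<1$) plus finite-rank truncation, and transfer to the $ces$- and $d$-scales via the pointwise dominations $C_1D_\varphi|x|\le(I+C_1)D_\varphi(C_1|x|)$ and $\widehat{D_\varphi x}\le D_\varphi\hat x$; I checked both inequalities and they hold. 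What your approach buys is self-containedness and a sharp diagnosis of where the polynomial decay of $\varphi$ (rather than mere membership in $c_0$) is used; your counterexample $(1/\log(n+2))_n$ is consistent with the cited characterization and makes that point well. What it costs is the final ``upgrade,'' which you leave as routine: to pass from relative compactness of $\{C_1D_\varphi|x|\}$ (resp.\ $\{D_\varphi\hat x\}$) in $\ell^{p_k}$ to relative compactness of $D_\varphi(\cU)$ in $ces(p_k)$ (resp.\ $d_{p_k}$) one should invoke that $\cE$ is a Schauder basis of $X_k$, so relative compactness amounts to boundedness plus $\sup_y\|(I-P_N)y\|_{X_k}\to0$, and then bound the tail norms by tails of the compact families; for $d_{p_k}$ the extra term $N\,\hat y_N^{\,p_k}\le N(N+1)^{-p_k}\to0$ also has to be handled. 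These details do go through, so the proof is sound, just less economical than the paper's citation-based one.
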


\begin{proof}
	Recall that $\varphi\in c_0$ with $\|\varphi\|_\infty=1$. We consider each of the three possible cases for $X$. It was shown above that $D_\varphi\in \cL(\omega)$ and that $X\subseteq \omega$ continuously.
	
	(a) Suppose that $X=\ell(p+)$ for some $1\leq p<\infty$. Clearly, $D_\varphi(X_k)\subseteq X_k$ for each $k\in\N$ and so $D_\varphi\in\cL(X)$; see Lemma \ref{C-incl}. In the notation of \cite{ABR8} it is clear from \eqref{Op-dia} that $D_\varphi$ is precisely the multiplication operator $M_\varphi$ defined there. Such a multiplication operator is compact if and only if $\varphi\in \ell(\infty-)=\cup_{s>1}\ell^s$, \cite[Proposition 17]{ABR8}, which is surely the case as $\varphi\in \ell^2$, for example. So, $D_\varphi\in \cL(\ell(p+))$ is a compact operator.
	
	(b) Suppose that $X=ces(p+)$ for some $1\leq p<\infty$. It follows from \eqref{eq.norm-ces} that $D_\varphi(X_k)\subseteq X_k$ for each $k\in\N$ and so $D_\varphi\colon X\to X$. Lemma \ref{C-incl} yields that $M_\varphi=D_\varphi\in\cL(ces(p+))$.
	 Moreover, if 
	  $\varphi\in d(\infty-)=\cup_{s>1} d_s$, then  $M_\varphi$ is also compact, \cite[Proposition 10]{ABR8}. But, $\varphi$ is a positive decreasing sequence and so $\varphi=\hat{\varphi}$. Accordingly, by choosing $s=2$ say, we see that
	  \[
	  \|\varphi\|_{d_2}:=\|\hat{\varphi}\|_2=\|\varphi\|_2<\infty.
	  \]
	  Hence, $\varphi\in d_2\subseteq d(\infty-)$ and so $D_\varphi=M_\varphi\in \cL(ces(p+))$ is indeed compact.
	  
	  (c) Suppose $X=d(p+)$ for some $1\leq p<\infty$. Since $|D_\varphi x|= D_\varphi|x|\leq |x|$, for $x\in\ell^\infty$, it is clear that $\widehat{D_\varphi x}\leq \hat{x}$. Then \eqref{norma} implies that $D_\varphi(X_k)\subseteq X_k$ for all $k\in\N$ and so $D_\varphi\colon X\to X$. Again Lemma \ref{C-incl} yields that $D_\varphi\in \cL(d(p+))$. Note that the operator
	   $M_{d(p+)}^\varphi$ in \cite{BR3} is precisely $D_\varphi\colon d(p+)\to d(p+)$. It was verified in (b) above that $\varphi\in d(\infty-)$ which, together with $D_\varphi\in \cL(d(p+))$, implies that $D_\varphi$ is compact, \cite[Theorem 4.13(i)]{BR3}.
\end{proof}

\begin{prop}\label{P-4-4} Let $t\in [0,1)$, and $X$ be any  Fr\'echet space in $\{\ell(p+),ces(p+),d(p+)\,:\, 1\leq p<\infty\}$.
	
	\begin{itemize}
		\item[\rm (i)] The generalized Cesàro operator $C_t$ maps $X$ into itself and $C_t\in \cL(X)$.
		\item[\rm (ii)] The right-shift operator $S$ given by \eqref{Opshift} maps $X$ into itself and belongs to $ \cL(X)$.
		\item[\rm (iii)] The operator $R_t$ given by \eqref{serie-R} maps $X$ into itself and belongs to $ \cL(X)$, with the series $\sum_{n=0}^\infty t^nS^n$ being absolutely convergent in $\cL_s(X)$. Moreover,
		\[
		C_t=D_\varphi R_t=\sum_{n=0}^\infty t^nD_\varphi S^n.
		\]
	\end{itemize}
\end{prop}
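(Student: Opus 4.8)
The plan is to treat (i), (ii), (iii) in that order, each time reducing matters to the Banach-space facts of Section 2, to the factorization in $\omega$ of Proposition \ref{P-4-2}, and to the closed-graph Lemma \ref{C-incl}. Write $X=\cap_{k=1}^\infty X_k$ with $X_k\in\{\ell^{p_k},ces(p_k),d_{p_k}\}$ according to the three cases; recall that $X\hookrightarrow\omega$ continuously and that $p_k>1$ for every $k$. For (i): Propositions \ref{Sp-spazilp}, \ref{Sp-spazices} and \ref{Sp-spazidp} give $C_t\in\cL(X_k)$, so $C_t(X_k)\subseteq X_k$ for all $k$, hence $C_t(X)\subseteq X$; since also $C_t\in\cL(\omega)$ (Proposition \ref{P-4-2}), Lemma \ref{C-incl} yields $C_t\in\cL(X)$.

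For (ii) I would argue the same way, the only new input being that $S(X_k)\subseteq X_k$ with a continuous restriction. On $\ell^{p_k}$ the right shift is an isometry. On $ces(p_k)$ a one-line computation from \eqref{eq.norm-ces} gives $(C_1|Sx|)_0=0$ and $(C_1|Sx|)_n=\tfrac{n}{n+1}(C_1|x|)_{n-1}\leq(C_1|x|)_{n-1}$ for $n\geq1$, so $S$ is a contraction of $ces(p_k)$. On $d_{p_k}$ one checks $\widehat{Sx}=(\hat{x}_0,\hat{x}_0,\hat{x}_1,\hat{x}_2,\dots)$, whence $\|Sx\|_{d_{p_k}}^{p_k}=\hat{x}_0^{p_k}+\|x\|_{d_{p_k}}^{p_k}$; in particular $S\in\cL(d_{p_k})$. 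Thus $S(X)\subseteq X$, and since $S\in\cL(\omega)$ by \eqref{est-S}, Lemma \ref{C-incl} gives $S\in\cL(X)$.

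For (iii) the key step is a bound on the iterates $S^n$ on each $X_k$ that is only polynomial in $n$. On $\ell^{p_k}$ and $ces(p_k)$ iterating (ii) gives $\|S^nx\|_{\ell^{p_k}}=\|x\|_{\ell^{p_k}}$ and $\|S^nx\|_{ces(p_k)}\leq\|x\|_{ces(p_k)}$. On $d_{p_k}$ one observes that each application of $S$ leaves the zeroth coordinate of the least decreasing majorant unchanged ($\widehat{S^nx}_0=\hat{x}_0$) and shifts the rest by one, which gives the recursion $\|S^{n+1}x\|_{d_{p_k}}^{p_k}=\hat{x}_0^{p_k}+\|S^nx\|_{d_{p_k}}^{p_k}$ and hence $\|S^nx\|_{d_{p_k}}^{p_k}=\|x\|_{d_{p_k}}^{p_k}+n\hat{x}_0^{p_k}\leq(n+1)\|x\|_{d_{p_k}}^{p_k}$. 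So in every case there is, for each $k$, a constant $c_k\geq1$ with $q_k(S^nx)\leq c_k(n+1)q_k(x)$ for all $n$ and all $x\in X$, where $q_k\in\{u_k,v_k,w_k\}$ is the $k$-th generating norm; consequently
\[
\sum_{n=0}^\infty t^n q_k(S^n x)\;\leq\; c_k\,q_k(x)\sum_{n=0}^\infty t^n(n+1)\;<\;\infty,\qquad x\in X,
\]
because $0\leq t<1$. Hence $\sum_{n=0}^\infty t^nS^n$ converges absolutely in $\cL_s(X)$; as $X$ is Fr\'echet, hence barrelled, $\cL_s(X)$ is sequentially complete, and the series converges there to an operator $R_t\in\cL(X)$, which coincides on $X$ with the operator $R_t\in\cL(\omega)$ defined in \eqref{serie-R}. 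Finally $D_\varphi\in\cL(X)$ by Proposition \ref{P-4-3} and left multiplication by $D_\varphi$ is continuous on $\cL_s(X)$, so $D_\varphi R_t=\sum_{n=0}^\infty t^nD_\varphi S^n$ in $\cL_s(X)$; restricting the identity $C_t=D_\varphi R_t$ of Proposition \ref{P-4-2} from $\omega$ to $X$ then gives the displayed factorization in $\cL(X)$. (This also re-proves (i), and, combined with the compactness of $D_\varphi$ and Lemma \ref{Comp-Op}, shows that $C_t$ is compact.)

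The step I expect to be the main obstacle is the polynomial bound for $\|S^n\|_{d_{p_k}\to d_{p_k}}$: iterating the crude one-step estimate $\|Sx\|_{d_{p_k}}\leq 2^{1/p_k}\|x\|_{d_{p_k}}$ produces only the exponential growth $2^{n/p_k}$, which is not enough when $t$ is close to $1$; one has to compute $\widehat{S^nx}$ explicitly to see that the growth is in fact linear. All other steps are routine reductions to the results of Section 2 and to Lemma \ref{C-incl}.
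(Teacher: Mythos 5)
Your proof is correct and follows essentially the same route as the paper: reduce each statement to the Banach steps $X_k\in\{\ell^{p_k},ces(p_k),d_{p_k}\}$, invoke Lemma \ref{C-incl}, bound $\|S^n\|_{X_k\to X_k}$ polynomially, and conclude via Banach--Steinhaus and the factorization already valid in $\cL(\omega)$. The only difference is that you verify the shift estimates on $ces(p_k)$ and $d_{p_k}$ (including the crucial linear growth $\|S^nx\|_{d_{p_k}}^{p_k}=\|x\|_{d_{p_k}}^{p_k}+n\hat{x}_0^{p_k}$) by direct computation, where the paper simply cites Lemmas 5.4 and 6.2 of \cite{CR4}; your computations are correct.
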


\begin{proof}
	(i) Again we consider  the three possible cases for $X$. Fix $t\in [0,1)$. According to Proposition \ref{nocompact-omega} the operator $C_t\in \cL(\omega)$.
	
	(a) Suppose that $X=\ell(p+)$ for some $1\leq p<\infty$. Proposition \ref{Sp-spazilp} implies that $C_t(X_k)\subseteq X_k$ for all $k\in\N$, with $X_k=\ell^{p_k}$, and so $C_t(X)\subseteq X$. In view of  Lemma \ref{C-incl}, with $T:=C_t$, it follows that $C_t\in \cL(\ell(p+))$.
	
	(b) Suppose that $X=ces(p+)$ for some $1\leq p<\infty$. Proposition \ref{Sp-spazices} shows that $C_t(X_k)\subseteq X_k$ for all $k\in\N$, with $X_k=ces(p_k)$, and so $C_t(X)\subseteq X$. Again, for $T:=C_t$,  Lemma \ref{C-incl} implies that $C_t\in \cL(ces(p+))$.
	
	(c) Suppose that $X=d(p+)$ for some $1\leq p<\infty$. Proposition \ref{Sp-spazidp} shows that $C_t(X_k)\subseteq X_k$ for all $k\in\N$, with $X_k=d_{p_k}$, and so $C_t(X)\subseteq X$. Yet again, for $T:=C_t$,  Lemma \ref{C-incl} implies that $C_t\in \cL(d(p+))$.
	
	(ii) Again we check the three separate cases for $X$. Prior to Lemma \ref{L-4-1} it was shown that $S\in \cL(\omega)$.
	
	(a) Suppose that $X=\ell(p+)$ for some $1\leq p<\infty$. Using the fact that the Banach space right-shift operator $S\colon \ell^{p_k}\to\ell^{p_k}$ is an isometry, for every $k\in\N$, we see that $S(X)\subseteq X$.
 It follows that $S\in \cL(\ell(p+))$; see Lemma \ref{C-incl} for $T:=S\in \cL(\omega)$.
	
	(b) Suppose that $X=ces(p+)$ for some $1\leq p<\infty$. It is known, for each $k\in\N$, that $S\in \cL(ces(p_k))$ and $\|S\|_{ces(p_k)\to  ces(p_k)}\leq 1$, \cite[Lemma 5.4]{CR4}. Accordingly, $S(X)\subseteq X$ and so  Lemma \ref{C-incl}, for $T:=S\in \cL(\omega)$,
 implies that $S\in \cL(ces(p+))$.
	
	(c) Suppose that $X=d(p+)$ for some $1\leq p<\infty$. Fix $k\in\N$. It is known that 
	$S\in \cL(d_{p_k})$ and 
	\begin{equation}\label{eq.4-11}
		\|S^m\|_{d_{p_k}\to d_{p_k}}=(m+1)^{1/p_k},\quad m\in\N_0,
		\end{equation}
	\cite[Lemma 6.2]{CR4}. For $m=1$ we can conclude that $S(d_{p_k})\subseteq d_{p_k}$ for $k\in\N$, that is, $S(X)\subseteq X$. So, in view of Lemma \ref{C-incl}, for $T:=S\in\cL(\omega)$, it follows that $S\in \cL(d(p+))$.
	
	(iii) (a) Suppose that $X=\ell(p+)$ for some $1\leq p<\infty$. Fix $k\in\N$ and $x\in \ell(p+)\subseteq \ell^{p_k}$. It follows from $S$ being an isometry in $\ell^{p_k}$ that $u_k(S^n x)= u_k(x)$ for all $n\in\N_0$ and hence, that
	\[
	\sum_{n=0}^\infty u_k(t^n S^n x)=\sum_{n=0}^\infty t^n u_k(S^n x)\leq \frac{1}{1-t}u_k(x)<\infty.
	\]
	Accordingly, the series $\sum_{n=0}^\infty t^n S^n x$ is absolutely convergent in the Fr\'echet space $\ell(p+)$ for each $x\in \ell(p+)$. By part (ii) the sequence $\{\sum_{n=0}^m t^n S^n \}_{m\in\N_0}\su\cL(\ell(p+))$ and so, by the Banach-Steinhaus theorem (as $\ell(p+)$ is barrelled), the series $\sum_{n=0}^\infty t^n S^n$ is absolutely convergent in $\cL_s(\ell(p+))$; its sum is denoted by $R_t\in \cL(\ell(p+))$.
	
	It has been established that each of the operators $C_t$, $D_\varphi$, $R_t$ belongs to $\cL(\ell(p+))$. The identities
	$C_t=D_\varphi R_t=\sum_{n=0}^\infty t^nD_\varphi S^n$
	are valid in $\cL(\ell(p+))$ because they are valid in $\cL(\omega)$; see Lemma \ref{L-4-1} and both \eqref{serie-R} and \eqref{Rep}.
	
	(b) Suppose $X=ces(p+)$ for some $1\leq p<\infty$. Fix $k\in\N$ and $x\in ces(p+)\subseteq ces(p_k)$. Using $\|S^n\|_{ces(p_k)\to ces(p_k)}\leq 1$, for all $n\in\N_0$ (see the proof of part (ii)(b)), we can argue as in (a) to conclude that
	\[
	\sum_{n=0}^\infty v_k(t^nS^n x)\leq \frac{1}{1-t}v_k(x)<\infty.
	\]
	Hence, the series $\sum_{n=0}^\infty t^n S^n x$ is absolutely convergent in $ces(p+)$ for each $x\in ces(p+)$. Then argue as in (a) to deduce that the series $R_t:=\sum_{n=0}^\infty t^n S^n $ is absolutely convergent in $\cL_s(ces(p+))$, with $R_t\in \cL(ces(p+))$, and that the identities
	$C_t=D_\varphi R_t=\sum_{n=0}^\infty t^nD_\varphi S^n$ 
	are valid in $\cL(ces(p+))$.
	
	(c) Let $X=d(p+)$ for some $1\leq p<\infty$. Fix $k\in\N$ and $x\in d(p+)\subseteq d_{p_k}$. It follows from \eqref{eq.4-11} that 
	\[
	w_k(S^mx)=\|S^m x\|_{d_{p_k}}\leq \|S^m\|_{d_{p_k}\to d_{p_k}} \|x\|_{d_{p_k}}=(m+1)^{1/p_k}w_k(x),\quad m\in\N_0,
	\]
	and hence, since $0\leq t<1$, that
	\[
	\sum_{n=0}^\infty w_k(t^nS^n x)\leq (\sum_{n=0}^\infty t^n (n+1)^{1/p_k})w_k(x)<\infty.
	\]
	Now argue as in (a) to conclude that the series $R_t:=\sum_{n=0}^\infty t^n S^n$ is absolutely convergent in $\cL_s(d(p+))$, with $R_t\in \cL(d(p+))$, and that the identities
$C_t=D_\varphi R_t=\sum_{n=0}^\infty t^nD_\varphi S^n$ are valid in $\cL(d(p+))$. 
\end{proof}

We come to the main result of this section, which should be compared with Proposition \ref{nocompact-omega} and Theorem \ref{Sp-omega}.

\begin{theorem}\label{Sp-F} Let $t\in [0,1)$ and $X$ be any Fr\'echet space in $\{\ell(p+), ces(p+),d(p+):\, 1\leq p<\infty\}$.
	\begin{itemize}
		\item[\rm (i)] The generalized Cesàro operator $C_t\in \cL(X)$ is compact.
		\item[\rm (ii)] The spectra of $C_t$ are given by
		\begin{equation}\label{spectra-pt-X}
			\sigma_{pt}(C_t;X)=\Lambda
		\end{equation}
	and
	\begin{equation}\label{spectra-X}
		\sigma^*(C_t;X)=\sigma(C_t;X)=\Lambda\cup\{0\}.
	\end{equation}
\item[\rm (iii)] For each $\lambda\in \sigma_{pt}(C_t;X)$ the subspace $(\lambda I-C_t)(X)$ is closed in $X$ with ${\rm codim}\, (\lambda I- C_t)(X)=1$. Moreover, the $1$-dimensional eigenspace $\Ker(\frac{1}{m+1}I-C_t)={\rm span} (x^{[m]})$, for each $m\in\N_0$, with $x^{[m]}\in d_1\subseteq X$ given by \eqref{eigenvalue}.
	\end{itemize}
\end{theorem}

\begin{proof}
	(i) Since $D_\varphi\in \cL(X)$ is compact (cf. Proposition \ref{P-4-3}) and $R_t\in \cL(X)$ (cf. Proposition \ref{P-4-4}(iii)), the compactness of $C_t$ follows from the factorization $C_t=D_\varphi R_t$ (cf. Proposition \ref{P-4-4}(iii)) and Lemma \ref{Comp-Op}.
	
	(ii) Since $X\subseteq \omega$, we can conclude from Theorem \ref{Sp-omega} that
	\begin{equation}\label{eq.4-14}
		\sigma_{pt}(C_t;X)\subseteq \sigma_{pt}(C_t;\omega)=\Lambda.
		\end{equation}
	Fix $1\leq p<\infty$. Then $d_1\subseteq \ell^1\subseteq \ell^p\subseteq \ell(p+)$. Since $\ell^p\subseteq ces(p)\subseteq ces(p+)$ (cf. (1) on p.2 of \cite{CR3}), it follows that also $d_1\subseteq ces(p+)$. Moreover, $d_1\subseteq d_p\subseteq d(p+)$. So, $d_1\subseteq X$. Given $\nu\in\Lambda$ there exists $m\in\N_0$ such that $\nu=\varphi_m$. According to Lemma \ref{L-3-3} the $1$-dimensional eigenspace corresponding to $\nu\in \sigma_{pt}(C_t;\omega)$ is spanned by $x^{[m]}$ with $x^{[m]}\in d_1$. Since $d_1\subseteq X$, it follows that $\nu\in \sigma_{pt}(C_t;X)$. So, it has been established that $\Lambda\subseteq \sigma_{pt}(C_t;X)$. Combined with \eqref{eq.4-14} we can conclude that \eqref{spectra-pt-X} is valid.
	
	The spectrum of a compact operator in a lcHs is necessarily a compact subset of $\C$ (see \cite[Theorem 9.10.2]{Ed}, \cite[Theorem 4 \& Proposition 6]{Gr}) and it is either a finite set or a countable sequence of non-zero eigenvalues with limit point $0$. It follows from part (i) and \eqref{spectra-pt-X} that 
	\begin{equation}\label{eqq.4-15}
		\sigma(C_t;X)=\Lambda\cup\{0\}.
	\end{equation}
The discussion in the first three paragraphs of this section, with the notation from there, shows that $X=\cap_{k=1}^\infty X_k$ is a Fr\'echet space of the type given in Lemma \ref{Sp-FF}. Setting there $T:=C_t\in \cL(X)$ and $T_n:=C_t\in \cL(X_n)$ for $n\in\N$ (see Propositions \ref{Sp-spazilp}, \ref{Sp-spazices} and \ref{Sp-spazidp}),
it is  clear that condition (A) is satisfied. Moreover, $\sigma(T_n;X_n)=\Lambda\cup\{0\}$ for every $n\in\N$ (cf. \eqref{Sp-lp}, \eqref{Sp-ces} and \eqref{Sp-dp} with $p_n$ in place of $p$) and so, via \eqref{eqq.4-15}, we have that
\[
\cup_{n=1}^\infty\sigma(T_n;X_n)=\Lambda\cup\{0\}=\sigma(T;X)=\sigma(C_t;X).
\]
In particular, $\cup_{n=1}^\infty\sigma(T_n;X_n)\subseteq \ov{\sigma(T;X)}$ and so we can conclude from Lemma \ref{Sp-FF} that \eqref{spectra-X} is valid.

(iii) First observe that $(\nu I-C_t)=\nu (I- \nu^{-1}C_t)$, for $\nu\in\C\setminus \{0\}$, with $\nu^{-1}C_t$ being a compact operator by part (i). So, by \cite[Theorem 9.10.1(i)]{Ed}, the subspace $(\nu I-C_t)(X)$ is closed in $X$ with ${\rm codim}\, (\nu I-C_t)(X)={\rm dim}\,\Ker (\nu I-C_t)$ for every $\nu\in \sigma_{pt}(C_t;X)$. But, ${\rm dim}\,\Ker (\nu I-C_t)=1$ for  $\nu\in \sigma_{pt}(C_t;X)$, as observed in the proof of part (ii), where it was also established that $\Ker(\frac{1}{m+1}I -C_t)={\rm span}(x^{[m]})$, for each $m\in\N_0$.
%
%
%
\end{proof} 

\begin{remark}\label{R-4-6}
	\rm (i) The identity \eqref{spectra-pt-X}, established in the proof of part (ii) of Theorem \ref{Sp-F}, can also be deduced from Lemma \ref{Sp-FF}(ii).

	(ii) Let $t\in [0,1)$ and $X$ be any Fr\'echet space in $\{\ell(p+), ces(p+), d(p+)\,:\, 1\leq p<\infty\}$. Since $X\subseteq \omega$ and $C_t\in \cL(\omega)$ is injective (cf. Lemma \ref{L-3-4}), also $C_t\in \cL(X)$ is injective. Moreover, as $C_t\in \cL(X)$ is compact (cf. Theorem \ref{Sp-F}(i)) it cannot be surjective, otherwise it would be an isomorphism thereby implying that $0\in \rho(C_t;X)$, which is \textit{not} the case (see \eqref{spectra-X}). Recall that $\mathcal{E}$ is a basis for $X$ and, by Lemma \ref{L-3-1}(iii), that the range $C_t(X)$ is a proper, dense subspace of $X$. Hence, $0$ belongs to the \textit{continuous spectrum} of $C_t$. This is in contrast to the situation of $\omega$, where $0\in\rho(C_t;\omega)$; see Theorem \ref{Sp-omega}.
	
	(iii) Concerning the case when $t=1$, it is known that $\sigma_{pt}(C_1;\ell(p+))=\emptyset$ and
	\begin{equation}\label{eq.4-16}
		\sigma(C_1;\ell(p+))=\left\{z\in\C\, :\, \left|z-\frac{p'}{2}\right|<\frac{p'}{2}\right\}\cup\{0\}\ \mbox{ and }\ \sigma^*(C_1;\ell(p+))=\overline{\sigma(C_1;\ell(p+))},
	\end{equation}
for every $1<p<\infty$, \cite[Theorem 2.2]{ABR3}. For $p=1$, again $\sigma_{pt}(C_1;\ell(1+))=\emptyset$ whereas
\begin{equation}\label{eq.4-17}
	\sigma(C_1;\ell(1+))=\left\{z\in\C\, :\, {\rm Re}\,z>0\right\}\cup\{0\}\ \mbox{ and }\ \sigma^*(C_1;\ell(1+))=\ov{\sigma(C_1;\ell(1+))},
\end{equation}
\cite[Theorem 2.4]{ABR3}. For the Fr\'echet space $ces(p+)$, both \eqref{eq.4-16} and \eqref{eq.4-17} are also valid (with $ces(p+)$, resp. with $ces(1+)$, in place of $\ell(p+)$, resp. in place of $\ell(1+)$), as well as $\sigma_{pt}(C_1;ces(p+))=\emptyset$ for all $1\leq p<\infty$, \cite[Theorem 3]{ABR8}. For the Fr\'echet space $d(p+)$,  both \eqref{eq.4-16} and \eqref{eq.4-17} are again valid with $d(p+)$ (resp. with $d(1+)$), in place of $\ell(p+)$ (resp. of $\ell(1+)$), as well as $\sigma_{pt}(C_1;d(p+))=\emptyset$ for all $1\leq p<\infty$, \cite[Theorem 3.2]{BR3}.
\end{remark}

\section{$C_t$ acting in the (LB)-spaces $\ell(p-)$, $d(p-)$ and $ces(p-)$}

Given $1<p\leq \infty$, consider any strictly increasing sequence $\{p_k\}_{k\in\N}\su (1,p)$ which satisfies $p_k\uparrow p$. The Banach spaces $X_k:=\ell^{p_k}$ satisfy $X_k\subset X_{k+1}$ with a continuous inclusion, for each $k\in\N$, and $X=\cup_{k=1}^\infty X_k$ is an (LB)-space, necessarily \textit{regular} by Lemma \ref{Reg-LB}. The (LB)-space $X$ is denoted by $\ell(p-)=\ind_k\ell^{p_k}$. If we set $X_k:=ces(p_k)$, then again $X_k\subset X_{k+1}$ for $k\in\N$ (see the discussion prior to Proposition 3.3 in \cite{ABR6}) with a continuous inclusion. The (LB)-space $X:=\cup_{k=1}^\infty X_k$, necessarily \textit{regular} by Lemma \ref{Reg-LB}, is denoted by $ces(p-):=\ind_k ces(p_k)$. Finally, the Banach spaces $X_k:=d_{p_k}$ satisfy $X_k\subset X_{k+1}$ with a continuous inclusion, for $k\in\N$ (see Propositions 2.7(ii) and 5.1(iii) in \cite{BR1}). The (LB)-space $X:=\cup_{k=1}^\infty X_k$, necessarily \textit{regular} by Lemma \ref{Reg-LB}, is denoted by $d(p-):=\ind_k d_{p_k}$. The discussion after \eqref{d1} shows that $d_1$ is continuously included in each space in $\{\ell^p, ces(p), d_p\,:\,1<p<\infty\}$, from which it follows that $d_1\subseteq X$ continuously, for each $X\in\{\ell(p-), ces(p-),d(p-)\,:\, 1<p\leq \infty\}$. Indeed, by the definition of the inductive limit topology, $\ell^p\subseteq \ell(p-)$ and $d_p\subseteq d(p-)$ and $ces(p)\subseteq ces(p-)$ with all inclusions continuous.
In all of these (LB)-spaces the canonical vectors $\mathcal{E}$ form a Schauder basis. Indeed, concerning $\ell(p-)$ recall that $\cE$ is a basis for each Banach space $\ell^{p_k}$ and the natural inclusion $\ell^{p_k}\subseteq \ell(p-)$ is continuous for each $k\in\N$. It follows that $\cE$ is a Schauder basis for $\ell(p-)$. For the (LB)-spaces $ces(p-)$, resp. $d(p-)$, see \cite[Proposition 2.1]{ABR4}, resp. \cite[Theorem 4.6]{BR2}. It follows from \cite[Proposition 24.7]{MV} together with Lemma \ref{L-3-1}(iv) that $X\subseteq \omega$ continuously. For further properties of the (LB)-spaces $\ell(p-)$, $ces(p-)$ and $d(p-)$, and operators acting in them, we refer to \cite{ABR4}, \cite{BR2}, \cite{BR3}, for example, and the references therein.

For each of the three cases above it is clear that the diagonal (multiplication) operator $D_\varphi\in\cL(\omega)$ as defined in \eqref{Op-dia} satisfies $D_\varphi(X_k)\subseteq X_k$ for all $k\in\N$ (cf. proof of Proposition \ref{P-4-3}) and so $D_\varphi(X)\subseteq X$.  By Lemma \ref{C-incl} it follows that $D_\varphi\in \cL(X)$. Actually, $D_\varphi\in \cL(X)$ is a compact operator. For the case $X=\ell(p-)$, since $\varphi\in \ell^2\subseteq \ell(\infty-)$, Proposition 4.5 of \cite{ABR4} implies that $D_\varphi\in \cL(\ell(p-))$ is compact. Suppose now that $X:=ces(p-)$. By Proposition 4.2 of \cite{ABR4} it follows that $D_\varphi\in \cL(ces(p-))$ is compact provided that $\hat{\varphi}\in \ell^t$ for some $t>q$ (with $\frac{1}{p}+\frac{1}{q}=1$). But, it is clear from \eqref{dia} that $\hat{\varphi}=\varphi\in \cap_{s>1}\ell^s$ and so $D_\varphi$ is a compact operator in $ces(p-)$. Consider now when $X:=d(p-)$. Since $\hat{\varphi}\in\ell^2$ and $\hat{\varphi}=\varphi$, it follows that $\varphi\in d_2\subseteq d(\infty-)$ and so Proposition 4.13(ii) of \cite{BR3} implies that $D_\varphi$ is a compact operator in $d(p-)$. So, we have established the following result.

\begin{prop}\label{P-5-1} Let $X$ be any (LB)-space in $\{\ell(p-), ces(p-), d(p-)\, :\, 1<p\leq \infty\}$. Then $D_\varphi$ maps $X$ into itself and $D_\varphi\in \cL(X)$ is a compact operator.
\end{prop}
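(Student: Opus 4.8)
The plan is to run, at the level of the Banach step spaces, the same factorisation-free argument used for the Fr\'echet spaces in Propositions~\ref{P-4-3} and~\ref{P-4-4}, then transfer invariance to the (LB)-space by Lemma~\ref{C-incl}, and finally read off compactness from the known characterisations of compact diagonal operators on $\ell(p-)$, $ces(p-)$ and $d(p-)$.

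First I would check that $D_\varphi$ leaves every step space invariant. Since $\varphi=(\frac{1}{n+1})_{n\in\N_0}$ is positive and decreasing with $\|\varphi\|_\infty=1$, one has $0\le|D_\varphi x|\le|x|$ coordinatewise for each $x\in\omega$; consequently $\|D_\varphi x\|_{\ell^{p_k}}\le\|x\|_{\ell^{p_k}}$, and, using that $C_1$ preserves positivity together with monotonicity of the $\ell^{p_k}$-norm, $\|D_\varphi x\|_{ces(p_k)}=\|C_1|D_\varphi x|\|_{p_k}\le\|x\|_{ces(p_k)}$, while $\widehat{D_\varphi x}\le\hat x$ gives $\|D_\varphi x\|_{d_{p_k}}\le\|x\|_{d_{p_k}}$. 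Hence $D_\varphi(X_k)\subseteq X_k$ for all $k\in\N$, so $D_\varphi(X)\subseteq X$; as $D_\varphi\in\cL(\omega)$ and $X$ is continuously included in $\omega$, Lemma~\ref{C-incl} yields $D_\varphi\in\cL(X)$.

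For compactness I would split into the three cases, each time reducing the claim to a membership assertion about $\varphi$. The key point is that $\varphi=\hat\varphi$, so $\varphi\in\bigcap_{s>1}\ell^s=\ell(\infty-)$ and also $\varphi\in d_s$ for every $s>1$, i.e.\ $\varphi\in d(\infty-)$. For $X=\ell(p-)$ this is precisely the hypothesis of Proposition~4.5 of \cite{ABR4}, giving compactness of $D_\varphi$. For $X=ces(p-)$, with $\frac1p+\frac1q=1$ so that $1\le q<\infty$, one needs $\hat\varphi\in\ell^t$ for some $t>q$; since $\hat\varphi=\varphi\in\bigcap_{s>1}\ell^s$ any $t>\max\{q,1\}$ works, and Proposition~4.2 of \cite{ABR4} applies. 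For $X=d(p-)$ the condition $\varphi\in d(\infty-)$ is the hypothesis of Proposition~4.13(ii) of \cite{BR3}, which again gives compactness. This completes the argument.

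I do not expect a genuine obstacle here: the mathematical substance sits entirely in the three cited compactness criteria. The points that need care are (a) establishing the non-expansive norm estimates above, so that the invariance $D_\varphi(X_k)\subseteq X_k$ is clean and Lemma~\ref{C-incl} applies, and (b) covering the endpoint $p=\infty$, where $q=1$ and the exponent required for $ces(p-)$ is merely $t>1$ --- still available because $\varphi\in\bigcap_{s>1}\ell^s$. If one preferred to avoid the external references, the alternative would be to approximate $D_\varphi$ by its finite-rank truncations $D_{\varphi^{(N)}}$ (zeroing $\varphi$ past index $N$) in each step-space operator norm and pass to the limit; there the delicate step --- the one I would expect to cost the most work --- is deducing compactness on $X=\ind_k X_k$ from convergence in the step-space operator norms, which relies on the regularity of $X$.
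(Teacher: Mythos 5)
Your proposal is correct and follows essentially the same route as the paper: establish $D_\varphi(X_k)\subseteq X_k$ at the level of the Banach step spaces, invoke Lemma~\ref{C-incl} for continuity on $X$, and then deduce compactness from Proposition~4.5 and Proposition~4.2 of \cite{ABR4} and Proposition~4.13(ii) of \cite{BR3}, using $\hat\varphi=\varphi\in\bigcap_{s>1}\ell^s$. The only differences are cosmetic --- you spell out the non-expansive norm estimates on the step spaces and note the endpoint $p=\infty$ explicitly, where the paper simply refers back to the proof of Proposition~\ref{P-4-3}.
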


The following result will also be required.

\begin{prop}\label{P-5-2} Let $t\in [0,1)$ and $X$ be any (LB)-space in $\{\ell(p-), ces(p-), d(p-)\, :\, 1<p\leq \infty\}$. 
	\begin{itemize}
		\item[\rm (i)] The right-shift operator $S$ given by \eqref{Opshift} maps $X$ into $X$ and belongs to $\cL(X)$.
		\item[\rm (ii)] The generalized Cesàro operator $C_t$ maps $X$ into $X$ and satisfies $C_t\in \cL(X)$.
		\item[\rm (iii)] The operator $R_t$ given by \eqref{serie-R} maps $X$ into $X$ and belongs to $\cL(X)$, with the series $\sum_{n=0}^\infty t^nS^n$ being convergent in $\cL_s(X)$. Moreover,
		\begin{equation}\label{eq.5-1}
			C_t=D_\varphi R_t=\sum_{n=0}^\infty t^n D_\varphi S^n.
			\end{equation}
	\end{itemize}
	\end{prop}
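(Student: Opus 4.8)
The plan is to mirror the strategy used for the Fr\'echet spaces in Proposition \ref{P-4-4}, replacing the countably-normed machinery by the (LB)-space tools of Section 2. Throughout, write $X=\ind_k X_k$ with $X_k\in\{\ell^{p_k},ces(p_k),d_{p_k}\}$; note that even when $p=\infty$ each step index $p_k$ is finite, so every Banach-space fact recalled in Section 2 applies to the constituent spaces $X_k$. We use freely that $X\subseteq\omega$ continuously, that $X$ is \emph{regular} (Lemma \ref{Reg-LB}), and that $X$ is barrelled. For parts (i) and (ii), recall that $S\in\cL(\omega)$ (via \eqref{est-S}) and $C_t\in\cL(\omega)$ (Proposition \ref{nocompact-omega}). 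Each step space is invariant: $S(X_k)\subseteq X_k$ because $S$ is an isometry on $\ell^{p_k}$, satisfies $\|S\|_{ces(p_k)\to ces(p_k)}\le 1$ by \cite[Lemma 5.4]{CR4}, and $\|S\|_{d_{p_k}\to d_{p_k}}=2^{1/p_k}$ by \cite[Lemma 6.2]{CR4}; likewise $C_t(X_k)\subseteq X_k$ by Propositions \ref{Sp-spazilp}, \ref{Sp-spazices} and \ref{Sp-spazidp}. Hence $S(X)\subseteq X$ and $C_t(X)\subseteq X$, and Lemma \ref{C-incl} (applied to $T:=S$ and to $T:=C_t$, both in $\cL(\omega)$) yields $S,C_t\in\cL(X)$. (Alternatively, $S|_{X_k}\in\cL(X_k)$ and $C_t|_{X_k}\in\cL(X_k)$, so Lemma \ref{LB-Op} applies directly.)

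For part (iii), fix $x\in X$. By regularity there is $k\in\N$ with $x\in X_k$. On the Banach space $X_k$ the iterates satisfy $\|S^n\|_{\ell^{p_k}\to\ell^{p_k}}=1$, $\|S^n\|_{ces(p_k)\to ces(p_k)}\le 1$, and $\|S^n\|_{d_{p_k}\to d_{p_k}}=(n+1)^{1/p_k}$ (cf. \eqref{eq.4-11}, which records \cite[Lemma 6.2]{CR4}). Since $0\le t<1$, in every case $\sum_{n=0}^\infty t^n\|S^n\|_{X_k\to X_k}<\infty$, so $\sum_{n=0}^\infty t^nS^nx$ converges absolutely in $X_k$, hence converges in $X$ by continuity of the inclusion $X_k\subseteq X$. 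Thus the series $\sum_{n=0}^\infty t^nS^n$ converges in $\cL_s(X)$; its sum is a linear self-map of $X$ that coincides with the $\omega$-operator $R_t$ of \eqref{serie-R} restricted to $X$. In particular $R_t(X)\subseteq X$, and since $R_t\in\cL(\omega)$, Lemma \ref{C-incl} gives $R_t\in\cL(X)$. (Equivalently, $R_t$ restricts to a bounded operator on each $X_k$ with $\|R_t\|_{X_k\to X_k}\le\sum_{n}t^n\|S^n\|_{X_k\to X_k}$, so $R_t\in\cL(X)$ also follows from Lemma \ref{LB-Op}.)

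Finally, the identities $C_t=D_\varphi R_t=\sum_{n=0}^\infty t^nD_\varphi S^n$ hold in $\cL(\omega)$ by Lemma \ref{L-4-1} and \eqref{Rep}, and all operators involved (together with the partial sums) lie in $\cL(X)$ and are the restrictions to $X$ of their $\omega$-counterparts, by parts (i), (ii), Proposition \ref{P-5-1}, and the paragraph above. Since $X\subseteq\omega$ continuously, these operator identities persist in $\cL(X)$, which yields \eqref{eq.5-1}.

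I expect no serious obstacle here: the proposition is essentially a transcription of the Banach-space estimates for $S$, $C_t$ and $D_\varphi$ through the inductive limit. The only point requiring a moment's care is the case $X=d(p-)$, where $\|S^n\|_{d_{p_k}\to d_{p_k}}=(n+1)^{1/p_k}$ grows; but the growth is only polynomial, so $\sum_n t^n(n+1)^{1/p_k}<\infty$ for $t<1$ still gives the needed absolute summability. The remaining work is the routine bookkeeping of verifying all three assertions simultaneously for each of the three families $\ell(p-)$, $ces(p-)$, $d(p-)$, and checking that the endpoint $p=\infty$ contributes nothing new since the constituent spaces remain ordinary reflexive Banach sequence spaces.
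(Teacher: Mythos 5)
Your proposal is correct. Parts (i) and (ii) coincide with the paper's argument: step-space invariance of $S$ and $C_t$ (isometry on $\ell^{p_k}$, $\|S\|_{ces(p_k)\to ces(p_k)}\le 1$, $\|S^m\|_{d_{p_k}\to d_{p_k}}=(m+1)^{1/p_k}$, and the Banach-space results for $C_t$), followed by Lemma \ref{C-incl}.

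For part (iii) you take a genuinely more direct route. The paper first proves a \emph{Claim} that the partial sums $\{\sum_{n=0}^{k}t^nS^n\}_k$ form an equicontinuous subset of $\cL(X)$ (via barrelledness of $X$ and regularity of the inductive limit, using the same norm estimates you quote), and then establishes convergence of the series only on the dense subspace ${\rm span}\,\cE$ -- which requires knowing that $\cE$ is a Schauder basis of $X$ and computing $\|e_j\|_{X_1}$ in each of the three cases -- before extending to all of $X$ by equicontinuity. You instead observe that any $x\in X$ lies in some step space $X_k$ (this is just $X=\cup_kX_k$; your appeal to ``regularity'' here is unnecessary, though harmless), that $\sum_n t^n\|S^n\|_{X_k\to X_k}<\infty$ in all three cases, and hence that the series converges absolutely in the Banach space $X_k$ and therefore in $X$. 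This yields pointwise convergence on all of $X$ at once, and continuity of the limit operator then comes from Lemma \ref{C-incl} (or Banach--Steinhaus on the barrelled space $X$), so the density/equicontinuity step and the basis property of $\cE$ are not needed. Both arguments rest on the same operator-norm estimates; yours is shorter, while the paper's version also records the equicontinuity of the partial sums explicitly, which is occasionally useful elsewhere. The identification of the sum with the restriction of the $\omega$-operator $R_t$, and the persistence of the factorization $C_t=D_\varphi R_t$ from $\cL(\omega)$ to $\cL(X)$, are handled as in the paper.
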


\begin{proof}
	(i) It was observed in the proof of Proposition \ref{P-4-4}(ii) that $S\in \cL(\omega)$ as well as $S(\ell^{p_k})\subseteq \ell^{p_k}$ and  $S(ces(p_k))\subseteq ces(p_k)$ and  $S(d_{p_k})\subseteq d_{p_k}$, for each $k\in\N$, from which it is clear that $S(X)\subseteq X$. By Lemma \ref{C-incl} it follows that $S\in \cL(X)$.
	
	(ii) In each of the three cases $\ell(p-)$, $ces(p-)$, $d(p-)$ for $X$ it is clear that $C_t\colon\omega\to\omega$ (cf. \eqref{Ces-op}) satisfies $C_t(X_k)\subseteq X_k$ for all $k\in\N$ (see the proof of Proposition \ref{P-4-4}(i)) and hence, $C_t(X)\subseteq X$. Since $C_t\in \cL(\omega)$, via Proposition \ref{nocompact-omega}, again by Lemma \ref{C-incl} we can conclude that $C_t\in  \cL(X)$.
	
	(iii) According to part (i) the sequence $\{\sum_{n=0}^kt^nS^n\}_{k\in\N_0}\subseteq \cL(X)$.
	
	\textit{Claim}. $\{\sum_{n=0}^kt^nS^n\, :\, k\in\N_0\}$ \textit{is an equicontinuous subset of } $\cL(X)$.
	
	Suppose first that $X=\ell(p-)$ or $X=ces(p-)$. Since $X$ is barrelled, to establish the \textit{Claim} it suffices to show, for each $x\in X$, that
\[
B(x):=\{\sum_{n=0}^kt^nS^nx\, :\, k\in\N_0\}
\]
is a bounded subset of $X=\ind_r X_r$. Since $X$ is a regular (LB)-space, the set $B(x)$ will be bounded if there exists $m\in\N$ such that $B(x)\subseteq X_m$ and $B(x)$ is bounded in the Banach space $X_m$. But, $x\in X=\cup_{r=1}^\infty X_r$ and so there exists $m\in\N$ such that $x\in X_m$. Since $S^n\in \cL(X_m)$ for all $n\in\N_0$, it is clear that $B(x)\subseteq X_m$. Moreover, in the proof of Proposition \ref{P-4-4}(ii) it was noted that $\|S\|_{X_m\to X_m}\leq 1$ and hence, $\|S^n\|_{X_m\to X_m}\leq 1$ for all $n\in\N_0$. Accordingly,
\[
\|\sum_{n=0}^k t^n S^n x\|_{X_m}\leq \sum_{n=0}^\infty t^n \|S^n x\|_{X_m}\leq \sum_{n=0}^\infty t^n \|S^n\|_{X_m\to X_m}\|x\|_{X_m}\leq \frac{\|x\|_{X_m}}{(1-t)},\quad k\in\N_0,
\]
which implies that $B(x)$ is a bounded set in $X_m$. In the event that $X=d(p-)$, an analogous argument applies except that now $X_m=d_{p_m}$ and so $\|S^n\|_{d_{p_m}\to d_{p_m}}=(n+1)^{1/p_m}$ for $n\in\N_0$; see \eqref{eq.4-11}. In this case the previous inequality becomes
\[
\|\sum_{n=0}^k t^n S^n x\|_{d_{p_m}}\leq (\sum_{n=0}^\infty t^n (n+1)^{1/p_m})\|x\|_{d_{p_m}},\quad k\in\N_0, 
\]
which implies that $B(x)$ is a bounded set in $d_{p_m}$ as $\sum_{n=0}^\infty t^n (n+1)^{1/p_m}<\infty$. The proof of the \textit{Claim} is thereby complete.

In view of the \textit{Claim}, to show that the series $\sum_{n=0}^\infty t^n S^n$ converges in $\cL_s(X)$ it suffices to show that the limit
\begin{equation}\label{eq.5-2}
	R_tx:=\lim_{k\to\infty}\sum_{n=0}^k
t^n S^n x=\sum_{n=0}^\infty t^n S^n x
\end{equation}
exists in $X$ for all $x\in X$ in some dense subset of $X$. Since $\cE$ is a Schauder basis for $X$, its linear span ${\rm span}\, \cE$ is a dense subspace of $X$ and so it suffices to show that the limit in \eqref{eq.5-2} exists for each $x\in \cE$. Let 
$x:=e_r=(0,\ldots, 0,1,0, \ldots)$,
for any fixed $r\in\N_0$, where $1$ is in position $r$. Then $S^ne_r=e_{r+n}$ for all $n\in\N_0$. Fix $k\in\N_0$. It follows that 
\begin{equation}\label{eq.5-3}
	\sum_{n=0}^kt^nS^ne_r=\sum_{n=0}^kt^n e_{r+n}=(0,\ldots, 1, t, t^2,\ldots, t^k, 0,0,\ldots),
	\end{equation}
where $1$ is in position $r$ and $t^k$ is in position $r+k$. Observe that $\|e_j\|_{\ell^{p_1}}=1$ for $j\in\N_0$. Direct calculation via \eqref{norma} shows that $\|e_j\|_{d_{p_1}}=(j+1)^{1/p_1}$, for $j\in\N_0$, and by Lemma 4.7 in \cite{Be}, there exists $K>0$ such that $\|e_j\|_{ces(p_1)}\leq K$ for all $j\in\N_0$. It follows that $\sum_{j=r}^\infty t^j \|e_j\|_{\ell^{p_1}}=\frac{t^r}{(1-t)}\leq \frac{1}{(1-t)}$, that $\sum_{j=r}^\infty t^j \|e_j\|_{ces(p_1)}\leq \frac{Kt^r}{(1-t)}\leq \frac{K}{(1-t)}$ and that $\sum_{j=r}^\infty t^j \|e_j\|_{d_{p_1}}\leq \sum_{j=r}^\infty t^j(j+1)^{1/p_1}<\infty$. Accordingly, the series
	\begin{equation}\label{eq.5-4}
		y^{[r]}:=\sum_{j=r}^\infty t^je_j=(0,\ldots,0, 1,t, t^2, \ldots),
		\end{equation}
	with $1$ in position $r$, is absolutely convergent in the Banach space $X_1$ belonging to $\{\ell^{p_1}, ces(p_1), d_{p_1}\}$ and defines an element of $X_1$, that is, $y^{[r]}\in X_1$. Since the inclusion $X_1\subseteq X$ is continuous, the series \eqref{eq.5-4} is also convergent to $y^{[r]}$ in $X$. For any $k>r$ we have
	\[
	\|y^{[r]}-\sum_{n=0}^kt^n S^n e_r\|_{X_1}=\|\sum_{j=r+k+1}^\infty t^j e_j	\|_{X_1}\to 0, \quad k\to\infty,
	\]
	being the tail of the absolutely convergent series \eqref{eq.5-2}. So, the sequence in \eqref{eq.5-3} converges to $y^{[r]}$ in $X_1$ for $k\to\infty$ and hence, also to $y^{[r]}$ in $X$. Since $r\in\N_0$ is arbitrary, we have proved that the limit in \eqref{eq.5-2} exists in $X$ for each $x\in {\rm span}\, \cE$ and hence, by the \textit{Claim}, it exists for \textit{every} $x\in X$. Accordingly, the limit operator $R_t=\lim_{k\to\infty}\sum_{n=0}^k t^n S^n$ exists in $\cL_s(X)$. Since $D_\varphi, R_t, C_t\in \cL(X)$ and $X\subseteq \omega$ continuously, the equality $C_t=D_\varphi R_t=\sum_{n=0}^\infty t^n D_\varphi S^n$ follows from Proposition \ref{P-4-2}.
\end{proof}

The main result of this section is as follows.

\begin{theorem}\label{Spectrum-LB} Let $t\in [0,1)$ and $X$ be any (LB)-space in $\{\ell(p-), ces(p-),d(p-):\, 1<p\leq\infty\}$.
	\begin{itemize}
		\item[\rm (i)] The generalized Cesàro operator $C_t\in \cL(X)$ is compact.
		\item[\rm (ii)] The spectra of $C_t$ are given by
		\begin{equation}\label{spectra-pt-LB}
			\sigma_{pt}(C_t;X)=\Lambda
		\end{equation}
		and
		\begin{equation}\label{spectra-LB}
			\sigma^*(C_t;X)=\sigma(C_t;X)=\Lambda\cup\{0\}.
		\end{equation}
		\item[\rm (iii)] For each $\lambda\in \sigma_{pt}(C_t;X)$ the subspace $(\lambda I-C_t)(X)$ is closed in $X$ with ${\rm codim}\, (\lambda I- C_t)(X)=1$. Moreover, the $1$-dimensional eigenspace $\Ker (\frac{1}{m+1}I-C_t)={\rm span}(x^{[m]})$, for each $m\in\N_0$, with $x^{[m]}\in d_1\subseteq X$ given by \eqref{eigenvalue}.
	\end{itemize}
	\end{theorem}

\begin{proof}
	(i) Since $D_\varphi\in\cL(X)$ is compact (cf. Proposition \ref{P-5-1}) and $R_t\in \cL(X)$ (cf. Proposition \ref{P-5-2}(iii)), the compactness of $C_t\in \cL(X)$ follows from the factorization in \eqref{eq.5-1} and Lemma \ref{Comp-Op}.
	
	(ii) The (LB)-space $X=\ind_k X_k$ is an inductive limit of the type in Lemma \ref{Sp-LB}. Moreover, $T:=C_t\in \cL(X)$ has the property, for each $k\in\N$, that the restriction $T_k$ of $T$ to the Banach space $X_k$ maps $X_k$ into itself and satisfies $T_k\in \cL(X_k)$. That is, $T$ satisfies condition (A\'\,) of Lemma \ref{Sp-LB}. Then, by  Lemma \ref{Sp-LB}(i) it follows that $\sigma_{pt}(C_t;X)=\cup_{k=1}^\infty\sigma_{pt}(T_k; X_k)=\Lambda$ (cf. Propositions \ref{Sp-spazilp}, \ref{Sp-spazices} and \ref{Sp-spazidp}). Since $C_t\in \cL(X)$ is compact by part (i), the analogous argument used to prove \eqref{eqq.4-15}, now with \eqref{spectra-pt-X} replaced by \eqref{spectra-pt-LB}, can be used to show that 
	\begin{equation}\label{eq.5-8}
		\sigma(C_t;X)=\Lambda\cup\{0\}.
		\end{equation}
	Moreover, $\sigma(T_k;X_k)=\sigma (C_t;X_k)=\Lambda\cup \{0\}$ for every $k\in\N$ and so, for $m=1$ say, we note (via \eqref{eq.5-8}) that
	\[
	\cup_{k=m}^\infty\sigma(T_k; X_k)=\Lambda \cup\{0\}\subseteq \ov{\sigma(T;X)}.
	\]
	We can conclude again from Lemma \ref{Sp-LB}(ii) that $\sigma^*(C_t;X)=\ov{\sigma(C_t;X)}$. Combined with \eqref{eq.5-8} this yields \eqref{spectra-LB}.

	(iii) The analogous argument used to prove part (iii) of Theorem \ref{Sp-F} also applies to establish the given statement. Again, since $d_1\subseteq X$ (see the introduction to Section 5), it follows that  $\Ker (\frac{1}{m+1}I-C_t)={\rm span}(x^{[m]})$, for each $m\in\N_0$.
\end{proof}

\begin{remark}\label{R-5-4}
	\rm (i) An examination of the arguments given in Remark \ref{R-4-6} shows that, when suitably adapted, they also apply here to conclude that $C_t(X)$ is a proper, dense subspace of $X$. That is, $0$ belongs to the \textit{continuous spectrum} of $C_t$. 
	
	(ii) Concerning $t=1$, it is known that $\sigma_{pt}(C_1;ces(p-))=\emptyset$, \cite[Proposition 3.1]{ABR4} with 
	\begin{equation}\label{eq.5-9}
		\{0\}\cup\left\{z\in\C\,:\, \left|z-\frac{p'}{2}\right|<\frac{p'}{2}\right\}\subseteq \sigma(C_1; ces(p-))\subseteq \left\{z\in\C\,:\, \left|z-\frac{p'}{2}\right|\leq\frac{p'}{2}\right\}
		\end{equation}
	and
	\begin{equation}\label{eq.5-10}
		\sigma^*(C_1;ces(p-))=\left\{z\in \C\,:\, \left|z-\frac{p'}{2}\right|\leq\frac{p'}{2}\right\}=\ov{\sigma(C_1;ces(p-))},\ 1<p\leq\infty,
		\end{equation}
	\cite[Propositions 3.2 \& 3.3]{ABR4}.
	
	For the (LB)-space $d(p-)$, both \eqref{eq.5-9} and \eqref{eq.5-10} are also valid (with $d(p-)$ in place of $ces(p-)$) as well as $\sigma_{pt}(C_1; d(p-))=\emptyset$, for all $1<p\leq\infty$; see  Theorem 3.6 in \cite{BR3}.
\end{remark}

The spectrum of $C_1$ acting in $\ell(p-)$ is covered by the next result. 

 Recall that the space  $\ell(p'+)$ is the strong dual of $\ell(p-)$, \cite[Proposition 3.4(i)]{BR2}, and that the dual operator $C_1'\in \cL(\ell(p'+))$ of $C_1\in \cL(\ell(p-))$ is given by
\[
C_1'x=\left(\sum_{i=n}^\infty\frac{x_i}{i+1}\right)_{n\in\N_0},\quad x=(x_n)_{n\in\N_0}\in \ell(p'+),
\]
see, for instance, \cite[p.123]{Le}.

\begin{prop}\label{Sp-lp-} Let $p\in (1,\infty]$ and let $p'\in [1,\infty)$ satisfy $\frac{1}{p}+\frac{1}{p'}=1$.
	\begin{itemize}
		\item[\rm (i)] $\sigma_{pt}(C_1;\ell(p-))=\emptyset$   and $\{z\in\C\,:\, |z-\frac{p'}{2}|<\frac{p'}{2}\}\subseteq \sigma_{pt}(C_1';\ell(p'+))$.
		\item[\rm (ii)] $\{0\}\cup\{z\in\C\,:\, |z-\frac{p'}{2}|<\frac{p'}{2}\}\subseteq \sigma(C_1;\ell(p-))\subseteq \{z\in\C\,:\, |z-\frac{p'}{2}|\leq \frac{p'}{2}\}$.
		\item[\rm (iii)] $\sigma^*(C_1;\ell(p-))=\{z\in\C\,:\, |z-\frac{p'}{2}|\leq \frac{p'}{2}\}=\ov{\sigma(C_1; \ell(p-))}$.
	\end{itemize}
	\end{prop}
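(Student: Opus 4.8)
The plan is to lift the Banach--space information of Proposition~\ref{Sp-C1-lp} (with $p$ replaced by each $p_k$) to the inductive limit $\ell(p-)=\ind_k\ell^{p_k}$, where $\{p_k\}\su(1,p)$ increases to $p$, so that the conjugates $p_k'$ \emph{decrease} to $p'$ and the discs $\sigma(C_1;\ell^{p_k})=\{z:|z-\frac{p_k'}{2}|\leq\frac{p_k'}{2}\}$ shrink to $\{z:|z-\frac{p'}{2}|\leq\frac{p'}{2}\}$. Besides Lemma~\ref{Sp-LB} I will use duality, via Corollary~\ref{SpettroDuale}, which applies since $\ell(p-)$ is a complete, barrelled (LB)-space with $(\ell(p-))'_\beta=\ell(p'+)$ and $C_1'$ equal to the operator displayed above; and a direct construction of eigenvectors of $C_1'$.

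First, $\sigma_{pt}(C_1;\ell(p-))=\es$: an eigenvector $0\neq x\in\ell(p-)$ of $C_1$ lies in some $\ell^{p_k}$, contradicting $\sigma_{pt}(C_1;\ell^{p_k})=\es$ (equivalently, invoke Lemma~\ref{Sp-LB}(i)). Next, to prove the second claim of (i), fix $z$ with $|z-\frac{p'}{2}|<\frac{p'}{2}$; expanding this inequality gives $|z|^2<p'\RR z$, hence $\RR z>0$ and $\RR(1/z)=\RR(z)/|z|^2>1/p'$. Solving $C_1'x=zx$ coordinatewise: setting $y_n:=(C_1'x)_n=\sum_{i\geq n}\frac{x_i}{i+1}$, so that $y_n-y_{n+1}=\frac{x_n}{n+1}$, the equation forces $zx_{n+1}=(z-\frac{1}{n+1})x_n$, i.e. $x_n=x_0\prod_{j=1}^{n}(1-\frac{1}{zj})$. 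The estimate $\log|x_n/x_0|=\sum_{j=1}^{n}\log|1-\frac{1}{zj}|=-\RR(1/z)\log n+O(1)$ yields $|x_n|\asymp n^{-\RR(1/z)}$ (or $x$ is finitely supported, should some factor vanish); since $\RR(1/z)>1/p'$, it follows that $\sum_n|x_n|^q<\infty$ for every $q>p'$, i.e. $0\neq x\in\ell(p'+)$. One then checks directly that $C_1'x=zx$: the series defining $C_1'x$ converges because $\sum_i\frac{|x_i|}{i+1}<\infty$, and both $(zx_n)_n$ and $((C_1'x)_n)_n$ satisfy the recursion $u_n-u_{n+1}=\frac{x_n}{n+1}$ and tend to $0$, hence coincide. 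Thus $\{z:|z-\frac{p'}{2}|<\frac{p'}{2}\}\su\sigma_{pt}(C_1';\ell(p'+))$.

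For part (ii), the inclusion $\sigma(C_1;\ell(p-))\su\{z:|z-\frac{p'}{2}|\leq\frac{p'}{2}\}$ comes from Lemma~\ref{Sp-LB}(iii), since $\bigcap_m\{z:|z-\frac{p_m'}{2}|\leq\frac{p_m'}{2}\}=\{z:|z-\frac{p'}{2}|\leq\frac{p'}{2}\}$. For the reverse inclusion, the previous paragraph and Corollary~\ref{SpettroDuale} give $\{z:|z-\frac{p'}{2}|<\frac{p'}{2}\}\su\sigma_{pt}(C_1';\ell(p'+))\su\sigma(C_1';\ell(p'+))=\sigma(C_1;\ell(p-))$, and $0\in\sigma(C_1;\ell(p-))=\sigma(C_1';\ell(p'+))$ because $C_1'$ fails to be surjective on $\ell(p'+)$: the sequence $y_n=(-1)^n(n+2)^{-1/p'}$ belongs to $\ell(p'+)$, yet any $x$ with $C_1'x=y$ would have $\sum_n\frac{|x_n|}{n+1}=\sum_n|y_n-y_{n+1}|=\infty$, which is impossible for $x\in\ell(p'+)\su\ell^q$ with $p'<q<\infty$ (H\"older). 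This gives (ii).

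For part (iii), passing to closures in (ii) and using that $0$ lies on the circle $|z-\frac{p'}{2}|=\frac{p'}{2}$ shows $\ov{\sigma(C_1;\ell(p-))}=\{z:|z-\frac{p'}{2}|\leq\frac{p'}{2}\}$, which is the second equality; and since $\sigma^*(C_1;\ell(p-))$ is closed and contains $\sigma(C_1;\ell(p-))$, it contains this disc. Conversely, fix $z_0$ with $|z_0-\frac{p'}{2}|>\frac{p'}{2}$ and $0<\de<\mathrm{dist}(z_0,\{z:|z-\frac{p'}{2}|\leq\frac{p'}{2}\})$; then $B(z_0,\de)\su\rho(C_1;\ell(p-))$ by the upper bound already proved. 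As $\ell(p-)$ is barrelled, it remains to check that $\{R(\mu,C_1):\mu\in B(z_0,\de)\}$ is pointwise bounded in $\ell(p-)$. Given $x\in\ell(p-)$, choose $k$ with $x\in\ell^{p_k}$ and, enlarging $k$ — which is possible because the discs $\sigma(C_1;\ell^{p_k})$ shrink to a compact set lying at distance $>\de$ from $z_0$ — with $\ov{B(z_0,\de)}\su\rho(C_1;\ell^{p_k})$; then $\mu\mapsto\|R(\mu,C_1)\|_{\ell^{p_k}\to\ell^{p_k}}$ is bounded on the compact set $\ov{B(z_0,\de)}$, so $\{R(\mu,C_1)x:\mu\in B(z_0,\de)\}$ is bounded in $\ell^{p_k}$, hence (by regularity of the (LB)-space) in $\ell(p-)$. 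Therefore $z_0\in\rho^*(C_1;\ell(p-))$, and $\sigma^*(C_1;\ell(p-))=\{z:|z-\frac{p'}{2}|\leq\frac{p'}{2}\}$.

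The two delicate points are: identifying $\{z:|z-\frac{p'}{2}|<\frac{p'}{2}\}$ with $\{z:\RR(1/z)>1/p'\}$ and carrying out the asymptotics of $\prod_{j=1}^{n}(1-\frac{1}{zj})$ — this is exactly what produces the disc of radius $p'/2$ — and, in part (iii), the fact that the natural tool Lemma~\ref{Sp-LB}(ii) is \emph{not} available, since the Banach spectra $\{z:|z-\frac{p_k'}{2}|\leq\frac{p_k'}{2}\}$ are strictly larger than $\ov{\sigma(C_1;\ell(p-))}$; consequently the equicontinuity of the resolvent family must be obtained by hand, and I expect this to be the main obstacle.
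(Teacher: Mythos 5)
Your proposal is correct, and its overall skeleton coincides with the paper's: part (i) via Lemma~\ref{Sp-LB}(i) and the explicit eigenvectors of $C_1'$, the upper bound in (ii) via Lemma~\ref{Sp-LB}(iii) and the shrinking discs $\ov{B(p_k'/2,p_k'/2)}$, and (iii) by an explicit equicontinuity argument for the resolvents (your observation that Lemma~\ref{Sp-LB}(ii) is unavailable because its hypothesis fails is exactly why the paper, too, must argue by hand, deferring to the analogue in \cite{ABR4}). The genuine differences are local. For the eigenvectors of $C_1'$ you rederive the recursion $x_{n+1}=(1-\frac{1}{z(n+1)})x_n$ and the asymptotics $|x_n|\asymp n^{-\RR(1/z)}$ from scratch, where the paper simply cites Leibowitz \cite[Theorem 1(b)]{Le} for membership in each $\ell^{p_k'}$ and notes the eigenvector is independent of $k$; same content, yours is self-contained. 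More substantively, for the inclusion $\{|z-\frac{p'}{2}|<\frac{p'}{2}\}\subseteq\sigma(C_1;\ell(p-))$ you pass through duality, $\sigma_{pt}(C_1';\ell(p'+))\subseteq\sigma(C_1';\ell(p'+))=\sigma(C_1;\ell(p-))$ by Corollary~\ref{SpettroDuale}, whereas the paper argues on the primal side: if $\lambda\in\rho(C_1;\ell(p-))$ then $(\lambda I-C_1)(\ell(p-))$ is dense in $\ell^p$, contradicting the non-density of the range of $C_1-\lambda I$ in $\ell^p$ from Proposition~\ref{Sp-C1-lp} when $|\lambda-\frac{p'}{2}|<\frac{p'}{2}$. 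Your route is arguably cleaner and applies uniformly, including $p=\infty$ where density of $\ell(\infty-)$ in $\ell^\infty$ is delicate; it does lean on the identification $(\ell(p-))'_\beta\simeq\ell(p'+)$ and on $C_1'$ having the stated form, both of which the paper records. Similarly, you obtain $0\in\sigma$ from non-surjectivity of $C_1'$ on $\ell(p'+)$ (alternating sequence, telescoping, H\"older), while the paper exhibits $y\in\ell^{p_1}$ whose unique $\omega$-preimage $((-1)^n)_n$ under $C_1$ lies outside $\ell(p-)$; both are sound. All steps I checked — the equivalence $|z-\frac{p'}{2}|<\frac{p'}{2}\Leftrightarrow\RR(1/z)>1/p'$, the telescoping verification of $C_1'x=zx$, and the choice of a large $k$ with $\ov{B(z_0,\delta)}\subseteq\rho(C_1;\ell^{p_k})$ in (iii) — are correct.
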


\begin{proof}
	(i) The first part of  (i) follows from Lemma \ref{Sp-LB}(i), the definition $\ell(p-)=\cup_{k=1}^\infty\ell^{p_k}$ with $1<p_k\uparrow p$, and the fact that $\sigma_{pt}(C_1;\ell^q)=\emptyset$ for every $1<q<\infty$; see Proposition \ref{Sp-C1-lp}(ii). 
	
	To establish the second part,
		fix $z\in\C$ with $|z-\frac{p'}{2}|<\frac{p'}{2}$. Since $1<p_k\uparrow p$, it follows that $p'_k\downarrow p'$ and hence, the open disk $B(\frac{p'}{2},\frac{p'}{2})\subseteq B(\frac{p'_k}{2},\frac{p'_k}{2})$ for every $k\in\N$. Accordingly, $|z-\frac{p'_k}{2}|<\frac{p_k'}{2}$
for all $k\in\N$. So, by \cite[Theorem 1(b)]{Le},  for each $k\geq 1$ there exists $x_k\in \ell^{p'_k}\setminus\{0\}$ such that $C'_1x_k=z x_k$ with $x_k=(x_{k,i})_{i\in\N_0}$ satisfying $x_{k,i+1}=x_{k,0}\prod_{h=0}^i(1-\frac{1}{z (h+1)})$ for all $i\in\N_0$ (see (1) on p.125 of \cite{Le}) for some  $x_{k,0}\in \C\setminus\{0\}$. Setting $x_{k,0}:=1$ for each $k\in\N$, it follows that $x_k=x_{1}=:x$ for all $k\in\N$ and hence,  $x\in \cap_{k\in\N}\ell^{p'_k}=(\ell(p-))'=\ell(p'+)$. On the other hand, it is clear that  $C_1'x=z x$. This shows the second part of (i).

(ii) To establish the second containment in (ii) we note that an analogous proof as that given for Proposition 3.2 in \cite{ABR4} also applies here. The use of Theorem 3.1 and Lemma 3.1(ii) there needs to be replaced, respectively, with the fact that $\sigma(C_1;\ell^q)=\{z\in\C\,:\, |z-\frac{q'}{2}|\leq \frac{q'}{2}\}$ for $1<q<\infty$ (cf. Proposition \ref{Sp-C1-lp}(ii)) and Lemma \ref{Sp-LB}(iii).

Concerning the first containment in (ii), observe that $C_1$ is \textit{not surjective} on $\ell(p-)$. Indeed, the element $y:=(\frac{1-(-1)^{n+1}}{2(n+1)})_{n\in\N_0}$ belongs to $\ell^{p_1}$ with $\ell^{p_1}\subseteq \ell(p-)$ and so $y\in \ell(p-)$. On the other hand, $x:=C_1^{-1}y=((-1)^n)_{n\in\N_0}$ belongs to $\omega$ but, $x\not\in \ell^{p_k}$ for every $k\in\N$ implies that $x\not\in \ell(p-)=\cup_{k=1}^\infty \ell^{p_k}$. Since $X$ is the unique element in $\omega$ satisfying $y=C_1x$ (as $C_1\in \cL(\omega)$ is a bicontinuous isomorphism), it follows that $y$ is \textit{not} in the range of $C_1\in \cL(\ell(p-))$ for every $1<p\leq \infty$. In particular, $0\in\sigma(C_1;\ell(p-))$. 

Fix $\lambda \in \C\setminus\{0\}$. If $\lambda\in \rho(C_1;\ell(p-))$, then $(\lambda I-C_1)(\ell(p-))=\ell(p-)$. Since $\ell(p-)$ is dense in $\ell^p$, it follows (with the bar denoting the closure in $\ell^p$) that
\[
\ell^p=\ov{\ell^p}=\ov{(\lambda I -C_1)(\ell(p-))}\subseteq \ov{(\lambda I- C_1)(\ell^p)}\subseteq \ell^p.
\]
By Proposition \ref{Sp-C1-lp} we can conclude that $|\lambda-\frac{p'}{2}|\geq \frac{p'}{2}$. Accordingly, $|\lambda-\frac{p'}{2}|<\frac{p'}{2}$ implies that $\lambda\in \sigma(C_1;\ell(p-))$.

(iii) An analogous argument used for the proof of Propostion 3.3 in  \cite{ABR4} also applies here. One only needs to replace the use of Proposition 3.2 and Theorem 3.1 there by part (ii) above and Proposition \ref{Sp-C1-lp}, respectively.
\end{proof}

\section{Dynamics of  the generalized Ces\`aro operators $C_t$ }
The aim of this section is to investigate the mean ergodicity and linear dynamics of the operator $C_t$, for $t\in [0,1]$, in $\omega$, in the Fr\'echet spaces $\{\ell(p+), ces(p+), d(p+):\, 1\leq p<\infty\}$ and in the (LB)-spaces $\{\ell(p-), ces(p-), d(p-):\, 1<p\leq \infty\}$. For the Banach spaces $\ell^1$, $d_1$ and $\ell^p$, $ces(p)$, $d_p$, for $1<p<\infty$, these results are also new. We also study the compactness, spectra and linear dynamics of the dual operators $C_t'$.

An operator $T\in \cL(X)$, with $X$ a lcHs, is called \textit{power bounded} if $\{T^n:\, n\in\N\}$
is an equicontinuous subset of $\cL(X)$. Here $T^n:= T \circ ...\circ T$ is the composition of T with itself $n$ times. For a Banach space $X$, this means precisely that
$\sup_{n\in\N}\|T^n\|_{X\to X}<\infty$.  Given $T\in\cL(X)$,  its sequence of averages
\begin{equation}\label{average}
	T_{[n]} := \frac{1}{n}\sum_{m=1}^nT^m,\quad  n\in \N,
\end{equation}
is called the \textit{Ces\`aro means} of $T$. The operator $T$ is said to be \textit{mean ergodic} (resp., \textit{uniformly mean ergodic}) if $(T_{[n]})_{n\in\N}$
	is a convergent sequence in $\cL_s(X)$ (resp., in $\cL_b(X)$). It follows from
	\eqref{average} that
	\[
	\frac{T^n}{n}= T_{[n]}- \frac{n-1}{n}T_{[n-1]},
	\]
	for $n\geq 2$. Hence, necessarily  $\frac{T^n}{n}\to 0$ in $\cL_s(X)$ (resp., in $\cL_b(X)$) as $n\to\infty$,
	whenever $T$ is
	mean ergodic (resp., uniformly mean ergodic). A relevant text is \cite{K}.
	
	Concerning the dynamics of a continuous linear operator $T$ defined on a separable	lcHs X, recall that $T$ is said to be \textit{hypercyclic} if there exists $x\in X$ whose orbit $\{T^nx:\, n\in\N_0\}$ is	dense in $X$. If, for some $x\in X$, the projective orbit $\{\lambda T^nx:\, \lambda\in\C,\ n\in\N_0\}$ is dense	in $X$, then $T$ is called \textit{supercyclic}. Clearly, any hypercyclic operator is also supercyclic.  As general	references, we refer to \cite{B-M,G-P}.
	
	We begin with a study of the dynamics of generalized Cesàro operators acting in $\omega$. For this, we will require, for each fixed $n\in\N_0$,  the combinatorial identity
	\begin{equation}\label{eq.Comb}
		\sum_{k=n-i}^n(-1)^{(n-i)-k}\binom{n+1}{k+1}=\binom{n}{i},\quad i=0,\ldots, n.
	\end{equation}
	For the proof we proceed by induction on $i=0,\ldots,n$. For $i=0$ observe that
	\[
	\sum_{k=n}^n (-1)^{n-k}\binom{n+1}{k+1}=(-1)^0\binom{n+1}{n+1}=1=\binom{n}{0}.
	\]
	Assume that \eqref{eq.Comb} is valid for some $0\leq i<n$. For $i+1$ it follows that
	\begin{align*}
		\sum_{k=n-(i+1)}^n &(-1)^{(n-i-1)-k}\binom{n+1}{k+1}=(-1)^0\binom{n+1}{n-i}+(-1)^{-1}\sum_{k=n-i}^n (-1)^{(n-i)-k}\binom{n+1}{k+1}\\
	&=\binom{n+1}{n-i}-\binom{n}{i}=\frac{(n+1)!}{(n-i)!(i+1)!}-\frac{n!}{i!(n-i)!}\\
	&=\frac{n!}{i!(n-i)!}\left[\frac{n+1}{i+1}-1\right]=\frac{n!}{(i+1)!(n-i-1)!}=\binom{n}{i+1}.
		\end{align*}
	Since this is identity \eqref{eq.Comb} for $i+1$, the proof is complete.

	\begin{theorem}\label{Dyn-omega} Let $t\in [0,1)$ and  $x^{[0]}:=\alpha_0(t^n)_{n\in\N_0}$ with $\alpha_0\in\C\setminus\{0\}$; see \eqref{eigenvalue}. 
		\begin{itemize}
		\item[\rm (i)] The  generalized Cesàro operator $C_t\in\cL(\omega)$ is power bounded and uniformly mean ergodic. 
		\item[\rm (ii)]
		$\Ker (I-C_t)={\rm span}\,\{x^{[0]}\}$ and the range
		\begin{equation}\label{eq.delta} (I-C_t)(\omega)=\{x\in\omega:\, x_0=0\}=\ov{{\rm span}\,\{e_r:\, r\in\N\}}
			\end{equation}
	of $(I-C_t)$	 is closed in $\omega$.
	\item[\rm (iii)]	 The operator $C_t$ is not supercyclic in $\omega$.
	\end{itemize}
		\end{theorem}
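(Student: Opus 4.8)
The plan is to prove part (ii) first and then the two assertions of part (i), since those use (ii), and finally part (iii). For part (ii), the inclusion $(I-C_t)(\omega)\subseteq\{x\in\omega:\ x_0=0\}$ is immediate from $(C_tx)_0=x_0$ (see \eqref{Matrix}). For the reverse inclusion I would solve $(I-C_t)x=y$ recursively: when $y_0=0$ the equation for coordinate $0$ is vacuous, and for $n\ge1$ equating $n$-th coordinates gives $\tfrac{n}{n+1}x_n=y_n+\tfrac{1}{n+1}(t^nx_0+\dots+tx_{n-1})$, which determines $x_n$ once $x_0$ has been fixed. Thus $(I-C_t)(\omega)=\{x_0=0\}$, which is the kernel of the continuous coordinate functional $x\mapsto x_0$, hence closed, and clearly equals $\ov{{\rm span}}\{e_r:\ r\in\N\}$. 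That $\Ker(I-C_t)={\rm span}\{x^{[0]}\}$ is Lemma \ref{L-3-3}(i) together with the simplicity of the eigenvalue $1$ established in Theorem \ref{Sp-omega}.

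Power boundedness in part (i) follows at once from \eqref{eq.ContS-omega}: since $r_n(C_tx)\le r_n(x)$ for all $n$ and $x$, iteration gives $r_n(C_t^kx)\le r_n(x)$ for every $k\in\N$, so $\{C_t^k:\ k\in\N\}$ is equicontinuous. For uniform mean ergodicity I would use the topological direct sum $\omega=N\oplus M$ supplied by part (ii), where $N:=\Ker(I-C_t)={\rm span}\{x^{[0]}\}$ and $M:=(I-C_t)(\omega)=\{x_0=0\}$ (these are closed and complementary because $x^{[0]}_0=\alpha_0\ne0$). Both summands are $C_t$-invariant, with $C_t$ acting as the identity on $N$ and as some $A:=C_t|_M\in\cL(M)$ on the Fréchet space $M$. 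On $M$ the operator $I-A$ is a continuous injection (its kernel is $N\cap M=\{0\}$) with image $(I-C_t)(M)=(I-C_t)(\omega)=M$ (the first equality because $(I-C_t)(N)=\{0\}$), hence — by the open mapping theorem — a topological isomorphism; put $V:=(I-A)^{-1}\in\cL(M)$. The elementary identity $\sum_{m=1}^nA^m=(A-A^{n+1})V$ then gives
\[
(C_t|_M)_{[n]}=\tfrac1n\bigl(AV-A^{n+1}V\bigr),\qquad n\in\N.
\]
Since $\{A^{n+1}:\ n\in\N\}$ inherits the equicontinuity of $\{C_t^k\}$, the set $\{A^{n+1}V:\ n\in\N\}$ is bounded in $\cL_b(M)$, so both summands on the right tend to $0$ in $\cL_b(M)$; as $(C_t|_N)_{[n]}$ is the identity on $N$ for every $n$, we conclude that $(C_t)_{[n]}\to P$ in $\cL_b(\omega)$, where $P$ is the projection of $\omega$ onto $N$ along $M$. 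Hence $C_t$ is uniformly mean ergodic. (A more computational alternative: telescoping the powers of $t$ yields the matrix identity $(C_t^m)_{jk}=t^{j-k}(C_1^m)_{jk}$, and the combinatorial identity \eqref{eq.Comb} produces the spectral-projection form of $C_1^m$, from which the Ces\`aro means can be summed directly.)

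For part (iii) I would pass to a two-dimensional quotient. The subspace $M_1:=\{x\in\omega:\ x_0=x_1=0\}$ is closed and $C_t$-invariant, since $(C_tx)_0=x_0$ and $(C_tx)_1=\tfrac12(tx_0+x_1)$; the map $x\mapsto(x_0,x_1)$ identifies $\omega/M_1$ with $\C^2$, and the operator induced by $C_t$ on $\omega/M_1$ is $(x_0,x_1)\mapsto(x_0,\tfrac12(tx_0+x_1))$, which is lower triangular in these coordinates with the two distinct eigenvalues $1$ and $\tfrac12$, hence diagonalizable. But an operator on $\C^2$ similar to ${\rm diag}(1,\tfrac12)$ is never supercyclic: the projective orbit of any $[a:b]\in\C^2$ lies in $\{[2^na:b]:\ n\in\N_0\}\cup\{[1:0]\}$, which accumulates only at $[1:0]$ and so is not dense. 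Since supercyclicity passes to quotients by closed invariant subspaces — the quotient map being a continuous open surjection that intertwines the operators — a supercyclic $C_t$ on $\omega$ would induce a supercyclic operator on $\C^2$, which is impossible. Therefore $C_t$ is not supercyclic in $\omega$.

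The only genuinely delicate step is the \emph{uniform} (as opposed to merely strong) convergence of the Ces\`aro means in part (i): this is precisely where power boundedness, i.e.\ the equicontinuity of $\{C_t^n:\ n\in\N\}$, is used — it upgrades the obvious pointwise estimate for $A^{n+1}V$ to a bound uniform on bounded subsets of $M$, so that the convergence takes place in $\cL_b$ and not only in $\cL_s$.
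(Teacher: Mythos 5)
Your proof is correct, but it takes a genuinely different route from the paper's at all three key points. For the range identity in (ii) the paper first invokes its mean ergodic decomposition machinery (\cite[Theorem 3.5]{ABR00}, which requires power boundedness and yields closedness of $(I-C_t)(\omega)$ and the splitting $\omega=\Ker(I-C_t)\oplus(I-C_t)(\omega)$ for free) and then shows $e_r\in(I-C_t)(\omega)$ by induction via the identity $(I-C_t)(e_n-te_{n+1})=\frac{n}{n+1}e_n-te_{n+1}$; you instead solve $(I-C_t)x=y$ coordinate by coordinate, which is more elementary, makes the closedness of the range transparent (kernel of the coordinate functional $x\mapsto x_0$), and frees (ii) from any dependence on (i). For uniform mean ergodicity the paper simply cites that $\omega$ is Montel together with \cite[Proposition 2.8]{ABR0}, whereas you run a self-contained argument on the $C_t$-invariant topological splitting $\omega={\rm span}\{x^{[0]}\}\oplus\{x_0=0\}$, inverting $I-A$ on the complement by the open mapping theorem and telescoping $\sum_{m=1}^{n}A^m=(A-A^{n+1})(I-A)^{-1}$; this is longer but buys the explicit limit $(C_t)_{[n]}\to P$ (the projection onto the fixed space) and would survive in non-Montel settings. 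For (iii) the paper exhibits the eigenvectors $z^{[n]}$ of the dual operator $C_t'$ (via the combinatorial identity \eqref{eq.Comb}) to show $\Lambda\subseteq\sigma_{pt}(C_t';(\omega)'_\beta)$ and then applies \cite[Proposition 1.26]{B-M}; your passage to the two-dimensional quotient $\omega/\{x_0=x_1=0\}$, where the induced operator has the two distinct eigenvalues $1$ and $\tfrac12$ and hence cannot be supercyclic, is a neat and fully elementary substitute (supercyclicity does pass to quotients by closed invariant subspaces), though it does not produce the dual eigenvector information that the paper reuses later in Proposition \ref{PP-dualOperator}. All the steps you use check out: the splitting is topological because the projection $x\mapsto(x_0/\alpha_0)x^{[0]}$ is continuous, $I-A$ is a continuous bijection of the Fr\'echet space $\{x_0=0\}$ onto itself, and equicontinuity of $\{C_t^k\}$ does upgrade $\frac1nA^{n+1}V\to0$ to convergence in $\cL_b$.
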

	
	\begin{proof} (i) That $C_t$ is power bounded follows from the barrelledness of $\omega$ and
		$r_n(C_tx)\leq r_n(x)$, for $x\in\omega$ and $n\in\N_0$ (cf. \eqref{eq.ContS-omega}),
		which implies, for every $x\in\omega$, that
\[
	r_n(C^m_tx)\leq r_n(x), \quad  m, n\in\N_0.
\]		
Since $\omega$ is Montel, $C_t$ is uniformly mean ergodic, \cite[Proposition 2.8]{ABR0}. 

(ii) By part (i) and \cite[Theorem 3.5]{ABR00} we can conclude that $(I-C_t)(\omega)$ is closed in $\omega$ and that
\begin{equation}\label{eq.delta2} 
	\omega=\Ker (I-C_t)\oplus (I-C_t)(\omega).
	\end{equation}
Moreover, Lemma \ref{L-3-3}(i) yields that  $\Ker(I-C_t)={\rm span}\{x^{[0]}\}$. Since $(C_tx)_0=x_0$ for each $x\in \omega$ (cf. \eqref{Ces-op}), we have $(I-C_t)(\omega)\su \{x\in\omega:\, x_0=0\}=\ov{{\rm span}\,\{e_r:\, r\in\N\}}$. In order to establish \eqref{eq.delta}, it remains to show that $e_r\in (I-C_t)(\omega)$ for each $r\geq 1$. Observe, via  Lemma \ref{L-3-1}(iii),  that
\begin{equation}\label{eq-id}
	(I-C_t)(e_n-te_{n+1})=(e_n-te_{n+1})-\frac{1}{n+1}e_n=\frac{n}{n+1}e_n-te_{n+1},\quad n\in\N_0.
	\end{equation}
Arguing by induction and using \eqref{eq-id} we can conclude that $e_r\in (I-C_t)(\omega)$ for each $r\geq 1$. Indeed, if $n=0$, then  \eqref{eq-id} yields $(I-C_t)(e_0-te_1)=-te_1$ and hence, $e_1\in (I-C_t)(\omega)$. Suppose that $e_n\in (I-C_t)(\omega)$. Then \eqref{eq-id} implies that $\frac{n}{n+1}e_n-te_{n+1}=	(I-C_t)(e_n-te_{n+1})\in (I-C_t)(\omega)$. Since $e_n\in 	(I-C_t)(\omega)$, by the induction hypothesis, it follows that  $e_{n+1}\in (I-C_t)(\omega)$. This completes the proof of \eqref{eq.delta}.

(iii) To verify that $C_t\in \cL(\omega)$
is not supercyclic we proceed as follows. It follows from \eqref{eq.delta2},   by a duality argument,  that $(\omega)'_\beta=
\Ker (I-C'_t)\oplus (I-C'_t)((\omega)'_\beta)$ and that ${\rm dim}\, \Ker (I-C'_t)={\rm codim}\, (I-C_t)(\omega)=1$, where $C'_t\in \cL((\omega)'_\beta)$ is the dual operator of $C_t$. Accordingly, $1\in \sigma_{pt}(C_t';(\omega)'_\beta)$. On the other hand, a direct calculation shows that the dual operator $C'_t\in\cL((\omega)'_\beta)$ is given by the transpose matrix of \eqref{Matrix}, that is,
\begin{equation}\label{DualC}
	C'_tz=\left(\sum_{k=i}^\infty \frac{t^{k-i}}{k+1}z_k\right)_{i\in\N_0},\quad z=(z_k)_{k\in\N_0}\in (\omega)'_\beta.
\end{equation}
Recall that $(\omega)'_\beta$ consists of vectors $z=(z_n)_{n\in\N_0}\in\C^{\N_0}$ with only finitely many  non-zero coordinates.
Define 
$$
z^{[n]}:=\sum_{i=0}^n(-1)^i\binom{n}{i}t^ie_{n-i}\in (\omega)'_\beta\setminus\{0\}, \quad n\in\N_0.
$$
It is shown below  that
\begin{equation}\label{eq.Auto-D}
 C'_tz^{[n]}=\frac{1}{n+1}z^{[n]}, \quad n\in\N_0.
 \end{equation}
This reveals that $\Lambda=\{\frac{1}{n+1}:\, n\in\N_0\}\su\sigma_{pt}(C_t'; (\omega)_\beta')$.  Since $\sigma(C_t; \omega)=\sigma_{pt}(C_t;\omega)=\Lambda$ (cf. Theorem \ref{Sp-omega}), it follows via \eqref{eq.DD-spettro} in  Corollary \ref{SpettroDuale} that also $\sigma_{pt}(C_t'; (\omega)'_\beta)\subseteq\sigma(C_t'; (\omega)'_\beta)=\Lambda$. So,  
$$
\sigma_{pt}(C_t'; (\omega)'_\beta)=\sigma(C_t'; (\omega)'_\beta)=\Lambda.
$$
 In particular, $C'_t$ has a plenty of eigenvalues which implies that $C_t$ cannot be supercyclic,  \cite[Proposition 1.26]{B-M}.
 
 It remains to establish \eqref{eq.Auto-D}. Note, for $n\in\N_0$ fixed, that $(z^{[n]})_i=0$ if $i>n$ and $(z^{[n]})_{n-i}=(-1)^i\binom{n}{i}t^i$ for $i=0,\ldots,n$. In particular, $z^{[n]}\in (\omega)'_\beta\setminus\{0\}$. For $i>n$ it is clear that 
 \[
 (C_t'z^{[n]})_i=\sum_{k=i}^\infty\frac{t^{k-i}}{k+1}(z^{[n]})_k=0=\frac{1}{n+1}\cdot 0=\frac{1}{n+1}(z^{[n]})_i.
 \] 
 To verify that $(C'_tz^{[n]})_{n-i}=\frac{1}{n+1}(z^{[n]})_{n-i}$ for $i=0,\ldots, n$ observe that 
 \begin{align*}
 	(C'_tz^{[n]})_{n-i}&=\sum_{k=n-i}^\infty \frac{t^{k-(n-i)}}{k+1}(z^{[n]})_{k}=\sum_{k=n-i}^n\frac{t^{k-(n-i)}}{k+1}(z^{[n]})_{k}\\
 	&=\sum_{k=n-i}^n\frac{t^{k-(n-i)}}{k+1}(z^{[n]})_{n-(n-k)}=\sum_{k=n-i}^n\frac{t^{k-(n-i)}}{k+1}(-1)^{n-k}\binom{n}{n-k}t^{n-k}\\
 	&=\sum_{k=n-i}^n\frac{t^{i}}{k+1}(-1)^{n-k}\frac{n!}{(n-k)!\,k!}\cdot\frac{n+1}{n+1}\\
 	&=\frac{t^i(-1)^i}{n+1}\sum_{k=n-i}^n(-1)^{(n-i)-k}\binom{n+1}{k+1}=\frac{(-1)^i}{n+1}t^i\binom{n}{i},
 \end{align*}
where the last equality follows from \eqref{eq.Comb}. But, as noted above, $(-1)^i\binom{n}{i}t^i=(z^{[n]})_{n-i}$ and so $(C_t'z^{[n]})_{n-i}=\frac{1}{n+1}(z^{[n]})_{n-i}$ for $i=0,\ldots, n$. The identity \eqref{eq.Auto-D} is  thereby established and the proof is complete.
	\end{proof}
	
We now turn to the dynamics  of generalized Cesàro operators $C_t$ acting in the other sequence spaces	considered in this paper, for which  we first need to establish some general results on bounded linear operators acting in  lcHs'.  Recall that a linear operator $T\colon X\to Y$, with $X, Y $ lcHs', is said to be \textit{bounded} if there exists a neighbourhood $\cU$ of $0\in X$ such that $T(\cU)$ is a bounded subset of $Y$. It is routine to verify that necessarily $T\in \cL(X,Y)$. A lcHs $X$ is called \textit{locally complete} if, for each closed,  absolutely convex subset $B\in \cB(X)$, the space $X_B:={\rm span}\, (B)$ equipped with the Minkowski functional $\|\cdot\|_B$, \cite[p.47]{MV}, is  a Banach space, whose closed unit ball is $B$. Such a set $B$ is also called a Banach disc, \cite[\S 8.3]{J}.

\begin{theorem}\label{BoundedOp} Let $X$ be a locally complete lcHs and  $T\in \cL(X)$ be a bounded operator satisfying  $\sigma(T;X)\su \ov{B(0,\delta)}$ for some $\delta\in (0,1)$. Then $T^n\to 0$ in $\cL_b(X)$ as $n\to\infty$. In particular, $T$ is both power bounded and uniformly mean ergodic.
\end{theorem}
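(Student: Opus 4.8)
The plan is to exploit the fact that a bounded operator factorizes through a Banach space attached to a bounded set, to reduce the assertion to the classical spectral radius formula on that Banach space, and then to transport the resulting estimate back to $\cL_b(X)$.

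\emph{Factorization.} Since $T$ is bounded, I would choose an absolutely convex $0$-neighbourhood $\cU$ in $X$ with $T(\cU)$ bounded and let $B$ be the closed absolutely convex hull of $T(\cU)$; this is a closed, absolutely convex, bounded set. By local completeness, $X_B:={\rm span}(B)$, normed by the Minkowski functional $\|\cdot\|_B$, is a Banach space with closed unit ball $B$, and the inclusion $j\colon X_B\to X$ is continuous. As $\cU$ is absorbing, $T(X)\subseteq X_B$; writing $p_0$ for the gauge of $\cU$, one has $\|Tx\|_B\le p_0(x)$ for all $x\in X$, so $R\colon X\to X_B$, $Rx:=Tx$, lies in $\cL(X,X_B)$ and $T=j\circ R$. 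Setting $\wt T:=R\circ j\in\cL(X_B)$, an easy induction gives $T^n=j\,\wt T^{\,n-1}R$ for every $n\ge 1$.

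\emph{Spectral inclusion (the key step).} I would show $\sigma(\wt T;X_B)\setminus\{0\}\subseteq\sigma(T;X)$. Fix $\lambda\ne 0$ with $S_\lambda:=(\lambda I-T)^{-1}\in\cL(X)$; using only $jR=T$ and $(\lambda I-T)S_\lambda=S_\lambda(\lambda I-T)=I$ one checks directly that $\frac{1}{\lambda}\bigl(I_{X_B}+R\,S_\lambda\,j\bigr)$ is a two-sided inverse of $\lambda I_{X_B}-\wt T$ in $\cL(X_B)$, every composite involved being continuous; hence $\lambda\in\rho(\wt T;X_B)$. Since also $0\in\ov{B(0,\delta)}$, the hypothesis then yields $\sigma(\wt T;X_B)\subseteq\ov{B(0,\delta)}$, so the spectral radius of the Banach-space operator $\wt T$ satisfies $r(\wt T)\le\delta<1$. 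By the spectral radius formula, $\|\wt T^{\,n}\|_{X_B\to X_B}^{1/n}\to r(\wt T)$, whence $M:=\sup_{n}\|\wt T^{\,n}\|_{X_B\to X_B}<\infty$ and $\|\wt T^{\,n}\|_{X_B\to X_B}\to 0$ as $n\to\infty$.

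\emph{Transfer.} For a continuous seminorm $q$ on $X$ set $M_q:=\sup_{y\in B}q(y)<\infty$ (finite since $B$ is bounded in $X$). From $T^n=j\,\wt T^{\,n-1}R$ and $\|Rx\|_B\le p_0(x)$ one gets $q(T^nx)\le M_q\,\|\wt T^{\,n-1}\|_{X_B\to X_B}\,p_0(x)$ for all $x\in X$ and $n\ge 1$. Bounding the norm by $M$ gives $q(T^nx)\le MM_q\,p_0(x)$ for all $n$, i.e. $\{T^n:n\in\N\}$ is equicontinuous, so $T$ is power bounded; taking the supremum over $x$ in a bounded set $D\subseteq X$ gives $\sup_{x\in D}q(T^nx)\le M_q\,(\sup_D p_0)\,\|\wt T^{\,n-1}\|_{X_B\to X_B}\to 0$, so $T^n\to 0$ in $\cL_b(X)$; and then $\sup_{x\in D}q(T_{[n]}x)\le\frac{1}{n}\sum_{m=1}^n\sup_{x\in D}q(T^mx)\to 0$, being the Ces\`aro average of a null sequence, so $T_{[n]}\to 0$ in $\cL_b(X)$ and $T$ is uniformly mean ergodic. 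The only genuinely delicate point is the spectral inclusion $\sigma(\wt T;X_B)\setminus\{0\}\subseteq\sigma(T;X)$; everything else is routine bookkeeping with the factorization $T=jR$ and the boundedness of $B$ in $X$.
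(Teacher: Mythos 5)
Your proof is correct and follows essentially the same route as the paper: factor $T=jR$ through the local Banach space $X_B$, deduce $r(Rj)\le\delta<1$ from the inclusion of $\sigma(Rj;X_B)\setminus\{0\}$ in $\sigma(T;X)$, apply the spectral radius formula, and transfer the norm decay of $(Rj)^n$ back to $\cL_b(X)$. The only difference is that you verify the spectral inclusion directly via the explicit resolvent $\frac{1}{\lambda}(I_{X_B}+RS_\lambda j)$, where the paper cites Grothendieck for the identity $\sigma(jS;X)\setminus\{0\}=\sigma(Sj;X_B)\setminus\{0\}$; your computation is a correct self-contained substitute.
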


\begin{proof}
	Since $T$ is a bounded operator, there exists a closed, absolutely convex  neighbourhood $\cU$ of $0\in X$ such that $T(\cU)\in \cB(X)$. So, we can select a closed,  absolutely convex subset $B\in \cB(X)$ such that $T(\cU)\su B$. By the assumptions, $(X_B, \|\cdot\|_B)$ is a Banach space. Since $T(\cU)\su B$, the map $S\colon X\to X_B$ defined by $Sx:=Tx$ for $x\in X$, is well defined and it is clearly continuous. Let  $j\colon X_B\to X$ denote the canonical inclusion of $X_B$ into $X$, i.e., $j(x):=x$ for $x\in X_B$. Then $j\in \cL(X_B, X)$ and $T=jS\in \cL(X)$. On the other hand  $Sj\in \cL(X_B)$. So, by \cite[Proposition 5, p.199]{Gr} we have that
	\[
	\sigma(jS;X)\setminus\{0\}=\sigma(Sj;X_B)\setminus\{0\}.
	\]
	Accordingly, $\sigma(Sj;X_B)=\sigma(T;X)\su \ov{B(0,\delta)}$. This implies that the spectral radius $r(Sj)$ of $Sj$ satisfies $r(Sj)\leq \delta<1$. Since  $r(Sj)=\lim_{n\to\infty}\left(\|(Sj)^n\|_{X_B\to X_B}\right)^{1/n}$, it follows via standard arguments that $(Sj)^n\to 0$ in $\cL_b(X_B)$ as $n\to\infty$. The claim is that this implies $T^n\to 0$ in $\cL_b(X)$ as $n\to\infty$. To establish the claim,  fix any $C\in \cB(X)$ and any absolutely convex neighbourhood $\cV$ of $0\in X$. Then there exist $\lambda >0$ such that $C\su \lambda \cU$ and $\mu>0$ such that $B\su \mu \cV$. Since $B$ is the unit closed ball of $X_B$ and $(Sj)^n\to 0$ in $\cL_b(X_B)$, there exists $n_0\in\N$ such that $(Sj)^n(B)\su \frac{1}{\lambda\mu}B$ for all $n\geq n_0$. So, for each $n> n_0$, it follows that
	\begin{align*}
	T^n(C)&\su \lambda T^n(\cU)= \lambda T^{n-1}T(\cU)\subseteq \lambda T^{n-1}(B)=\lambda T^{n-1}(j(B))=\lambda (jS)^{n-1}(j(B))\\
	& =\lambda j(Sj)^{n-2}S(j(B))=\lambda j[(Sj)^{n-1}(B)]\su \lambda j\left(\left(\frac{1}{\lambda\mu}\right)B\right)=\left(\frac{1}{\mu}\right)j(B)\\
	&=\left(\frac{1}{\mu}\right)B\su \cV.
	\end{align*}
This means, with  $W(C,\cV):=\{R\in \cL(X):\, R(C)\su \cV\}$, that $T^n\in W(C,\cV)$ for each $n>n_0$. Since $C\in \cB(X)$ and $\cV$ are arbitrary and the sets $W(C,\cV)$ form a basis of neighbourhoods for $0$ in $\cL_b(X)$, the claim is proved, i.e., $T^n\to 0$ in $\cL_b(X)$ as $n\to\infty$. It follows that $T$ is power bounded (clearly) and that $T_{[n]}\to 0$ in $\cL_b(X)$ as $n\to\infty$ (i.e., $T$ is uniformly mean ergodic). Indeed, let $q$ be any $\tau_b$-continuous seminorm. Then \eqref{average} implies that $q(T_{[n]})\leq \frac{1}{n}\sum_{m=1}^nq(T^m)$ for $n\in\N$. Since $q(T^n)\to 0$ in $[0,\infty)$, also its arithmetic means $\frac{1}{n}\sum_{m=1}^nq(T^m)\to 0$ for $n\to\infty$, that is, $\lim_{n\to\infty}q(T_{[n]})=0$. So, we can conclude that $T_{[n]}\to 0$ in $\cL_b(X)$ for $n\to\infty$.
\end{proof}

Theorem \ref{BoundedOp} permits us to formulate and prove the following general criterion for  power boundedness and uniform mean ergodicity. To state it,  recall that a lcHs $X$ is said to be \textit{ultrabornological} if it  is an inductive limit of Banach spaces, \cite[\S 13.1]{J}, \cite[p.283]{MV}. For instance, Fr\'echet spaces, \cite[Corollary 13.1.4]{J}, and  (LB)-spaces are ultrabornological. A lcHs $X$ is called \textit{a webbed space} if  a \textit{web} can be defined on $X$ . For the definition of a web  and  the properties of webbed spaces we refer to \cite[\S 5.2]{J} and \cite[Ch. 2.4]{24}. Recall from Section 2 that  Fr\'echet spaces and (LB)-spaces are webbed spaces.  Moreover, sequentially closed subspaces and quotients of webbed spaces are webbed spaces, \cite[Theorem 5.3.1]{J}.

For what follows we require the next result concerning algebraic sums in ultrabornological lcHs' which  can be found in \cite[\S 35.5(4), p.66]{24}.

\begin{prop}\label{Decomp} Let $X$ be an ultrabornological lcHs such  that $X=X_1\oplus X_2$ algebraically with both $X_1, X_2\su X$ webbed spaces for the topology induced by $X$. Then $X_1$ and $X_2$ are closed subspaces of $X$ and $X=X_1\oplus X_2$ topologically, i.e., the canonical projections $P_i\colon X\to X_i$ are continuous for $i=1,2$.
\end{prop}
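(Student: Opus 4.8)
The plan is to realize $X$ as the image of the product $X_1\times X_2$ under the addition map and then apply De Wilde's closed graph theorem in the form ``ultrabornological domain, webbed codomain''. First I would equip $X_1\times X_2$ with the product topology, each $X_i$ carrying the topology induced by $X$; since a finite (indeed countable) product of webbed spaces is again a webbed space (see \cite[\S 5.2]{J}, \cite[Ch.\ 2.4]{24}), $X_1\times X_2$ is webbed. Consider the linear map $\sigma\colon X_1\times X_2\to X$ given by $\sigma(x_1,x_2):=x_1+x_2$. Because the inclusions $X_i\hookrightarrow X$ are continuous and addition in $X$ is continuous, $\sigma$ is continuous, and the hypothesis that $X=X_1\oplus X_2$ algebraically says precisely that $\sigma$ is a bijection.

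Next I would show that $\sigma^{-1}\colon X\to X_1\times X_2$ has closed graph. Indeed, the graph of $\sigma$ is closed in $(X_1\times X_2)\times X$ because $\sigma$ is continuous and $X$ is Hausdorff; applying the coordinate flip (a homeomorphism) shows that the graph of $\sigma^{-1}$ is closed in $X\times(X_1\times X_2)$. Since $X$ is ultrabornological and $X_1\times X_2$ is webbed, De Wilde's closed graph theorem (see \cite[\S 5.4]{J}, \cite[Ch.\ 2.4]{24}) yields $\sigma^{-1}\in \cL(X,X_1\times X_2)$. Hence $\sigma$ is a topological isomorphism of $X_1\times X_2$ onto $X$.

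Finally, the canonical projections are $P_i=\pi_i\circ\sigma^{-1}$, where $\pi_i\colon X_1\times X_2\to X_i$ denotes the $i$-th coordinate projection; therefore each $P_i$ is continuous, i.e. $X=X_1\oplus X_2$ topologically. Moreover $X_1=\Ker P_2$ and $X_2=\Ker P_1$ are closed subspaces of $X$, since $P_1,P_2$ are continuous and $X$ is Hausdorff.

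I expect the only genuinely delicate point to be invoking the closed graph theorem in the correct generality: one must make sure that it is the \emph{product} $X_1\times X_2$ that is webbed (not just each factor), and that the ultrabornological hypothesis on $X$ places it on the ``Baire-type'' side of the theorem. It is also worth stressing why one routes the argument through $\sigma$: trying to prove continuity of, say, $P_1\colon X\to X_1$ directly via its graph is circular, because closedness of that graph is equivalent to closedness of $X_2$ in $X$, which is part of what has to be proved.
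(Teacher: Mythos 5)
Your proposal is correct. Note that the paper does not prove this proposition at all: it is quoted from K\"othe \cite[\S 35.5(4), p.~66]{24}, so there is no in-text argument to compare against. What you have written is precisely the standard proof underlying that citation: pass to the addition map $\sigma\colon X_1\times X_2\to X$, which is a continuous linear bijection, observe that $X_1\times X_2$ is webbed (finite, indeed countable, products of webbed spaces are webbed, \cite[\S 5.3]{J}), and invoke De Wilde's closed graph theorem with the ultrabornological space $X$ as domain and the webbed space $X_1\times X_2$ as codomain to get continuity of $\sigma^{-1}$; the projections are then $P_i=\pi_i\circ\sigma^{-1}$ and $X_i=\Ker P_{3-i}$ is closed. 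All the delicate points are handled correctly: the graph of $\sigma^{-1}$ is the flip of the (closed, since $\sigma$ is continuous and $X$ is Hausdorff) graph of $\sigma$, hence sequentially closed, which is all De Wilde's theorem requires; and your remark that one cannot attack the continuity of $P_1$ directly through its own graph--because closedness of that graph already presupposes closedness of $X_2$--is exactly the right diagnosis of why the detour through the product is needed. An equivalent variant would be to cite De Wilde's open mapping theorem for the continuous bijection $\sigma$ from a webbed space onto an ultrabornological space, but this is the same argument in dual form.
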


In general compact operators need not be mean ergodic. Just consider $T=\alpha I$ with $|\alpha|>1$ in a finite dimensional space.

\begin{theorem}\label{OP} Let $X$ be a locally complete, webbed and ultrabornological lcHs. Let $T\in \cL(X)$ be a compact operator such that $1\in\sigma(T;X)$ with $\sigma(T;X)\setminus\{1\}\su \ov{B(0,\delta)}$ for some $\delta\in (0,1)$  and satisfying $\Ker (I-T)\cap (I-T)(X)=\{0\}$. Then $T$ is both power bounded and uniformly mean ergodic.
	\end{theorem}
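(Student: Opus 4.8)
The plan is to peel off the isolated spectral point $1$ via the Riesz--Schauder theory for compact operators in locally convex spaces, use the hypothesis $\Ker(I-T)\cap (I-T)(X)=\{0\}$ to force the associated ``generalized eigenspace'' to collapse onto $\Ker(I-T)$, and then treat the two summands separately: on a finite-dimensional piece $T$ is the identity, and on the complementary piece $T$ has spectrum in a disc of radius $<1$, where Theorem \ref{BoundedOp} applies. The only genuinely substantive analytic input will be Theorem \ref{BoundedOp}; everything else is structural.

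In detail, since $T$ is compact, $\sigma(T;X)$ is compact, and because $\sigma(T;X)\s\{1\}\su\ov{B(0,\delta)}$ with $\delta<1$, the point $1$ is isolated in $\sigma(T;X)$ and $1\ne 0$. By the Riesz--Schauder theory in lcHs' (\cite{Ed}, \cite{Gr}), $1$ is an eigenvalue of finite algebraic multiplicity; writing $p\geq 1$ for the common value of the ascent and descent of $I-T$, one obtains the $T$-invariant decomposition $X=\Ker(I-T)^p\oplus (I-T)^p(X)$ with $\Ker(I-T)^p$ finite-dimensional and closed, $(I-T)^p(X)$ closed, $(I-T)$ nilpotent on the first summand and an isomorphism of the second, and the spectrum splitting as $\{1\}$ on the first and $\sigma(T;X)\s\{1\}$ on the second. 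Next I show $p=1$: if $p\geq 2$ then $\Ker(I-T)\subsetneq\Ker(I-T)^2$, and any $x$ in the difference yields $0\ne (I-T)x\in\Ker(I-T)\cap (I-T)(X)$, contradicting the hypothesis. Hence with $M:=\Ker(I-T)$ (finite-dimensional) and $N:=(I-T)(X)$ (closed) we get $X=M\oplus N$ with $M,N$ both $T$-invariant, $T|_M=I_M$ (as $(I-T)|_M=0$), and $\sigma(T|_N;N)=\sigma(T;X)\s\{1\}\su\ov{B(0,\delta)}$. This algebraic decomposition is automatically topological: $M$ is a Banach space, hence webbed; $N$ is a closed, hence sequentially closed, subspace of the webbed space $X$, hence webbed; and $X$ is ultrabornological; so Proposition \ref{Decomp} gives continuous projections $P\colon X\to M$ and $I-P\colon X\to N$.

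I then apply Theorem \ref{BoundedOp} to $T|_N\in\cL(N)$. The subspace $N$ is locally complete (a closed, bounded, absolutely convex subset of $N$ is one in $X$, and the induced normed spaces coincide), the operator $T|_N$ is bounded (the compact, hence bounded, operator $T$ sends some $0$-neighbourhood $\cU$ of $X$ into a set in $\cB(X)$, so $T(\cU\cap N)\su T(\cU)$ is a bounded subset of $N$ by $T$-invariance), and $\sigma(T|_N;N)\su\ov{B(0,\delta)}$ with $\delta\in(0,1)$. Theorem \ref{BoundedOp} yields $(T|_N)^n\to 0$ in $\cL_b(N)$, and in particular that $T|_N$ is power bounded and uniformly mean ergodic on $N$. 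Finally I reassemble: since $M,N$ are $T$-invariant and $P,I-P$ are continuous, $T^n=P+(T|_N)^n(I-P)$ and $T_{[n]}=P+(T|_N)_{[n]}(I-P)$ for all $n$, so equicontinuity of $\{T^n:n\in\N\}$ follows from that of $\{(T|_N)^n:n\in\N\}$ together with continuity of $P$ and $I-P$, giving power boundedness of $T$; likewise $T_{[n]}\to P$ in $\cL_b(X)$, so $T$ is uniformly mean ergodic, with limit the Riesz projection $P$ onto $\Ker(I-T)$.

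The \emph{main obstacle} is the first step: having the full Riesz--Schauder apparatus---finite-dimensionality of $\Ker(I-T)^p$, closedness of $(I-T)^p(X)$, the ascent/descent equality, and the clean splitting of the spectrum---available in a bare locally convex setting under only local completeness. This is classical for compact operators in lcHs' but must be quoted with care; once it is in hand, the reduction to Theorem \ref{BoundedOp} and the final gluing are routine.
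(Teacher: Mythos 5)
Your proof is correct and follows essentially the same route as the paper: decompose $X=\Ker(I-T)\oplus(I-T)(X)$, upgrade it to a topological decomposition via Proposition \ref{Decomp}, apply Theorem \ref{BoundedOp} to the restriction of $T$ to the range, and reassemble through the continuous Riesz projection. The only (minor) divergence is in the bookkeeping: you obtain the algebraic splitting and the spectral estimate for $T|_{(I-T)(X)}$ from the full ascent/descent Riesz--Schauder decomposition (showing $p=1$ from the trivial-intersection hypothesis), whereas the paper gets the splitting directly from ${\rm codim}\,(I-T)(X)={\rm dim}\,\Ker(I-T)$ and controls the spectrum of the restriction by the more elementary observation that, for a compact operator, $\sigma(S;Y)\setminus\{0\}=\sigma_{pt}(S;Y)\subseteq\sigma_{pt}(T;X)$ with $1$ excluded by the same hypothesis.
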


\begin{proof} Since $T\in \cL(X)$ is a compact operator, the following properties hold true:
		(a) $(I-T)(X)$ is closed in $X$, (b) ${\rm dim}\Ker (I-T)<\infty$ (1 is necessarily an eigenvalue of $T$ as it is an isolated point of $\sigma(T;X)$ and $T$ is  compact), and (c) ${\rm codim}\, (I-T)(X)={\rm dim}\, \Ker (I-T)<\infty$, see, e.g., \cite[Theorem 9.10.1]{Ed}. Since  $\Ker (I-T)\cap (I-T)(X)=\{0\}$ by assumption, it follows that $X=\Ker (I-T)\oplus (I-T)(X)$ algebraically. Moreover, $(I-T)(X)$ and $\Ker (I-T)$ are closed complemented subspaces of $X$ and hence, are webbed spaces, \cite[Theorem 5.3.1]{J}. So, we can apply Proposition \ref{Decomp} to conclude that 	$X=\Ker (I-T)\oplus (I-T)(X)$ holds topologically.
		
		Set $Y:=(I-T)(X)$ and $S:=T|_Y$. It is routine to  verify that $S(Y)\su Y$ and $S\colon Y\to Y$ is a compact operator. So, $\sigma(S;Y)\setminus\{0\} =\sigma_{pt}(S;Y)\su \sigma_{pt}(T;X)\su \sigma(T;X)$. But, $1\not \in\sigma(S;Y)$.  Otherwise, there exists $y\in Y\setminus\{0\}$ such that $Sy=y$, i.e., $Ty=y$ or, equivalently, $(I-T)y=0$. Thus, $y\in Y\cap \Ker (I-T)=(I-T)(X)\cap \Ker (I-T)=\{0\}$ and hence, $y=0$; a contradiction. Hence, $\sigma(S;Y) \su \sigma(T;X)\setminus\{1\}\su \ov{B(0,\delta)}$  with $\delta\in (0,1)$. Since $S$ is compact, it is also bounded and hence, we can  apply Theorem \ref{BoundedOp} to conclude that $S^n\to 0$ in $\cL_b(Y)$ as $n\to\infty$, after noting that the closed subspace $Y$ of $X$ is locally complete.
		
		Denote by $P\colon X\to X$ the continuous projection onto $\Ker (I-T)$ along $(I-T)(X)=Y$, i.e., for each $z\in X$ there exist unique elements $x\in \Ker (I-T)$ and $y\in Y$ such that $z=x+y$ and so $Pz:=x$. The claim is that $T^n\to P$ in $\cL_b(X)$ as $n\to\infty$. To establish this  fix $B\in \cB(X)$ and a neighbourhood $\cU$ of $0\in X$. As $(I-P)\in \cL(X)$, we have  that $(I-P)(B)\in \cB(Y)$.  Taking into account that $S^n\to 0$ in $\cL_b(Y)$ as $n\to\infty$, there exists $n_0\in\N$ such that $S^n((I-P)(B))\su \cU\cap Y$ for every $n\geq n_0$. On the other hand, for each $z\in X$ we have that $Pz\in \Ker (I-T)$, i.e., $TPz=Pz$, and hence, $T^n(Pz)=Pz$ for each $n\in\N$. Accordingly, as $S=T$ on $(I-P)(X)=(I-T)(X)=Y$ we get, for each $z\in B$ and $n\geq n_0$, that
		\begin{align*}
		T^nz-Pz&=T^n(Pz+(z-Pz))-Pz=T^n(z-Pz)=T^n((I-P)z)\\
		&=S^n((I-P)z)\in S^n((I-P)(B))\su \cU\cap Y,
		\end{align*}
	where we used the fact that $(I-P)z\in Y$.
Since $z\in B$ is arbitrary, this implies that $T^n-P\in W(B,\cU):=\{R\in \cL(X):\, R(B)\su \cU\}$ for each $n\geq n_0$. So, by the arbitrariness of $B$ and $\cU$, the claim is proved.
\end{proof}

\begin{remark}
	\rm
	(i) Let $X$ be a sequentially complete lcHs and $T\in \cL(X)$. If $\frac{T^n}{n}\to 0$ in $\cL_s(X)$ as $n\to\infty$, then $\sigma(T;X)\su \ov{B(0,1)}$,  \cite[Proposition 5.1 \& Remark 5.3]{A-M}; see also \cite[Proposition 4.4]{FGJ}. In particular, if $T$ is power bounded, then  $\sigma(T;X)\su \ov{B(0,1)}$. In view of this fact, Theorem \ref{OP} can be seen as a sort of converse result (observe that every sequentially complete lcHs is  locally complete, \cite[Corollary 23.14]{MV}).
	
	(ii) Theorem \ref{BoundedOp} should also be compared with \cite[Theorem 10]{ABR-00} in which it is proved,  for $T\in \cL(X)$ with $X$ a prequojection Fr\'echet space, that $T^n\to 0$ in $\cL_b(X)$ as $n\to\infty$ if, and only if, $\sigma(T;X)\su B(0,1)$ and $\frac{T^n}{n}\to 0$ in $\cL_b(X)$. Since $\sigma(C_t;\omega)\not\subseteq B(0,1)$ (as $1\in \sigma(C_t;\omega)$ but  $1\not\in {B(0,1)}$) and $\omega$ is a prequojection Fr\'echet space, for each $t\in [0,1)$, it follows that  $(C_t)^n\not\to 0$ in $\cL_b(\omega)$ for $n\to\infty$.
\end{remark}

Combining Theorem \ref{OP} with the results in the preceding sections we get the following result.

\begin{theorem}\label{Ct-PM} Let $t\in [0,1)$. Let $X$  belong to any one of the sets: $\{d_p,\ell^p:\, 1\leq p<\infty\}\cup\{ces(p):\, 1<p<\infty\}$ or $\{\ell(p+), ces(p+), d(p+):\, 1\leq p<\infty\}$ or $\{\ell(p-), ces(p-), d(p-):\, 1< p\leq \infty\}$. Then $C_t\in \cL(X)$ is power bounded and uniformly mean ergodic, but not supercyclic.
	\end{theorem}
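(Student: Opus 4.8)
The plan is to derive the power boundedness and uniform mean ergodicity from the abstract criterion of Theorem \ref{OP}, and the failure of supercyclicity from the abundance of eigenvalues of the dual operator $C_t'$, exactly as was done for $\omega$ in Theorem \ref{Dyn-omega}.

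First I would verify that each of the listed spaces $X$ satisfies the structural hypotheses of Theorem \ref{OP}. Every Banach, Fr\'echet and (LB)-space is webbed and ultrabornological (for Fr\'echet spaces this is \cite[Corollary 13.1.4]{J}); moreover Banach and Fr\'echet spaces are complete, and each of $\ell(p-)$, $ces(p-)$, $d(p-)$ is complete by Lemma \ref{Reg-LB}, since its defining Banach steps are reflexive. Hence $X$ is locally complete in all cases. Next, $C_t\in\cL(X)$ is compact: for the Banach spaces this is Propositions \ref{Sp-spazilp}, \ref{Sp-spazices} and \ref{Sp-spazidp} (with Remark \ref{Nuova R} covering $d_1$), for the Fr\'echet spaces it is Theorem \ref{Sp-F}(i), and for the (LB)-spaces it is Theorem \ref{Spectrum-LB}(i). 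The same references, together with \eqref{spectra-X} and \eqref{spectra-LB}, give $\sigma(C_t;X)=\Lambda\cup\{0\}$ in every case; since $1=\frac{1}{0+1}\in\Lambda$ while $(\Lambda\setminus\{1\})\cup\{0\}\subseteq\overline{B(0,\frac{1}{2})}$, the hypotheses ``$1\in\sigma(C_t;X)$'' and ``$\sigma(C_t;X)\setminus\{1\}\subseteq\overline{B(0,\delta)}$'' of Theorem \ref{OP} hold with $\delta=\frac{1}{2}$.

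It remains to check the decomposition hypothesis $\Ker(I-C_t)\cap(I-C_t)(X)=\{0\}$, which I expect to be the only point requiring a genuine argument. By Remark \ref{Point-Sp} (Banach case) and Theorems \ref{Sp-F}(iii) and \ref{Spectrum-LB}(iii) (Fr\'echet and (LB) cases), the eigenvalue $1$ is simple and $\Ker(I-C_t)={\rm span}(x^{[0]})$ with $x^{[0]}=\alpha_0(t^n)_{n\in\N_0}$, whose $0$-th coordinate $\alpha_0$ is non-zero. On the other hand \eqref{Ces-op} gives $(C_tx)_0=x_0$ for every $x\in X\subseteq\omega$, so every element of $(I-C_t)(X)$ has vanishing $0$-th coordinate; hence $x^{[0]}\notin(I-C_t)(X)$, and since $\Ker(I-C_t)$ is one-dimensional this forces $\Ker(I-C_t)\cap(I-C_t)(X)=\{0\}$. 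Theorem \ref{OP} then yields that $C_t\in\cL(X)$ is power bounded and uniformly mean ergodic.

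Finally, for the failure of supercyclicity I would pass to the dual operator $C_t'\in\cL(X'_\beta)$ and argue as in Theorem \ref{Dyn-omega}(iii). Since each coordinate functional is continuous on $X\subseteq\omega$ and $\cE$ spans a dense subspace of $X$, every finitely supported sequence defines an element of $X'$, and $C_t'$ maps the finitely supported sequences into themselves, acting by the transpose of the matrix \eqref{Matrix}, i.e., by formula \eqref{DualC}. Consequently the vectors $z^{[n]}=\sum_{i=0}^n(-1)^i\binom{n}{i}t^ie_{n-i}$ of Theorem \ref{Dyn-omega}(iii) belong to $X'_\beta\setminus\{0\}$ and satisfy $C_t'z^{[n]}=\frac{1}{n+1}z^{[n]}$, so that $\Lambda\subseteq\sigma_{pt}(C_t';X'_\beta)$. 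In particular $C_t'$ has infinitely many eigenvalues, whence $C_t$ cannot be supercyclic, by \cite[Proposition 1.26]{B-M}. The main obstacle in the whole argument is the verification of $\Ker(I-C_t)\cap(I-C_t)(X)=\{0\}$; once the zeroth-coordinate observation is in place, everything else amounts to assembling the compactness and spectral facts already available and invoking Theorems \ref{OP} and \ref{Dyn-omega}.
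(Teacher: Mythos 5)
Your proposal is correct and follows the same overall route as the paper: verify the hypotheses of Theorem \ref{OP} to obtain power boundedness and uniform mean ergodicity, then rule out supercyclicity via the abundance of eigenvalues of $C_t'$ and \cite[Proposition 1.26]{B-M}. Two local differences are worth noting, both in the direction of economy: for $\Ker(I-C_t)\cap(I-C_t)(X)=\{0\}$ you use only the trivial inclusion $(I-C_t)(X)\subseteq\{x\in X:\,x_0=0\}$ together with $\dim\Ker(I-C_t)=1$ and $(x^{[0]})_0\neq 0$, whereas the paper establishes the full identity $(I-C_t)(X)=\{x\in X:\,x_0=0\}$ by adapting the inductive argument of Theorem \ref{Dyn-omega}(ii); and for non-supercyclicity you exhibit the explicit finitely supported eigenvectors $z^{[n]}$ of $C_t'$ (exactly as the paper does later in Proposition \ref{PP-dualOperator}), whereas the paper's proof of Theorem \ref{Ct-PM} instead quotes the fact that a compact operator and its dual have the same non-zero eigenvalues, \cite[Theorem 9.10.2(2)]{Ed}. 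Both variants are valid, and the remaining verifications (local completeness, webbedness, ultrabornologicity, compactness of $C_t$, and $\sigma(C_t;X)=\Lambda\cup\{0\}$) are assembled from the same prior results the paper uses.
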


\begin{proof} From the results of the preceding sections  recall that $C_t\in \cL(X)$ is a compact operator on $X$ and $\sigma(C_t;X)=\Lambda\cup\{0\}$.
	Hence, $\sigma(C_t;X)\setminus\{1\}\su \ov{B(0,1/2)}$. Moreover, $(I-C_t)(X)$ is also  closed in $X$. Since $x^{[0]}\in d_1\subseteq X$, we can adapt 
	 the arguments in the proof of Theorem \ref{Dyn-omega} to argue that $(I-C_t)(X)=\{x\in X:\, x_0=0\}=\ov{{\rm span}\{e_r:\, r\in\N\}}$ and $\Ker (I-C_t)={\rm span}\, \{x^{[0]}\}$. Hence, $\Ker (I-C_t)\cap (I-C_t)(X)=\{0\}$. So, all the assumptions of Theorem \ref{OP} (for $\delta=\frac{1}{2}$ and $T:=C_t$) are satisfied. Then we can conclude that  $C_t$ is power bounded and uniformly mean ergodic.
	
To show that $C_t\colon X\to X$ is not supercyclic we proceed as follows. Since $C_t\in \cL(X)$ is compact, the operators $C_t\colon X\to X$  and $C'_t\colon X'_\beta\to X'_\beta$ have the same non-zero eigenvalues, \cite[Theorem 9.10.2(2)]{Ed}. Hence, $\sigma_{pt}(C'_t;X'_\beta)=\sigma_{pt}(C_t;X)=\Lambda$. According to \cite[Proposition 1.26]{B-M} it follows that the operator $C_t\colon X\to X$ cannot be supercyclic.
\end{proof}


A first consequence of the results collected above is the following one concerning the dual operators $C'_t$. First we recall the relevant dual spaces involved. Namely, for $p, p'$ satisfying $\frac{1}{p}+\frac{1}{p'}=1$ we have (see Proposition 3.4(i), Proposition 4.3 and Remark 4.4 in \cite{BR2}, respectively): 

$\ell(p-)\simeq (\ell(p'+))'_\beta$ and $(\ell(p-))'_\beta\simeq \ell(p'+)$, for $1<p\leq\infty$;

$d(p-)\simeq (ces(p'+))'_\beta$ and $(ces(p-))'_\beta\simeq d(p'+)$, for $1< p\leq\infty$; 

$ces(p-)\simeq (d(p'+))'_\beta$ and $ces(p'+)\simeq (d(p-))'_\beta$, for $1<p\leq \infty$.

\begin{prop}\label{PP-dualOperator} Let $t\in [0,1)$ and $X$ belong to any one of the sets: $\{d_p,\ell^p\,:\, 1\leq p<\infty\}\cup\{ces(p)\,:\, 1<p<\infty\}$ or $\{\ell(p+), ces(p+), d(p+)\,:\, 1\leq p<\infty\}$ or $\{\ell(p-), ces(p-), d(p-)\,:\, 1<p\leq\infty\}$.  
	\begin{itemize}
		\item[\rm (i)] The dual operator $C'_t\in \cL(X'_\beta)$ of $C_t\in \cL(X)$ is compact and is given by 
		\begin{equation}\label{eq.DualeO-X}
		C'_ty=\left(\sum_{k=i}^\infty\frac{t^{k-i}}{k+1}y_k\right)_{i\in\N_0},\quad y=(y_k)_{k\in\N_0}\in X'_\beta.
		\end{equation}
		\item[\rm (ii)] The point spectrum of $C'_t\in \cL(X'_\beta)$ is given by
		\begin{equation}\label{eq.XX-s}
		\sigma_{pt}(C'_t;X'_\beta)=\sigma_{pt}(C_t;X)=\Lambda.
		\end{equation}
		\end{itemize}
		Each eigenvalue $\frac{1}{n+1}$, for $n\in\N_0$, is simple and its corresponding eigenspace is spanned by
		\[
		 y^{[n]}=\sum_{i=0}^n(-1)^i \binom{n}{i}t^ie_{n-i}\in X'_\beta\setminus\{0\}, \ n\in\N_0.
		\]
		Moreover,
		\[
		\sigma^*(C_t';X'_\beta)=\sigma(C'_t; X'_\beta)=\Lambda\cup\{0\}.
		\]
	\end{prop}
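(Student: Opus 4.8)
The plan is to deduce both parts from the results of Sections~2--5 together with the eigenvalue computation already carried out for $\omega$ in the proof of Theorem~\ref{Dyn-omega}(iii). For every $X$ in the three families the strong dual $X'_\beta$ is again one of the sequence spaces treated in this paper, paired with $X$ through the coordinatewise bilinear form $\langle y,x\rangle=\sum_n y_nx_n$: this is classical when $X$ is a Banach space, and in the Fr\'echet and (LB) cases it is furnished by the duality identifications recorded just before the statement (see also \cite{BR2}, \cite{BR3}). Transposing the lower-triangular matrix \eqref{Matrix}, exactly as was done for $\omega$ to obtain \eqref{DualC}, produces the formula \eqref{eq.DualeO-X}, the series there being absolutely convergent because $0\le t<1$ and $\sup_k|y_k|/(k+1)<\infty$ for $y$ in each of these dual spaces.

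For the compactness of $C'_t$: when $X$ is a Banach space this is Schauder's theorem. When $X$ is Fr\'echet or an (LB)-space I would instead dualize the factorization $C_t=D_\varphi R_t$ of Propositions~\ref{P-4-2}, \ref{P-4-4}(iii) and \ref{P-5-2}(iii), obtaining $C'_t=R'_tD'_\varphi$ with $R'_t\in\cL(X'_\beta)$; the coordinatewise duality shows $D'_\varphi$ is again the diagonal multiplication operator by $\varphi$ on $X'_\beta$. If $X$ is Fr\'echet then $X'_\beta$ is one of the (LB)-spaces $\ell(q-),ces(q-),d(q-)$, on which $D'_\varphi$ is compact by Proposition~\ref{P-5-1}; if $X$ is an (LB)-space then $X'_\beta$ is one of the Fr\'echet spaces $\ell(q+),ces(q+),d(q+)$, on which $D'_\varphi$ is compact by Proposition~\ref{P-4-3}. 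Either way $C'_t=R'_tD'_\varphi$ is compact by the ideal property of compact operators (Lemma~\ref{Comp-Op}). (Alternatively one may appeal to \cite[Corollary~9.6.3]{Ed} in the Fr\'echet case and to Proposition~\ref{Comp-dual} in the (LB) case.)

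Now to (ii). Since $C_t\in\cL(X)$ and $C'_t\in\cL(X'_\beta)$ are both compact, they have the same non-zero eigenvalues and their corresponding eigenspaces have equal finite dimension, by \cite[Theorem~9.10.2(2)]{Ed}. By the results of Sections~4--5 (Theorems~\ref{Sp-F}, \ref{Spectrum-LB}) together with Propositions~\ref{Sp-spazilp}, \ref{Sp-spazices}, \ref{Sp-spazidp} and Remarks~\ref{Nuova R}, \ref{Point-Sp}, one has $\sigma_{pt}(C_t;X)=\Lambda$ with every eigenvalue simple; as $0\notin\Lambda$ this already yields $\sigma_{pt}(C'_t;X'_\beta)\setminus\{0\}=\Lambda$ with every eigenvalue of $C'_t$ lying in $\Lambda$ simple. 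To exclude $0$ one uses that $C_t(X)$ is dense in $X$: indeed $C_t(e_n-te_{n+1})=\frac{1}{n+1}e_n$ by Lemma~\ref{L-3-1}(iii), so $\cE\subseteq C_t(X)$, which is dense because $\cE$ is a Schauder basis of $X$; hence the annihilator $\Ker C'_t=(C_t(X))^{\perp}=\{0\}$, i.e. $0\notin\sigma_{pt}(C'_t;X'_\beta)$, and \eqref{eq.XX-s} follows. For the eigenvectors, $y^{[n]}=\sum_{i=0}^n(-1)^i\binom{n}{i}t^ie_{n-i}$ is finitely supported, hence lies in $X'_\beta$ (the coordinate functionals are continuous on $X$ since $X\subseteq\omega$ continuously) and is non-zero (its $n$-th coordinate is $1$); and $C'_ty^{[n]}=\frac{1}{n+1}y^{[n]}$ by exactly the computation \eqref{eq.Auto-D} from the proof of Theorem~\ref{Dyn-omega}(iii), which uses the combinatorial identity \eqref{eq.Comb} and the formula \eqref{eq.DualeO-X} and applies verbatim here because $C'_t$ acts through that formula on finitely supported sequences. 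Simplicity of the eigenvalue then forces $\Ker(\frac{1}{m+1}I-C'_t)={\rm span}(y^{[m]})$ for each $m\in\N_0$.

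Finally, every $X$ in the list is complete and barrelled (Banach spaces trivially, Fr\'echet spaces always, and the (LB)-spaces here by Lemma~\ref{Reg-LB}), so Corollary~\ref{SpettroDuale} gives $\sigma(C'_t;X'_\beta)=\sigma(C_t;X)=\Lambda\cup\{0\}$, the last equality coming from Theorems~\ref{Sp-F}(ii), \ref{Spectrum-LB}(ii) and Propositions~\ref{Sp-spazilp}, \ref{Sp-spazices}, \ref{Sp-spazidp}. Combining the general inclusion $\sigma(C'_t;X'_\beta)\subseteq\sigma^*(C'_t;X'_\beta)$ with \eqref{eq.DD-Spettro} of Corollary~\ref{SpettroDuale} and with $\sigma^*(C_t;X)=\Lambda\cup\{0\}$ (Theorems~\ref{Sp-F}(ii), \ref{Spectrum-LB}(ii), trivial in the Banach case) yields $\Lambda\cup\{0\}\subseteq\sigma^*(C'_t;X'_\beta)\subseteq\sigma^*(C_t;X)=\Lambda\cup\{0\}$, whence $\sigma^*(C'_t;X'_\beta)=\sigma(C'_t;X'_\beta)=\Lambda\cup\{0\}$. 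The one step requiring genuine care is the compactness of $C'_t$: one must keep track of the duality identifications of the nine dual spaces and verify that each is compatible with $D_\varphi$, so that $D'_\varphi$ really is multiplication by $\varphi$ on $X'_\beta$; everything else is an assembly of the earlier results plus the computation \eqref{eq.Auto-D}.
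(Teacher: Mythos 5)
Your proposal is correct and its overall architecture (formula for $C'_t$, compactness, point spectrum, eigenvectors, then $\sigma$ and $\sigma^*$ via Corollary \ref{SpettroDuale}) matches the paper's, but two sub-steps are handled by genuinely different arguments. For the compactness of $C'_t$ in the non-normable cases the paper quotes \cite[Corollary 9.6.3]{Ed} when $X$ is Fr\'echet and invokes Proposition \ref{Comp-dual} when $X$ is an (LB)-space, which obliges it to verify that $\ell(p-)$, $ces(p-)$ and $d(p-)$ are boundedly retractive (via quasinormability of $\ell(p'+)$ and the (DFS) property of the other two). Your dualization of the factorization $C_t=D_\varphi R_t$ into $C'_t=R'_tD'_\varphi$ sidesteps that verification and reuses Propositions \ref{P-4-3} and \ref{P-5-1} on the identified dual space; the price is exactly the one you flag, namely checking that under the isomorphisms $(\ell(p+))'_\beta\simeq\ell(p'-)$, $(ces(p+))'_\beta\simeq d(p'-)$, $(d(p+))'_\beta\simeq ces(p'-)$ (and their (LB) counterparts) the operator $D'_\varphi$ really becomes the diagonal operator with symbol $\varphi$ --- true because the pairing is coordinatewise, but it must be said. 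Second, to exclude $0$ from $\sigma_{pt}(C'_t;X'_\beta)$ the paper proves injectivity of $C'_t$ directly from \eqref{eq.DualeO-X} via the identity $\frac{1}{i+1}z_i=(C'_tz)_i-t(C'_tz)_{i+1}$, whereas you use $\Ker C'_t=(C_t(X))^{\perp}$ together with the density of $C_t(X)$ (which contains $\cE$ by Lemma \ref{L-3-1}(iii)); both work. The one place where you are more cavalier than the paper is the derivation of \eqref{eq.DualeO-X} itself: ``transposing the matrix'' requires either density of ${\rm span}\,\cE$ in $X'_\beta$ --- which fails for $X=\ell^1$, where $X'_\beta=\ell^\infty$ --- or a direct verification of $\langle C_tx,y\rangle=\langle x,Ty\rangle$ using absolute convergence of the double series; the paper treats $\ell^1$ separately for precisely this reason, and your appeal to absolute convergence does cover that case, but the case split should be made explicit.
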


\begin{proof} (i) Recall that $\cE$ is an unconditional basis  in $\ell(p+)$ $ces(p+)$, $d(p+)$, for $1\leq p<\infty$ (cf. Section 4) and an unconditional basis in $\ell(p-)$, $ces(p-)$, $d(p-)$, for $1<p\leq\infty$ (cf. Section 5). Moreover, $\cE$ is also an unconditional basis  in the dual Banach spaces $(\ell^p)'=\ell^{p'}$ for $1<p<\infty$, in the dual Banach spaces $(ces(p))'\simeq d_{p'}$ for $1<p<\infty$, \cite{BR1}, and in the dual Banach spaces $(d_p)'\simeq ces(p')$ for $1<p<\infty$ (cf. \cite{Be}, \cite{CR3}), as well as in $(d_1)'\simeq ces(0)$,  \cite[Section 6]{CR2}. In view of the description of $X'_\beta$ (for $X$ non-normable) given prior to this Proposition it follows, for all $X\not=\ell^1$, that the linear space $\mbox{span}(\cE)=(\omega)'$ is dense in $X'_\beta$. The continuity of $C'_t\colon X'_\beta\to X'_\beta$ then implies that \eqref{DualC} can be extended to an inequality for every $y\in X'_\beta$, that is, \eqref{eq.DualeO-X} is valid. 
	
	For $X=\ell^1$, the linear space $\mbox{span}(\cE)=(\omega)'$ is not dense in $X'_\beta=\ell^\infty$. So, in this case we argue as follows.
		Define $Ty:=(\sum_{k=i}^\infty\frac{t^{k-i}}{k+1}y_k)_{i\in\N_0}$ for $y\in\ell^\infty$, in which case $T\in\cL(\ell^\infty)$. Indeed, for $y\in\ell^\infty$, note that 
		\begin{align*}
		\|Ty\|_\infty=&\sup_{i\in\N_0}	\left|\sum_{k=i}^\infty\frac{t^{k-i}}{k+1}y_k\right|\leq \sup_{i\in\N_0}\sum_{k=i}^\infty\frac{t^{k-i}}{k+1}|y_k|\leq \|y\|_\infty \sup_{i\in\N_0}\sum_{k=i}^\infty\frac{t^{k-i}}{k+1}\\
	\leq 	& 		\|y\|_\infty \sup_{i\in\N_0} \sum_{k=i}^\infty t^{k-i}=\|y\|_\infty\sum_{j=0}^\infty t^j=\frac{1}{1-t}\|y\|_\infty  \ \ (\mbox{as } 0\leq t<1).
\end{align*}
Accordingly, $\|T\|_{\ell^\infty\to \ell^\infty}\leq \frac{1}{1-t}$, that is, $T\in \cL(\ell^\infty)$. For each $x\in \ell^1$ and $y\in\ell^\infty$, a direct calculation yields
\[
\langle C_t x, y\rangle=\langle x, Ty\rangle, 
\]
which implies that $T=C'_t$.

For any Fr\'echet space $X\in\{\ell(p+), ces(p+), d(p+)\,:\, 1\leq p<\infty\}$ and any Banach space $X\in \{\ell^1, d_1\}\cup \{\ell^p, ces(p), d_p\, :\, 1<p<\infty\}$ the operator $C_t\in \cL(X)$ is compact (cf. Propositions \ref{Sp-spazilp}, \ref{Sp-spazices}, \ref{Sp-spazidp} and Remark \ref{Nuova R} and Theorem \ref{Sp-F}(i)). Accordingly,  the dual operator $C'_t\in \cL(X'_\beta)$ of $C_t\in \cL(X)$ is compact, \cite[Corollary 9.6.3]{Ed}. 

For any (LB)-space $X\in \{\ell(p-), ces(p-), d(p-)\,:\, 1<p\leq\infty\}$ the operator $C_t\in \cL(X)$ is also compact (cf. Theorem \ref{Spectrum-LB}(i)). So, the compactness of  $C_t'\in\cL(X'_\beta)$ follows from Proposition \ref{Comp-dual}, after observing that $X$ is a boundedly retractive (LB)-space. Indeed,   $X=\ell(p-)$, for $1<p\leq\infty$, is  a boundedly retractive (LB)-space, as it is the strong dual of  the quasinormable Fr\'echet space $\ell(p'+)$, \cite[p.12]{M-M}. On the other hand, $X\in \{ces(p-), d(p-)\,:\, 1<p\leq \infty\}$ is  a boundedly retractive (LB)-space, as it is a (DFS)-space, \cite[Proposition 2.5(ii) \& Lemma 4.2(i)]{BR2}.	

(ii) It was shown in the proof of Theorem  \ref{Dyn-omega} that each vector $z^{[n]}\in (\omega)'_\beta \setminus\{0\}\subseteq X'_\beta$ satisfies $C'_tz^{[n]}=\frac{1}{n+1}z^{[n]}$, for every $n\in\N_0$. Accordingly.
\begin{equation}\label{alfa}
	\Lambda \subseteq \sigma_{pt}(C'_t;X'_\beta).
	\end{equation} 
Moreover, $0\not\in\ \sigma_{pt}(C'_t;X'_\beta)$ as $C'_t$ is injective. To verify this let $z\in X'_\beta$ satisfy $C'_tz=0$. By considering the individual coordinates in \eqref{eq.DualeO-X} it follows that 
\[
\frac{1}{i+1}z_i=(C_t'z)_i-t(C'_tz)_{i+1}, \quad i\in\N_0,
\]
that is, $z=0$ and so indeed $0\not\in\ \sigma_{pt}(C'_t;X'_\beta)$. The compactness of $C'_t\in\cL(X'_\beta)$ then implies that 
\begin{equation}\label{beta}
	\sigma(C'_t; X'_\beta)=\{0\}\cup \sigma_{pt}(C'_t;X'_\beta) \ \mbox{ and }\ 0\not\in\ \sigma_{pt}(C'_t;X'_\beta).
\end{equation} 
It follows from \eqref{eq.DD-spettro} in Corollary \ref{SpettroDuale} (with $T:=C_t$), from \eqref{beta} and from the fact that $\sigma_{pt}(C_t;X)=\Lambda$, that \eqref{eq.XX-s} is valid.

Parts (1) and (2) of \cite[Proposition 9.10.2]{Ed} imply that each eigenvalue of $C'_t$ is simple, as this is the case for $C_t$; see Propositions \ref{Sp-spazilp}, \ref{Sp-spazices}, \ref{Sp-spazidp} and Remark \ref{Nuova R} and Theorems \ref{Sp-F}, \ref{Spectrum-LB}, which also include the identities
\begin{equation}\label{gamma}
\sigma^*(C_t;X)=\sigma(C_t;X)=\Lambda\cup\{0\}.
\end{equation}
Setting $T:=C_t$ it follows from  \eqref{eq.DD-Spettro} in Corollary \ref{SpettroDuale}, together with \eqref{gamma}, that
\[
\sigma^*(C'_t;X'_\beta)\subseteq \sigma^*(C_t;X)=\Lambda\cup\{0\}.
\]
From general theory (cf. Section 2) we also have that
\[
\sigma(C_t';X'_\beta)\subseteq \sigma^*(C'_t;X'_\beta).
\]
	Since \eqref{eq.XX-s} and \eqref{beta} imply that $\sigma(C_t';X'_\beta)=\Lambda\cup\{0\}$, we can conclude that 
	\[
	\Lambda\cup\{0\}=\sigma(C_t';X'_\beta)\subseteq \sigma^*(C_t';X'_\beta)\subseteq \Lambda\cup\{0\}.
	\]
	This, together with \eqref{gamma}, yields $\sigma^*(C_t';X'_\beta)=\sigma^*(C_t;X)= \Lambda\cup\{0\}$.
\end{proof}

A consequence of Theorem \ref{Ct-PM} is the next result.

\begin{prop}\label{P-6-8} Let $t\in [0,1)$. Let $X$ belong to any one of the sets: $\{d_p, \ell^p\,:\, 1\leq p<\infty\}\cup \{ces(p)\,:\, 1<p<\infty\}$ or $\{\ell(p+), ces(p+), d(p+)\,:\, 1\leq p<\infty\}$ or $\{\ell(p-), ces(p-), d(p-)\,:\, 1<p\leq\infty\}$. Then $C_t'\in \cL(X_\beta')$ is power bounded and uniformly mean ergodic, but not supercyclic.
	\end{prop}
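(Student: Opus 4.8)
The plan is to verify the hypotheses of Theorem \ref{OP} for the operator $T:=C_t'$ acting in $X'_\beta$, and then to deduce the failure of supercyclicity exactly as was done for $C_t$ in the proof of Theorem \ref{Ct-PM}. First I would record that in every one of the listed cases the space $X'_\beta$ is a Banach space, a Fr\'echet space or an (LB)-space: if $X$ is one of the Banach spaces this is obvious; if $X\in\{\ell(p+),ces(p+),d(p+)\}$, then $X'_\beta$ is, respectively, the (LB)-space $\ell(p'-)$, $d(p'-)$ or $ces(p'-)$; and if $X\in\{\ell(p-),ces(p-),d(p-)\}$, then $X'_\beta$ is, respectively, the Fr\'echet space $\ell(p'+)$, $d(p'+)$ or $ces(p'+)$ (see the dual identifications recalled just before Proposition \ref{PP-dualOperator}). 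In all of these cases $X'_\beta$ is complete, hence locally complete, and it is both webbed and ultrabornological (cf. Section 6). By Proposition \ref{PP-dualOperator} the operator $C_t'\in\cL(X'_\beta)$ is compact, with $\sigma(C_t';X'_\beta)=\Lambda\cup\{0\}$, $\sigma_{pt}(C_t';X'_\beta)=\Lambda$ and every eigenvalue of $C_t'$ simple; in particular $1\in\Lambda\su\sigma(C_t';X'_\beta)$ and $\sigma(C_t';X'_\beta)\setminus\{1\}=(\Lambda\setminus\{1\})\cup\{0\}\su\ov{B(0,1/2)}$.

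The one hypothesis of Theorem \ref{OP} still to be verified is $\Ker(I-C_t')\cap(I-C_t')(X'_\beta)=\{0\}$, and this is the only delicate point. From the proof of Theorem \ref{Ct-PM} recall that $\Ker(I-C_t)={\rm span}\{x^{[0]}\}$ is one-dimensional, with $x^{[0]}=(t^n)_{n\in\N_0}$, and that $(I-C_t)(X)=\{x\in X:\,x_0=0\}$; writing an arbitrary $x\in X$ as $x=x_0x^{[0]}+(x-x_0x^{[0]})$, where the second summand has vanishing $0$-th coordinate, one sees moreover that $X=\Ker(I-C_t)+(I-C_t)(X)$. Since $C_t'\in\cL(X'_\beta)$ is compact and $1$ is a simple eigenvalue of $C_t'$, the Riesz--Schauder theory (cf. \cite[Theorem 9.10.1]{Ed}) yields that $(I-C_t')(X'_\beta)$ is closed with ${\rm codim}\,(I-C_t')(X'_\beta)={\rm dim}\,\Ker(I-C_t')=1$. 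On the other hand $(I-C_t')(X'_\beta)\su(\Ker(I-C_t))^\perp$, because $\langle(I-C_t')g,x^{[0]}\rangle=\langle g,(I-C_t)x^{[0]}\rangle=0$ for every $g\in X'$ (using $C_tx^{[0]}=x^{[0]}$), and since $(\Ker(I-C_t))^\perp$ also has codimension ${\rm dim}\,\Ker(I-C_t)=1$ in $X'_\beta$, these two closed subspaces must coincide, i.e. $(I-C_t')(X'_\beta)=(\Ker(I-C_t))^\perp$. Combining this with the identity $\Ker(I-C_t')=\Ker((I-C_t)')=((I-C_t)(X))^\perp$ gives
\[
\Ker(I-C_t')\cap(I-C_t')(X'_\beta)=\big((I-C_t)(X)+\Ker(I-C_t)\big)^\perp=X^\perp=\{0\}.
\]
Consequently Theorem \ref{OP} applies with $T:=C_t'$ and $\delta:=1/2$, and therefore $C_t'$ is power bounded and uniformly mean ergodic.

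It remains to check that $C_t'\colon X'_\beta\to X'_\beta$ is not supercyclic, for which I would mimic the corresponding argument from the proof of Theorem \ref{Ct-PM}. Since $C_t'$ is compact, it has the same non-zero eigenvalues as its dual $C_t''\in\cL(X''_\beta)$, \cite[Theorem 9.10.2(2)]{Ed}; as $\sigma_{pt}(C_t';X'_\beta)=\Lambda$ and $0\notin\Lambda$, it follows that $\sigma_{pt}(C_t'';X''_\beta)\supseteq\Lambda$ is infinite, so by \cite[Proposition 1.26]{B-M} the operator $C_t'$ cannot be supercyclic. The step I expect to require genuine care is the identification $(I-C_t')(X'_\beta)=(\Ker(I-C_t))^\perp$, i.e.\ the kernel--range transversality needed to invoke Theorem \ref{OP}; the rest is just a matter of assembling results already established in the paper.
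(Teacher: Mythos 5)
Your proposal is correct, but it takes a genuinely different route from the paper's for the power boundedness and uniform mean ergodicity of $C_t'$. The paper transfers these two properties from $C_t$ to $C_t'$ by pure duality: power boundedness of $C_t'$ follows from $(C_t')^n=(C_t^n)'$ together with the fact that adjoints of an equicontinuous family form an equicontinuous family in $\cL(X'_\beta)$ (K\"othe \S 39.3(6)), and uniform mean ergodicity of $C_t'$ follows from that of $C_t$ via a lemma on duals of uniformly mean ergodic operators on (quasi-)barrelled spaces \cite[Lemma 2.1]{ABR}. You instead re-run the abstract Theorem \ref{OP} directly on $X'_\beta$, which requires checking that $X'_\beta$ is locally complete, webbed and ultrabornological (true in every case, since $X'_\beta$ is a Banach space, a Fr\'echet space, or a complete (LB)-space by the dual identifications recalled before Proposition \ref{PP-dualOperator}), invoking Proposition \ref{PP-dualOperator} for the compactness and spectra of $C_t'$, and establishing the transversality $\Ker(I-C_t')\cap(I-C_t')(X'_\beta)=\{0\}$. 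Your annihilator argument for the latter is sound; in fact the coincidence $(I-C_t')(X'_\beta)=(\Ker(I-C_t))^\perp$ is not even needed, since the inclusion $(I-C_t')(X'_\beta)\su(\Ker(I-C_t))^\perp$ together with $\Ker(I-C_t')=((I-C_t)(X))^\perp$ and $X=\Ker(I-C_t)+(I-C_t)(X)$ already gives $\Ker(I-C_t')\cap(I-C_t')(X'_\beta)\su X^\perp=\{0\}$. What your route buys is a self-contained argument that also yields the stronger conclusion $(C_t')^n\to P$ in $\cL_b(X'_\beta)$ (convergence of the iterates themselves to the spectral projection), at the cost of more verification; the paper's route is shorter but leans on two external duality results. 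For non-supercyclicity you apply \cite[Theorem 9.10.2(2)]{Ed} to the compact operator $C_t'$ uniformly in all cases, which is exactly how the paper handles $X\in\{\ell^1,d_1\}$; the paper treats the reflexive cases separately via $(C_t')'=C_t$, but your uniform treatment is equally valid.
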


\begin{proof} By Theorem \ref{Ct-PM} the operator $C_t\in \cL(X)$ is power bounded. Since  $(C_t')^n=(C_t^n)'$, for every $n\in\N_0$,  it follows from \cite[\S 39.3(6)]{24} that also $C'_t\in \cL(X'_\beta)$ is power bounded. The operator $C_t\in \cL(X)$ is also uniformly mean ergodic in $X$, again by Theorem \ref{Ct-PM}. Since $X$ is barrelled (hence, quasi-barrelled), Lemma 2.1 in \cite{ABR} implies that $C'_t$ is uniformly mean ergodic in $X'_\beta$. If $X\not\in\{\ell^1, d_1\}$, then $X'_\beta$ is reflexive with $(X'_\beta)'_\beta=X$ (cf. the proof of Proposition \ref{PP-dualOperator}) and hence, $(C'_t)'=C_t$. It follows  from \eqref{eq.XX-s}  that $C''_t=C_t$ has  plenty of eigenvalues so that $C_t'\in \cL(X'_\beta)$ cannot be supercyclic \cite[Proposition 1.26]{B-M}. Finally, suppose that $X\in\{\ell^1, d_1\}$. Since $C_t$ is compact with $\sigma_{pt}(C_t; X)=\Lambda$ (cf. Proposition \ref{Sp-spazilp} and Remark \ref{Nuova R}), it follows that $\sigma_{pt}(C'_t;X'_\beta)=\sigma_{pt}(C_t; X)=\Lambda$; see \cite[Proposition 9.10.2(2)]{Ed}. Schauder's theorem implies that $C'_t\in \cL(X'_\beta)$ is also compact and hence, again by Proposition 9.10.2(2) of \cite{Ed}, now applied to $C'_t$, we can conclude that $\sigma_{pt}(C''_t;X''_\beta)=\sigma_{pt}(C'_t;X'_\beta)=\Lambda$. So, $C''_t\in \cL(X''_\beta)$ has  plenty of eigenvalues which implies that $C'_t$ is not supercyclic. 
	\end{proof}

\begin{remark} \rm
	The dynamics of  $C_1\in \cL(X)$, with $X\not\in \{\ell^1,d_1\}$  belonging to one of the sets in Theorem \ref{Ct-PM}, is quite different. Consider first the Banach space case. For $1<p<\infty$, the operator   $C_1\in \cL(\ell^p)$ is neither power bounded nor mean ergodic, \cite[Proposition 4.2]{ABR00}. Since $\{z\in \C\,:\, |z-\frac{p'}{2}|<\frac{p'}{2}\}\subseteq \sigma_{pt}(C_1'; \ell^{p'})$ with $\frac{1}{p}+\frac{1}{p'}=1$,  \cite[Theorem 1(b)]{Le}, $C_1\in \cL(\ell^p)$ cannot be supercyclic, \cite[Proposition 1.26]{B-M}. Similarly,  $C_1\in \cL(ces(p))$, for  $1<p<\infty$,  is not mean ergodic, not power bounded and not supercyclic, \cite[Proposition 3.7(ii)]{ABR6}. Also, $C_1\in \cL(d_p)$ is not mean ergodic and not supercyclic, \cite[Propositions 3.10 \& 3.11]{BR1}. Since power bounded operators in reflexive Banach spaces are necessarily mean ergodic, \cite{Lor}, $C_1$ cannot  be power bounded in $d_p$. Turning to Fr\'echet spaces, for $1\leq p<\infty$ the operator $C_1\in \cL(\ell(p+))$ is not mean
	ergodic, not power bounded and not supercyclic,  \cite[Theorems 2.3 \& 2.5]{ABR3}, as is the case for $C_1\in \cL(ces(p+))$, \cite[Proposition 5]{ABR8}, and for  $C_1\in \cL(d(p+))$,  \cite[Proposition 3.5]{BR3}. For (LB)-spaces, with $1< p\leq\infty$,  the operator $C_1\in \cL(ces(p-))$ is not mean
	ergodic, not power bounded and not supercyclic,  \cite[Propositions 3.4 \& 3.5]{ABR4}, as is the case for $C_1\in \cL(d(p-))$,  \cite[Proposition 3.8]{BR3}.
Finally, the dynamics of $C_1\in \cL(\omega)$ is the same as for $C_t\in \cL(\omega)$, with $t\in [0,1)$; see Theorem \ref{Dyn-omega} above and \cite[Proposition 4.3]{ABR3}.
\end{remark}

The dynamics of $C_1$ acting in $\ell(p-)$ is covered by our final result.

\begin{prop} Let $p\in (1,\infty]$. The Cesàro operator $C_1\in \cL(\ell(p-))$ is  not mean
	ergodic, not power bounded and not supercyclic.
	\end{prop}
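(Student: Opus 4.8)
The plan is to transfer the known bad dynamical behaviour of $C_1$ from the Banach spaces $\ell^{p_k}$ (or directly from the structure of $\ell(p-)$) to the (LB)-space $\ell(p-)=\ind_k\ell^{p_k}$. First I would handle \emph{non-supercyclicity}: by Proposition \ref{Sp-lp-}(i) we have $\{z\in\C:|z-\frac{p'}{2}|<\frac{p'}{2}\}\subseteq\sigma_{pt}(C_1';\ell(p'+))$, and since $\ell(p'+)=(\ell(p-))'_\beta$ (recalled just before Proposition \ref{Sp-lp-}), the dual operator $C_1'\in\cL((\ell(p-))'_\beta)$ possesses an uncountable set of eigenvalues. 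By \cite[Proposition 1.26]{B-M} an operator whose adjoint has an eigenvalue cannot be supercyclic, so $C_1\in\cL(\ell(p-))$ is not supercyclic.

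Next I would establish that $C_1$ is \emph{not power bounded}. Here the key observation is that $C_1$ is not surjective on $\ell(p-)$ — this was already shown in the proof of Proposition \ref{Sp-lp-}(ii), where the alternating sequence $((-1)^n)_{n\in\N_0}$ was exhibited as the unique $\omega$-preimage under $C_1$ of an element of $\ell^{p_1}\subseteq\ell(p-)$, yet it does not lie in any $\ell^{p_k}$. Consequently $0\in\sigma(C_1;\ell(p-))$ and $\sigma(C_1;\ell(p-))$ contains the open disc $B(\frac{p'}{2},\frac{p'}{2})$ by Proposition \ref{Sp-lp-}(ii). Since this disc has points of modulus greater than $1$ (indeed $\sigma(C_1;\ell(p-))\not\subseteq\overline{B(0,1)}$), the Remark following Theorem \ref{OP} — namely that a power bounded operator $T$ on a sequentially complete (hence locally complete) lcHs satisfies $\sigma(T;X)\subseteq\overline{B(0,1)}$, via \cite[Proposition 5.1 \& Remark 5.3]{A-M} — forces $C_1$ not to be power bounded, once we note that $\ell(p-)$ is complete, hence sequentially complete, by Lemma \ref{Reg-LB}.

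Finally, \emph{non-mean-ergodicity}. The cleanest route is again spectral: if $C_1$ were mean ergodic then $\frac{C_1^n}{n}\to0$ in $\cL_s(\ell(p-))$, and by the same cited result \cite[Proposition 5.1 \& Remark 5.3]{A-M} (or \cite[Proposition 4.4]{FGJ}) this again yields $\sigma(C_1;\ell(p-))\subseteq\overline{B(0,1)}$, contradicting Proposition \ref{Sp-lp-}(ii) since $\frac{p'}{2}+\frac{p'}{2}=p'>1$ when $p<\infty$; for $p=\infty$ one has $p'=1$ so one instead uses that $1\in B(\frac{p'}{2},\frac{p'}{2})$ fails but $0\in\sigma(C_1;\ell(p-))$ together with the fact that the disc $B(\frac12,\frac12)$ still contains points arbitrarily close to $1$ on the boundary — actually for $p=\infty$ I would argue directly that $C_1$ is not mean ergodic by transferring from $\ell^{p_k}$: if $(\,(C_1)_{[n]}x\,)_n$ converged in $\ell(p-)$ for each $x$, then by regularity of the (LB)-space and boundedness of the sequence $\{(C_1)_{[n]}x\}$ it would be bounded, hence contained and convergent in some $\ell^{p_m}$, contradicting that $C_1$ is not mean ergodic on $\ell^{p_m}$ (\cite[Proposition 4.2]{ABR00}). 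I expect the main obstacle to be the endpoint $p=\infty$, where $p'=1$ and the spectral disc degenerates: there the argument must lean on the regularity of $\ell(p-)$ to pull a bounded orbit or Cesàro sequence down into a single Banach step $\ell^{p_m}$ and invoke the Banach-space results \cite[Proposition 4.2]{ABR00} and \cite[Theorem 1(b)]{Le}, rather than on the shape of the spectrum.
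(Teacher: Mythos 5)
Your treatment of non-supercyclicity coincides with the paper's: both pass to $C_1'\in\cL(\ell(p'+))=\cL((\ell(p-))'_\beta)$ via Proposition \ref{Sp-lp-}(i) and invoke \cite[Proposition 1.26]{B-M}. (One small caveat: that criterion says the adjoint of a supercyclic operator has \emph{at most one} eigenvalue, not that it has none; since you exhibit a whole open disc of eigenvalues, your conclusion is still safe.) For $p<\infty$ your route to non-power-boundedness and non-mean-ergodicity is correct but genuinely different from the paper's: you use the containment $B(\frac{p'}{2},\frac{p'}{2})\su\sigma(C_1;\ell(p-))$ from Proposition \ref{Sp-lp-}(ii) together with the principle, recorded in the remark following Theorem \ref{OP} via \cite{A-M}, that $\frac{T^n}{n}\to 0$ in $\cL_s(X)$ forces $\sigma(T;X)\su\ov{B(0,1)}$; since $p'>1$ the disc contains points of modulus exceeding $1$, and both properties fail at once. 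The paper instead works with a single eigenvalue: it takes the eigenvalue $\frac{1+p'}{2}$ of $C_1'$ on $\ell(p'+)$ with eigenvector $u$, chooses $x\in\ell(p-)$ with $\langle x,u\rangle\neq 0$, and computes $\langle \frac{1}{n}C_1^nx,u\rangle=\frac{1}{n}(\frac{1+p'}{2})^n\langle x,u\rangle$, which is unbounded; this gives an explicit vector with unbounded normalized orbit and avoids the external spectral-radius result. Both arguments are sound for $p\in(1,\infty)$.

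The genuine problem is the endpoint $p=\infty$, which you rightly sense is delicate but do not actually close. There $p'=1$, the disc $\ov{B(\frac{1}{2},\frac{1}{2})}$ sits inside $\ov{B(0,1)}$, and the spectral-radius criterion yields nothing, as you observe. Your fallback transfer argument, however, has a quantifier gap: the failure of mean ergodicity (or power boundedness) of $C_1$ on a step $\ell^{p_m}$ produces a witness $y\in\ell^{p_m}$ whose Ces\`aro means diverge \emph{in the norm of} $\ell^{p_m}$, but nothing prevents those means from converging, or the orbit from being bounded, in the larger step $\ell^{p_{m+1}}\su\ell(\infty-)$; so pulling a putative convergent or bounded sequence down into some step by regularity (or sequential retractivity) does not meet the Banach-space results of \cite{ABR00} head on, and no contradiction is reached. (To be fair, the paper's own proof is also not watertight at this endpoint: it asserts $\frac{1+p'}{2}>1$, which is false when $p'=1$, and $1$ is only a boundary point of $B(\frac{1}{2},\frac{1}{2})$.) A correct repair is to fix one vector witnessing divergence in \emph{every} step: take $x=e_0$ and, for each $k$, the eigenvector $u_k\in\ell^{p_k'}$ of $C_1'$ for the eigenvalue $\frac{1+p_k'}{2}>1$ (available since $p_k<\infty$, so $p_k'>1$); by \cite[Theorem 1(b)]{Le} one may take $(u_k)_0\neq 0$, whence $\|\frac{1}{n}C_1^ne_0\|_{\ell^{p_k}}\geq |\langle \frac{1}{n}C_1^ne_0,u_k\rangle|/\|u_k\|_{\ell^{p_k'}}\to\infty$ for every $k$, and regularity of $\ell(\infty-)$ (Lemma \ref{Reg-LB}) shows $\{\frac{1}{n}C_1^ne_0\}_{n\in\N}$ is unbounded in $\ell(\infty-)$, disposing of both power boundedness and mean ergodicity at once.
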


\begin{proof} In view of Proposition \ref{Sp-lp-}(i) the proof follows in a similar way to that  of \cite[Theorem 2.3]{ABR3}. For the sake of completeness, we indicate the details.
	
	By the discussion prior to Proposition \ref{PP-dualOperator} we know that $(\ell(p-))'_\beta\simeq \ell(p'+)$. Proposition \ref{Sp-lp-}(i) implies that    $\frac{1+p'}{2}>1$ belongs to $\sigma_{pt}(C_1';\ell(p'+))$, where $\frac{1}{p}+\frac{1}{p'}=1$.  So, 
	there exists a non-zero vector $u\in \ell(p'+)$ satisfying $C_1'(u) =\frac{1+p'}{2}u$. Choose any $x\in \ell(p-)$
	such that $\langle x,u\rangle\not=0$. Then
\[
\langle  \frac{1}{n}(C_1)^n(x),u\rangle=\langle  x, \frac{1}{n}(C'_1)^n(u)\rangle=\frac{1}{n}\left(\frac{1+p'}{2}\right)^n\langle x,u\rangle,\quad n\in\N.
\]
	This means that the sequence $\{\frac{1}{n}(C_1)^n(x)\}_{n\in\N}\su \ell(p-)$ cannot be bounded in  $\ell(p-)$. Accordingly, $C_1$ is not mean ergodic and not power bounded.
	
Applying again Proposition \ref{Sp-lp-}(i), we see that  $C'_1$ has a plenty of eigenvalues. So, $C_1$ cannot be supercyclic,  \cite[Proposition 1.26]{B-M}.
\end{proof}

\bigskip

\textbf{Acknowledgements.} The research of J. Bonet was partially supported by the project 
PID2020-119457GB-100 funded by MCIN/AEI/10.13039/501100011033 and by 
``ERFD A way of making Europe'' and by the project GV AICO/2021/170.

\bibliographystyle{plain}

\end{document}